\renewcommand{\theenumi}{\alph{enumi}}
\newtheorem{lemma}{Lemma}[section]
\newtheorem{theorem}[lemma]{Theorem}
\newtheorem{corollary}[lemma]{Corollary}
\newtheorem{proposition}[lemma]{Proposition}
\theoremstyle{remark}
\newtheorem{remark}[lemma]{Remark}
\newtheorem{example}[lemma]{Example}
\newtheorem{conjecture}[lemma]{Conjecture}
\theoremstyle{definition}
\newtheorem{definition}[lemma]{Definition}
\newcounter{Lcounter}
\renewcommand{\theLcounter}{\theequation\alph{Lcounter}}
\newcounter{condcounter}
\newcounter{NumberEquation}
\renewcommand{\theequation}{\arabic{equation}}}
\def\wre{\mathop{\rm wr}\nolimits}
\renewcommand{\wr}{\wre}
\renewcommand{\phi}{\varphi}
\newcommand{\Ind}{{\rm Ind}}
\def\st{\mathop{\mid\;}}
\newcommand{\Ram}{{\rm Ram}}
\newcommand{\gal}{\textnormal{Gal}}
\newcommand{\Gal}{\textnormal{Gal}}
\newcommand{\eps}{\varepsilon}
\newcommand{\hefresh}{\smallsetminus}
\newcommand{\fsEP}[6]{\xymatrix{& #1\ar[d]^{#2}\ar@{.>}[ld]_{#6}\\ #3\ar[r]^{#4} & #5}}
\newcommand{\sEP}[4]{\xymatrix{& #1\ar[d]\ar@{.>}[dl]_{#4}\\#2\ar[r] & #3}}
\def\dotunion{
\def\dotunionD{\bigcup\kern-9pt\cdot\kern5pt}
\def\dotunionT{\bigcup\kern-7.5pt\cdot\kern3.5pt}
\mathop{\mathchoice{\dotunionD}{\dotunionT}{}{}}}
\def\normal{\triangleleft}
\def\and{{\rm and}}
\def\gcd{{\rm gcd}}
\def\Hom{{\rm Hom}}
\def\id{{\rm id}}
\def\rank{\mathop{\rm rank}\nolimits}
\def\res{{\rm res}}
\newcommand{\calE}{{\mathcal E}}
\newcommand{\calF}{{\mathcal F}}
       \newcommand{\Egal}{{\tilde E}}
       \newcommand{\Kgal}{{\tilde K}}
       \newcommand{\Lgal}{{\tilde L}}
\newcommand{\gag}[1]{\bar{#1}}
\def\agag{{\bar a}}     
\def\bgag{{\bar b}}
     \def\Egag{{\gag{E}}}
     \def\Fgag{{\gag{F}}}
     \def\Ngag{{\gag{N}}}
     \def\Rgag{{\gag{R}}}
     \def\Sgag{{\gag{S}}}
\def\tgag{{\bar t}}     
\def\vgag{{\bar v}}     
\def\xgag{{\bar x}}     
\def\ygag{{\bar y}}
\def\betagag{{\bar \beta}}
\def\gammagag{{\bar \gamma}}
\def\mugag{{\bar \mu}}
     \def\Ehat{{\hat E}}
     \def\Fhat{{\hat F}}
\def\ghat{{\hat g}}     \def\Ghat{{\hat G}}
\def\alphahat{{\hat \alpha}}
\def\phihat{{\hat \varphi}}
\def\psihat{{\hat \psi}}
\def\muhat{{\hat \mu}}
\def\thetahat{{\hat \theta}}
\newcommand{\bbC}{\mathbb{C}}
\newcommand{\bbF}{\mathbb{F}}
\newcommand{\bbN}{\mathbb{N}}
\newcommand{\bbR}{\mathbb{R}}
\newcommand{\bbZ}{\mathbb{Z}}
\newcommand{\bfx}{\mathbf x}
\newcommand{\pfrak}{{\mathfrak p}}       \newcommand{\Pfrak}{{\mathfrak P}}
         \newcommand{\frP}{{\mathfrak P}}
\def\alp{\alpha}
\def\del{\delta}        \def\Del{\Delta}
\def\eps{\varepsilon}
\def\phi{\varphi}
\newcommand{\Quot}{{\rm Quot}}
\begin{document}

\title{Fully Hilbertian Fields}%
\author{Lior Bary-Soroker}%
\author{Elad Paran}
\address{Einstein Institute of Mathematics Edmond J. Safra Campus, Givat Ram, The Hebrew University of Jerusalem, Jerusalem, 91904, Israel}%
\email{barylior@math.huji.ac.il}%
\address{School of Mathematical Sciences, Tel Aviv University, Ramat Aviv, Tel Aviv, 69978, Israel}
\email{paranela@post.tau.ac.il}%

\date{\today}%
\begin{abstract}

We introduce the notion of fully Hilbertian fields, a strictly stronger notion than that of Hilbertian fields. We show that this class of fields exhibits the same good behavior as Hilbertian fields, but for fields of uncountable cardinality, is more natural than the notion of Hilbertian fields. In particular, we show it can be used to achieve stronger Galois theoretic results. Our proofs also provide a step toward the so-called Jarden-Lubotzky twinning principle.
\end{abstract}
\maketitle

\section{Introduction}
A field $K$ is called Hilbertian if it satisfies the following
property: for every irreducible polynomial $f(X,Y)\in K[X,Y]$
which is separable in $Y$, there exist infinitely many $a\in K$
for which $f(a,Y)$ is irreducible in $K[Y]$. The name Hilbertian
is derived from Hilbert's irreducibility theorem, which asserts
that number fields are Hilbertian.

Hilbert's original motivation for his irreducibility theorem was
the inverse Galois problem. Assume one wants to realize a finite group $G$ over a Hilbertian field $K$. Then given a regular Galois extension $F/K(x)$ with group $G$, one can use Hilbert's irreducibility theorem to specialize $x$ to $a\in K$ and obtain a Galois extension $\Fgag$ of $K$ with group $G$.  Moreover, there exist infinitely many such specialized fields, linearly disjoint over $K$, obtained by suitable choices of $a \in K$.
Even
nowadays Hilbert's irreducibility theorem remains a central
approach for the inverse Galois problem, see
\cite{MalleMatzat1999,Voelklein1996}. Moreover, this theorem has
numerous other applications in number theory, see e.g.\
\cite{Serre1989}.

It is yet unknown if the inverse Galois problem has a positive solution over a Hilbertian field $K$, or equivalently, if it has a positive solution over $K(x)$. If a field $K$ is ample (e.g.\ $K$ is algebraically closed, PAC, or Henselian) then a theorem of Pop asserts that the inverse Galois problem has a positive solution over $K(x)$. Thus the inverse Galois problem has a positive solution over ample Hilbertian fields.

The cardinality of the set of solution fields $\Fgag$ to each instance of the inverse Galois problem affects the structure of the absolute Galois group of the field $K$. Surprisingly, if $K$ is uncountable, it might happen that a regular Galois extension $F/K(x)$ has only countably many specialized solutions $\Fgag$ (see Example \ref{exa:jarden}). Moreover, in order to study the absolute Galois group, one needs to study not only the possible realizations of finite groups, but also how the obtained fields fit together. This is achieved using finite embedding problems, which give rise to non-regular extensions of $K(x)$.

More precisely, let $f(X,Y)$ be an irreducible polynomial that is separable in $Y$. Let $F=K(X)[Y]/(f(X,Y))$ and let $L$ be the algebraic closure of $K$ in $F$. Each $a$ in the corresponding Hilbert set $H(f)=\{a\in K \mid f(a,Y) \mbox{ irreducible}\}$  provides a specialized field $\Fgag_a$ that contains $L$ with degree $[\Fgag_a:K]=[F:K]$. If $F=L(X)$, then $L=\Fgag_a$ for all $a$, i.e., $L$ is the unique specialized field. In the non-trivial case, Hilbertianity implies the existence of infinitely many specialized fields that are linearly disjoint over $L$.  However, as mentioned above,  even if $K$ is uncountable it is possible that there exist only countably many such fields (and even though the cardinality of $H(f)$ always equals that of $K$).

In this work we study fields which have as much and as distinct as possible specialized fields $\Fgag_a$. That is to say, there exists a subset $A\subseteq H(f)$ of cardinality $|K|$  such that the specialized fields $\Fgag_a$ are  linearly disjoint over $L$ (where $a$ runs over $A$). We call a field with this feature \textbf{fully Hilbertian}. It is important to note that a \emph{countable} field $K$ is Hilbertian if and only if it is fully Hilbertian. In particular, number fields are fully Hilbertian.

The objective of this work is to initiate the study of fully Hilbertian fields. Therefore we give several equivalent definitions of fully Hilbertian fields, construct many fundamental examples of fully Hilbertian fields, and study the behavior of this notion under  extensions. We also show how one can apply this notion and obtain strong Galois theoretic results over fields of large cardinality. We note that as Hilbert's irreducibility theorem found many other applications outside the scope of Galois theory, this notion may be useful for other problems over uncountable fields.

\subsection{Characterizations of fully Hilbertian fields}
A recent characterization of Hilbertian fields by the first author \cite{Bary-Soroker2008IMRN} reduces the Hilbertianity property to absolutely irreducible polynomials. We show that a similar property holds for fully Hilbertian fields:

\begin{theorem}
The following are equivalent for a field $K$.
\begin{enumerate}
\item $K$ is fully Hilbertian.
\item For every absolutely irreducible $f(X,Y)\in K[X,Y]$ that is separable in $Y$ there exist $|K|$ many $a\in H(f)$ and $b_a$ a root of $f(a,Y)$ such that $K(b_a)$ are linearly disjoint over $K$.
\item For every finite \emph{Galois} extension $F/K(x)$ with $L$ the algebraic closure of $K$ in $F$, there exist $|K|$ many $a\in K$ such that $[\Fgag_a:K]=[F:K]$ and all $\Fgag_a$ are linearly disjoint over $L$.
\end{enumerate}
\end{theorem}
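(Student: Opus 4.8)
The plan is to prove the cycle of implications $(a)\Rightarrow(c)\Rightarrow(b)\Rightarrow(a)$. The implication $(a)\Rightarrow(c)$ is essentially a reformulation: given a finite Galois extension $F/K(x)$ with $L=\bar K\cap F$, pick a primitive element of $F/K(x)$ and clear denominators to get an irreducible $f(X,Y)\in K[X,Y]$, separable in $Y$, with $F=K(X)[Y]/(f)$ and $\deg_Y f=[F:K(x)]$; applying $(a)$ to $f$ and discarding the finitely many $a\in H(f)$ at which the $Y$-degree drops produces $|K|$ many $\Fgag_a=K[Y]/(f(a,Y))$ of degree $[F:K(x)]$ over $K$, containing $L$, and linearly disjoint over $L$. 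For $(c)\Rightarrow(b)$, let $f$ be absolutely irreducible and separable in $Y$ with $\deg_Y f\ge 2$; then $F_0:=K(X)[Y]/(f)$ is regular over $K$, hence (a standard fact) so is the Galois closure $F$ of $F_0/K(x)$, with $G:=\Gal(F/K(x))$ and $\bar K\cap F=K$. Applying $(c)$ to $F$ produces $|K|$ many $a$ with $[\Fgag_a:K]=|G|$ and the $\Fgag_a$ linearly disjoint over $K$; for such $a$ the decomposition group at a place over $x=a$ is all of $G$, which acts transitively on the $\deg_Y f$ roots of $f(x,Y)$, so $f(a,Y)$ is irreducible over $K$, and the subfields $K(b_a)$ generated by a root $b_a\in\Fgag_a$ are linearly disjoint over $K$, since subfields of a linearly disjoint family are linearly disjoint over the base.

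The substance of the theorem is $(b)\Rightarrow(a)$. First, $(b)$ clearly implies that every absolutely irreducible $f(X,Y)\in K[X,Y]$ separable in $Y$ with $\deg_Y f\ge 2$ admits a specialization $f(a,Y)$ that is irreducible (hence root-free over $K$), so by the characterization of Hilbertian fields in \cite{Bary-Soroker2008IMRN} the field $K$ is Hilbertian; in particular finite extensions of $K$ are Hilbertian and every $K$-irreducible polynomial separable in $Y$ has a Hilbert set of cardinality $|K|$. Now fix an irreducible $f(X,Y)\in K[X,Y]$, separable in $Y$, with $F=K(X)[Y]/(f)$ and $L=\bar K\cap F$. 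Since $F/L$ is regular there is an absolutely irreducible $g(X,Y)\in L[X,Y]$, separable in $Y$, with $F=L(X)[Y]/(g)$. Keeping track of the relevant Galois data, and using a field-theoretic lemma that pushes linear disjointness over an intermediate finite extension down to linear disjointness over $L$, one reduces $(a)$ for $f$ to the following: there are $|K|$ many $a\in H(f)$ such that $g(a,Y)$ is irreducible over $L$ and the fields $\Fgag_a=L(b_a)$ are linearly disjoint over $L$.

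To produce such $K$-rational specializations of the $L$-curve $g$, the idea is to \emph{spread the constant field $L$ into a parameter}. Write $L=K(c)$ with $c$ of minimal polynomial $p(S)\in K[S]$ of degree $m=[L:K]$, and $g=\gamma(c,X,Y)$ with $\gamma\in K[S,X,Y]$; the fibre of $\gamma(S,X,Y)=0$ over $S=c$ is geometrically irreducible, and after choosing the model appropriately $\gamma(S,X,Y)$ is absolutely irreducible over $K$ as a polynomial in three variables. Using the Hilbertianity of $K$ together with a Bertini--Noether argument to specialize the auxiliary parameter, one obtains an absolutely irreducible polynomial over $K$ in two variables, to which $(b)$ applies; combining its output with a restriction-of-scalars and norm argument (along $\mathrm{Res}_{L/K}$) that converts $K$-rational specializations and linear disjointness over $K$ into linear disjointness over the larger field $L$, together with a short pruning of the resulting family of cardinality $|K|$, yields $(a)$.

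The main obstacle lies entirely in this last step and will be caused by the constant field: when $L\ne K$ the fields $\Fgag_a$ intrinsically contain $L$, so they are \emph{not} the specialization fields of any absolutely irreducible polynomial over $K$, and, worse, linear disjointness over $K$ does not in general imply linear disjointness over $L$. The hard part will be to carry out the ``constant field to parameter'' construction precisely enough that $(b)$ can be applied, and so that it delivers disjointness over $L$ rather than merely over $K$; this is also the point where the argument makes contact with the Jarden--Lubotzky twinning principle alluded to in the abstract.
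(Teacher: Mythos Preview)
Your implications $(a)\Rightarrow(c)$ and $(c)\Rightarrow(b)$ are fine and close to the paper's treatment. The gap is exactly where you put it: your $(b)\Rightarrow(a)$ is a programme, not a proof. The ``spread $L$ into a parameter'' construction via $\gamma(S,X,Y)$ and restriction of scalars is left vague, and --- as you yourself say --- linear disjointness of the $K(b_a)$ over $K$ does not yield linear disjointness of the $L(b_a)$ over $L$. Nothing in the sketch bridges this: specializing $S$ to a $K$-point loses $L$ altogether, and no mechanism is offered for recovering the $L$-structure afterwards.

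The paper sidesteps the obstacle by reordering the cycle to $(a)\Rightarrow(b)\Rightarrow(c)\Rightarrow(a)$. The step $(c)\Rightarrow(a)$ then becomes a short degree count: pass to a Galois closure $F\supseteq E$ with $L=F\cap K_s\supseteq L_0$, take $|K|$ independent solutions of the Galois problem, and the equality $[\prod_{i\in J}\bar E_i:L_0]=[E:L_0]^{|J|}$ forces the $\bar E_i$ to be linearly disjoint over $L_0$. The real work is $(b)\Rightarrow(c)$, and the idea you are missing is to first reduce (via a fiber product, Lemma~\ref{lem:splitep}) to \emph{split} rational embedding problems. Once $\alpha\colon\Gal(F/K(x))\to\Gal(L/K)$ splits, the section cuts out $E\subseteq F$ with $E/K$ regular, $EL=F$, and $E\cap L=K$. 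Now $(b)$ applies directly to a generator of $E/K(x)$, giving $|K|$ fields $E_a$ linearly disjoint over $K$; a Zorn-type maximality argument (the compositum $N$ of solutions already found has fewer than $|K|$ finite subextensions, so some $E_a$ satisfies $E_a\cap N=K$, whence $\bar F_a=E_aL$ is linearly disjoint from $N$ over $L$) then shows any family of independent solutions of size $<|K|$ can be enlarged. The splitting hands you, for free, a regular complement of $L$ inside $F$ --- precisely the structure your restriction-of-scalars sketch was groping for.
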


\subsection{Finitely generated field extensions}
The most fundamental family of Hilbertian fields is the family of number fields. As mentioned above, since
countable Hilbertian fields are fully Hilbertian, number fields are
fully Hilbertian. The second important family is that of function fields:

\begin{theorem}
\label{IT:function_field}
Let $F$ be a finitely generated transcendental extension of an arbitrary field $K$. Then $F$ is fully Hilbertian.
\end{theorem}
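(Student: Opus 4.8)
The plan is to combine a field-theoretic reduction with one core case. If $F/K$ is finitely generated and transcendental, fix a transcendence basis $t_1,\dots,t_n$ of $F/K$ (so $n\geq 1$); then $F$ is a finite extension of $L:=K(t_1,\dots,t_n)=K(t_1,\dots,t_{n-1})(t_n)$. It therefore suffices to prove (i) $E(t)$ is fully Hilbertian for \emph{every} field $E$, and (ii) every finite extension of a fully Hilbertian field is fully Hilbertian: applying (i) with $E=K(t_1,\dots,t_{n-1})$ shows $L$ is fully Hilbertian, and (ii) then gives it for $F$. I would take (ii) from the paper's analysis of the behavior of full Hilbertianity under finite extensions — including the purely inseparable case, which is genuinely needed since $F/L$ need not be separable when $K$ is imperfect.

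The content is in (i). If $E$ is finite, then $E(t)$ is countable and Hilbertian by the classical Hilbert irreducibility theorem for rational function fields, hence fully Hilbertian since countable Hilbertian fields are fully Hilbertian; so assume $E$ infinite, whence $|E(t)|=|E|$. By the characterization theorem it is enough, given an absolutely irreducible $f(X,Y)\in E(t)[X,Y]$ separable in $Y$, to produce $|E|$ many $a\in H(f)$ together with a root $b_a$ of $f(a,Y)$ so that the fields $E(t)(b_a)$ are linearly disjoint over $E(t)$; after clearing denominators we may take $f$ primitive in $E[t][X,Y]$ of degree $d:=\deg_Y f\geq 2$ in $Y$ (the case $d=1$ being trivial). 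I would construct the set $A$ by transfinite recursion of length $|E|$, adding at each stage one specialization whose splitting field over $E(t)$ is \emph{geometric} (meeting $\overline E$ in $E$) and ramifies over only finitely many places of $E(t)$, chosen disjoint from all finite sets used at earlier stages.

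Concretely, at stage $\alpha<|E|$ suppose we have $a_\beta\in H(f)$ with roots $b_\beta$ ($\beta<\alpha$) such that the splitting fields $\widetilde M_\beta$ of $f(a_\beta,Y)$ over $E(t)$ are geometric Galois extensions with pairwise disjoint finite ramification loci $S_\beta\subset\bbP^1_E$. Set $N_{<\alpha}:=\prod_{\beta<\alpha}\widetilde M_\beta$, a Galois extension of $E(t)$ ramified only over the set $\bigcup_{\beta<\alpha}S_\beta$ of fewer than $|E|$ places. I would then feed $f$ into the classical mechanism behind the Hilbertianity of $E(t)$ — specialization of $X$ to an element of $E(t)$ that, through a one-slope Newton polygon at a place which one is free to choose among the degree-one places of $E(t)$, makes $f(a_\alpha,Y)$ irreducible over $E(t)$ — arranged so that in addition the splitting field $\widetilde M_\alpha$ is geometric and ramifies over a finite set $S_\alpha$ disjoint from $\bigcup_{\beta<\alpha}S_\beta$ (possible because fewer than $|E|$ places are forbidden and $E$ is infinite). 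Then $\widetilde M_\alpha\cap N_{<\alpha}$ is a Galois extension of $E(t)$ unramified over every point of $\bbP^1_E$; since $\bbP^1_{\overline E}$ is simply connected it equals a constant-field extension $E'(t)$ with $E'/E$ finite, and $E'(t)\subseteq\widetilde M_\alpha$ geometric forces $E'=E$, so $\widetilde M_\alpha\cap N_{<\alpha}=E(t)$. Two Galois extensions of $E(t)$ with trivial intersection are linearly disjoint, hence so are the subfields $E(t)(b_\alpha)$ and $\prod_{\beta<\alpha}E(t)(b_\beta)$; since $d\geq 2$ the new $a_\alpha$ differs from all $a_\beta$. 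Running through all $\alpha<|E|$ produces $A$ with the $E(t)(b_a)$ linearly disjoint over $E(t)$.

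The crux, and the step I expect to be the main obstacle, is the refinement of the classical Hilbertianity argument for $E(t)$ used in the recursion: the classical statement only needs one specialized polynomial to become irreducible, while here the specialization must be \emph{generic} relative to the tower built so far, packaged as ``the splitting field is geometric and its ramification locus is a finite set of places that can be placed disjointly from a prescribed set of fewer than $|E|$ places''. Showing that the Newton-polygon / Eisenstein-type construction can be steered to meet irreducibility together with these extra constraints — that demanding a geometric splitting field and controlled, freely placeable ramification excludes only a thin set of specializations, so that solutions survive the removal of the forbidden places — is where the real work lies; granting it, the cardinal bookkeeping and the reductions in (i)--(ii) are routine.
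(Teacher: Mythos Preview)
Your overall plan—reduce to $E(t)$ and then use preservation under finite extensions—matches the paper exactly; the divergence is entirely in the core case (i).

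The paper's argument for $E(t)$ (with $|E|=m$ uncountable) is much more direct and bypasses your crux. It does not use the absolutely-irreducible characterization, transfinite recursion, or ramification arguments. Instead it invokes Lemma~\ref{lem:<m_ep}: it suffices to geometrically solve every rational embedding problem $(\nu,\alpha)$ for $E(t)$ with $\rank\gal(F/E(t)(x))<m$ and finite kernel. For such a problem there are fewer than $m$ proper finite subextensions $E_i$ of $F/E(t)(x)$. Each carries a Hilbert set containing $\{a+bt : g_i(a,b)\neq 0\}$ for some nonzero $g_i\in E[A,B]$, by the standard parametric description of Hilbert sets over a rational function field. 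Since fewer than $m$ plane curves cannot cover $E^2$, one picks $(a,b)$ off all of them; the specialization $x\mapsto a+bt$ is then unramified in every $E_i$ and of full residue degree there, so the decomposition field contains no proper $E_i$ and $\phi^*$ is surjective. One solution per small-rank problem is all that is needed—the independence of the resulting family is already packaged into Lemma~\ref{lem:<m_ep}.

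Your alternative via disjoint ramification loci and simple-connectedness of $\bbP^1$ is an interesting idea, but the crux you flag is a genuine gap, and the ``geometric splitting field'' requirement is the weak point. A Newton-polygon or Eisenstein argument gives irreducibility of $f(a_\alpha,Y)$ but says nothing about the constant field of its Galois closure: for $f=Y^3-X$ over $\bbQ(t)$, \emph{every} irreducible specialization has $\sqrt{-3}$ in its splitting field (from the discriminant), so $\widetilde M_\alpha$ is never geometric over $\bbQ$ and your inference $E'=E$ fails outright. To repair this you would have to either pass to the Galois-fully-Hilbertian formulation and control the constant field of each $\widetilde M_\alpha$ relative to that of $N_{<\alpha}$ along the recursion, or abandon the splitting fields and argue linear disjointness of the $E(t)(b_a)$ by some other mechanism—either way a redesign, not a refinement. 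The paper's route avoids all of this by trading the explicit recursion for the reduction lemma and the parametric shape of Hilbert sets in $E(t)$.
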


\subsection{Extensions of fully Hilbertian fields}
Hilbertian fields have interesting behavior under algebraic extensions which has been well studied; \cite[Chapter 13]{FriedJarden2005} offers a good treatment of this subject.
We  show that
fully Hilbertian fields exhibit the same behavior.

Any algebraic extension $L/K$ factors into a tower of fields $K \subseteq E \subseteq L$ in which $E/K$ is purely inseparable and $L/E$ is separable.
We study each case separately. First, full Hilbertianity is preserved under
purely inseparable extensions:
\begin{theorem}
\label{IT:purely_iseparable}
Let $E$ be a purely inseparable extension of a fully Hilbertian field $K$. Then $E$ is fully Hilbertian.
\end{theorem}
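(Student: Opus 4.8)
The plan is to reduce full Hilbertianity of a purely inseparable extension $E/K$ to full Hilbertianity of $K$ itself, exploiting the fact that a purely inseparable extension is invisible to separable Galois theory. Concretely, suppose $\operatorname{char} K = p > 0$ and $E = K(\alpha)$ with $\alpha^{p^n} \in K$ (the general case following by a straightforward transfinite induction on the tower of simple purely inseparable steps, or by noting $E = \bigcup E_i$ over finite subextensions, since the property in characterization (b) or (c) is about polynomials of bounded degree and only finitely many coefficients are involved at a time). By Theorem~1.3's characterization, it suffices to produce, for a given finite Galois extension $\mathcal F/E(x)$ with $M$ the algebraic closure of $E$ in $\mathcal F$, a set of $|E| = |K|$ specializations $a \in E$ with $[\bar{\mathcal F}_a : E] = [\mathcal F : E]$ and the $\bar{\mathcal F}_a$ linearly disjoint over $M$.

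The key step is a descent of the data from $E$ to $K$. Because $E/K$ is purely inseparable, $M/E$ is separable, and the compositum $MK^{\mathrm{sep}} = ME^{\mathrm{sep}}$, one expects that $\mathcal F$ and $M$ are defined over $K$ up to a purely inseparable base change: there should be a finite Galois extension $F/K(x)$ and its constant field $L$ (algebraic closure of $K$ in $F$) with $\mathcal F = FE$, $M = LE$, $[\mathcal F:E(x)] = [F:K(x)]$, and $\operatorname{Gal}(\mathcal F/E(x)) \cong \operatorname{Gal}(F/K(x))$. The mechanism: raising coefficients to a suitable $p^m$-th power clears all the inseparable denominators, and since $x$ is transcendental we may adjust by a substitution $x \mapsto x^{p^m}$ — or more cleanly, observe that $E(x)^{\mathrm{sep}} = K(x)^{\mathrm{sep}} \cdot E$ as $E/K$ is purely inseparable, so every finite separable extension of $E(x)$, in particular the Galois extension $\mathcal F$, is the base change to $E(x)$ of a unique finite separable extension $F$ of $K(x)$, and the Galois correspondence is preserved. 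One must check that $F/K(x)$ is Galois (it is, since $\operatorname{Gal}(K(x)^{\mathrm{sep}}/K(x)) \to \operatorname{Gal}(E(x)^{\mathrm{sep}}/E(x))$ is an isomorphism) and that $L = $ algebraic closure of $K$ in $F$ satisfies $LE = M$.

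Now apply full Hilbertianity of $K$ to $F/K(x)$: obtain $A \subseteq K$ with $|A| = |K|$, $[\bar F_a : K] = [F:K(x)]$, and $\{\bar F_a\}_{a \in A}$ linearly disjoint over $L$. The final step is to transport these specializations: for $a \in A$ (viewed in $E$), the specialization $\bar{\mathcal F}_a$ of $\mathcal F$ at $a$ should equal $\bar F_a \cdot E$, with $[\bar{\mathcal F}_a : E] = [\bar F_a : K] = [\mathcal F : E(x)]$ provided the specialization is unramified/good, which holds for all but finitely many — hence all but a subset of $A$ of smaller cardinality than $|K|$ — of the $a$ (here $|K|$ infinite is what makes the notion nontrivial and is implicitly the case of interest; if $K$ is finite it is not Hilbertian at all). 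Linear disjointness of $\{\bar F_a \cdot E\}$ over $LE = M$ follows from linear disjointness of $\{\bar F_a\}$ over $L$ together with the fact that each $\bar F_a / L$ is separable while $E/K$, hence $M/L$, is purely inseparable, so tensoring with $E$ over $K$ (equivalently with $M$ over $L$) preserves the field-and-linear-disjointness structure.

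The main obstacle I anticipate is the descent in the middle paragraph: making precise that "$\mathcal F$ is defined over $K(x)$" with the constant field behaving correctly, i.e.\ that $L E = M$ and not merely $L E \subseteq M$. This requires knowing that the algebraic closure of $E$ in $FE$ is exactly $(\text{algebraic closure of } K \text{ in } F) \cdot E$, which should follow from $FE/F$ being purely inseparable and $F/K(x)$ having constant field $L$, but the interaction of "constant field" with a purely inseparable base change needs a careful lemma (the subtlety is that purely inseparable extensions can enlarge constant fields, but here the enlargement is exactly by $E$). The specialization-theoretic bookkeeping (good reduction away from finitely many places, $\bar{\mathcal F}_a = \bar F_a E$) is routine given the standard theory of specializations of function field extensions as in \cite[Chapter 13]{FriedJarden2005}.
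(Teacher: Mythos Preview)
Your approach is essentially the paper's (Proposition~\ref{prop:p.i-ful-hil}): descend the rational embedding problem from $E$ to $K$ via the pure inseparability of $E/K$, solve over $K$, and base-change the solutions back to $E$. The paper's descent resolves your anticipated obstacle cleanly and works directly for arbitrary (not just finite) $E/K$: since $P/E(x)$ is Galois and $E(x)/K(x)$ is purely inseparable, $P/K(x)$ is \emph{normal}, so taking $F$ (resp.\ $L$) to be the maximal separable subextension of $P/K(x)$ (resp.\ of $M/K$) automatically yields $P=FE$, $M=LE$, $\gal(F/K(x))\cong\gal(P/E(x))$, and---after a short linear-disjointness check---regularity of $F/K$, so no reduction to simple extensions or bookkeeping about constant fields is needed.
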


The case of separable extensions is much more interesting. First,
it is clear that not every extension of a fully Hilbertian (or
Hilbertian) field is Hilbertian. For example, a separably closed
field is not Hilbertian, and hence not fully Hilbertian.
The most general result for Hilbertian fields is Haran's diamond theorem \cite{Haran1999Invent}. We prove an analog of the diamond theorem, and all other permanence criteria for fully Hilbertian fields.

\begin{theorem}
\label{IT:separable}
Let $M$ be a separable extension of a fully Hilbertian field $K$. Then each of the following conditions suffices for $M$ to be fully Hilbertian.
\begin{enumerate}
\item $M/K$ is finite.
\item $M$ is an abelian extension of $K$.
\item
\label{case:weissauer}
 $M$ is a proper finite extension of a Galois extension $N$ of $K$.
\item (The diamond theorem) there exist Galois extensions $M_{1},
M_{2}$ of $K$ such that $M\subseteq M_{1}M_{2}$, but
$M\not\subseteq M_{i}$ for $i=1,2$.
\end{enumerate}
\end{theorem}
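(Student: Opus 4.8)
The strategy is to reduce each assertion to the corresponding "twinning" statement for Hilbertian fields, whose proofs are known, by replacing the classical Hilbertianity input at each step with the full-Hilbertianity machinery developed earlier in the paper (in particular the characterization in terms of absolutely irreducible polynomials and the equivalent formulation via Galois covers of $K(x)$). Throughout, the basic object to control is: given a finite Galois extension $F/M(x)$ with $L$ the algebraic closure of $M$ in $F$, produce $|M|$ many specializations $a \in M$ with $[\Fgag_a : M] = [F:M]$ and the $\Fgag_a$ linearly disjoint over $L$. Since $|M| = |K|$ whenever $M/K$ is algebraic and $K$ is infinite (a fully Hilbertian field is infinite), cardinalities cause no trouble, and the entire content is the linear disjointness of an $|K|$-indexed family.

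\textbf{Steps (a) and (b).} For $M/K$ finite separable: embed $M$ in its Galois closure and first treat the case $M/K$ Galois by a direct restriction-of-scalars / descent argument — given a Galois cover of $M(x)$, one descends an associated cover to $K(x)$, applies full Hilbertianity of $K$ to get $|K|$ linearly disjoint specialized fields over $K$, and checks that the corresponding specializations over $M$ remain linearly disjoint over the relevant constant field $L$. For the abelian case (b) one uses Kuyk's theorem in the fully Hilbertian setting: an abelian extension of $K$ is contained in a tower built from finite abelian (hence finite separable, handled by (a)) pieces, together with the permanence of full Hilbertianity under increasing unions of such extensions, which must be verified directly from the definition. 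The main point to check in both cases is that the specialization argument can be run \emph{uniformly}, so that a single parameter set $A \subseteq M$ of size $|K|$ works; this is exactly the strengthening over classical Hilbertianity that the earlier sections were designed to supply.

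\textbf{Steps (c) and (d).} Case (c) (the Weissauer-type statement) and case (d) (the diamond theorem) will both be deduced from a single technical proposition — a fully Hilbertian analog of Haran's diamond criterion — whose proof mirrors Haran's original \cite{Haran1999Invent} but with the Hilbert-set input replaced by full Hilbertianity. Concretely: starting from a Galois cover $F/M(x)$, one uses the two Galois extensions $M_1, M_2$ of $K$ (or, in case (c), $N$ and its conjugates under $\Gal(K)$) to split the relevant embedding problem over $K(x)$; full Hilbertianity of $K$ yields $|K|$ many linearly disjoint solution fields over $K$, and the "diamond" hypothesis $M \not\subseteq M_i$ guarantees these descend to $|K|$ many linearly disjoint solution fields over $M$. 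Case (c) then follows since a proper finite extension of a Galois extension $N/K$ satisfies the diamond hypothesis (with $M_1 = N$ and $M_2$ a suitable conjugate of $M$ over $K$), and (a) handles the residual finite part.

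\textbf{Main obstacle.} The crux is the diamond proposition: one must carry out Haran's inductive construction of the solution field while simultaneously \emph{keeping track of a full-cardinality family of specializations at every stage of the induction}, rather than just one. Classical proofs produce a Hilbert set and are done; here each step of the embedding-problem solution must be accompanied by a bookkeeping of linearly disjoint specialized fields, and one must ensure these disjointness conditions are compatible across the finitely many steps and across the $|K|$-indexed family simultaneously. I expect this uniform bookkeeping — essentially showing that the relevant polynomials stay absolutely irreducible along the whole family so that the characterization theorem applies — to be where all the real work lies; once it is set up, (a), (b), and (c) drop out as comparatively formal consequences.
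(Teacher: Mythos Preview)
Your overall strategy---adapt each classical Hilbertianity proof by substituting full-Hilbertianity input---diverges substantially from the paper's method, and has genuine gaps.

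The paper does \emph{not} redo Haran's argument with added bookkeeping. Instead it isolates the \emph{Haran--Shapiro induction} (twisted wreath products plus the Shapiro map) as the single engine behind all the permanence results for semi-free profinite groups, and proves one technical lemma (Lemma~\ref{lem:wre_prod}) showing this construction is field-theoretic: starting from a rational embedding problem for $M$, the induced twisted-wreath-product embedding problem for $K$ is again rational, and a geometric solution over $K$ restricts (via $\rho\pi(\cdot)|_{\gal(M)}$) to a geometric solution over $M$. Once that is in place, cases (a)--(d) follow by direct citation of the group-theoretic results in \cite{Bary-SorokerHaranHarbater}; independence of the $|K|$-family of solutions is inherited automatically from the group-theoretic statement, so your ``uniform bookkeeping across stages'' obstacle never arises. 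Your proposal never mentions twisted wreath products, which means you would have to rebuild each case from scratch.

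Two specific errors: for (b), you appeal to ``permanence of full Hilbertianity under increasing unions,'' which is neither proved in the paper nor true in general (unions of Hilbertian fields need not be Hilbertian); the abelian case is handled in the paper by reduction to the group-theoretic Kuyk-type statement via the same twisted-wreath machinery, not via unions. For (c), your reduction to (d) takes $M_2$ to be ``a suitable conjugate of $M$ over $K$,'' but the diamond hypothesis requires $M_1,M_2$ to be \emph{Galois} over $K$, and conjugates of $M$ typically are not; the Weissauer case is again reduced in the paper to the already-proven group-theoretic cases, not via this construction.
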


(Our full result appears in Section~\ref{sec:sep-ext}.)

In order to prove Theorem~\ref{IT:separable} we identify and exploit a connection between fully Hilbertian fields and so-called semi-free profinite groups. This is a refinement of the \textbf{twinning principle} \cite{JardenLubotzky1992} suggested by Jarden and Lubotzky. They note a connection between
results about freeness of subgroups of free profinite groups and
results about Hilbertianity of separable extensions of Hilbertian
fields.  Furthermore, Jarden and Lubotzky state that the proofs in
both cases have analogies. In spite of that they add that
\textit{it is difficult to see a real analogy between the proofs
of the group theoretic theorems and those of field theory}.

In \cite{Haran1999Group,Haran1999Invent} Haran provides
more evidence to the twinning principle by proving his diamond
theorem in both cases (see also \cite[Theorems~13.8.3 and
25.4.3]{FriedJarden2005}). Haran's main tool in both proofs is
twisted wreath products. (Using twisted wreath product one can
induce embedding problems and then, on the other direction, induce
weak solutions via Shapiro's map. Haran shows that under some
conditions those weak solutions are in fact solutions, i.e.,
surjective.  We refer to this method henceforth as the
\textbf{Haran-Shapiro induction}.)

 In this work we prove all the permanence results for fully Hilbertian fields by reducing them to the group theoretic case.  In fact we show that any group theoretic result obtained via the Haran-Shapiro induction transfers an analogous field theoretic result.
First we explain why a better analogy is between \emph{semi-free profinite}
groups of rank $m$ and \emph{fully Hilbertian} fields of cardinality
$m$\footnote{If one wants to consider only Hilbertian fields, then
our work shows the connection between Hilbertian fields and
profinite groups having the property that any finite split
embedding problem is solvable.}. A semi-free group is, in a sense,
a free group without projectivity, i.e., a semi-free projective
group is free. These groups recently appeared with connection to
Galois theory, see \cite{Bary-SorokerHaranHarbater}.

The idea of the proof is to show that all the constructions in the Haran-Shapiro induction are ``field theoretic''. This is based on the theory of rational embedding problems and geometric solutions developed in the first author's PhD dissertation, see \cite{Bary-SorokerPACext}.

Then all the permanence results about
semi-free profinite groups of \cite{Bary-SorokerHaranHarbater} carry over to fully Hilbertian fields, proving Theorem \ref{IT:separable}.
To the best of our knowledge, this is the first time a reduction from the field theoretic case to its group theoretic counterpart has been made. We hope this is a step towards a rigorous formulation of the twinning principle.

\subsection{Galois theory of complete local domains}

Complete local domains play an important role in number theory and algebraic geometry, and their algebraic properties have been described by Cohen's structure theorem in 1946. However, the Galois theoretic properties of their quotient fields have only recently been understood. The first result in this direction has been made in \cite{HarbaterStevenson2005}, where
Harbater-Stevenson essentially prove that the absolute Galois group of $K=K_0((X,Y))$ is semi-free of rank $|K|$, for an arbitrary base field $K_0$ (see also \cite{Bary-SorokerHaranHarbater}). The general case was treated independently by Pop \cite{Pop2009} and the second author \cite{Paran2010}:

\begin{theorem}
\label{IT:com-krull-dom}
Let $K$ be the quotient field of a complete local domain of
dimension exceeding 1. Then $\gal(K)$ is semi-free of rank $|K|$.
\end{theorem}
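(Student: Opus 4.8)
The plan is to derive the statement from two properties of $K=\mathrm{Frac}(R)$: that $K$ is \emph{ample}, and that $K$ is \emph{fully Hilbertian} (necessarily of cardinality $|K|$). Granting both, one solves finite split embedding problems for $\gal(K)$ with $|K|$ independent solutions, as follows. Given $(\vphi\colon\gal(K)\twoheadrightarrow G,\ \alpha\colon\Gamma\twoheadrightarrow G)$ with $\Gamma=H\semidirect N$ and $G=\gal(L/K)$ finite Galois, ampleness and Pop's theorem provide a finite Galois extension $F/K(x)$, regular over $L$, solving the inflated problem over $K(x)$; write $F=L(x)[Y]/(f(x,Y))$, so $L$ is the algebraic closure of $K$ in $F$. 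Full Hilbertianity yields $A\subseteq H(f)$ with $|A|=|K|$ such that the fields $\Fgag_a$ ($a\in A$) are linearly disjoint over $L$; each $\Fgag_a/K$ is a solution of the original embedding problem, and linear disjointness over $L$ makes these $|K|$ solutions independent. Since $K$ is Hilbertian it is not separably closed, and applying full Hilbertianity to one suitable polynomial shows $\rank\gal(K)=|K|$; hence $\gal(K)$ is semi-free of rank $|K|$.

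Both remaining properties start from Cohen's structure theorem: $R$ is module-finite over a regular complete local subring $R_0$, isomorphic to $k[[t_1\nek t_n]]$ when $R$ is equicharacteristic and to $\calO[[t_1\nek t_{n-1}]]$ ($\calO$ a complete DVR) in mixed characteristic; as $\dim R=n\geq 2$, in both cases $R_0=A[[t]]$ with $A$ a complete local domain of positive dimension. For ampleness, $\mathrm{Frac}(A[[t]])$ is ample --- this is the theorem that quotient fields of complete local domains of dimension $\geq2$ are ample, proved by a valuation-theoretic/patching argument using the $t$-adic topology --- and ampleness is inherited by the finite extension $K/\mathrm{Frac}(R_0)$.

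The substantial point is that $K$ is fully Hilbertian, a ``fully'' strengthening of Weissauer's theorem that $\mathrm{Frac}(R)$ is Hilbertian. By the characterization of fully Hilbertian fields it suffices to treat an absolutely irreducible $f(X,Y)\in K[X,Y]$ separable in $Y$; clearing denominators we may assume its coefficients lie in $R_0=A[[t]]$, and we use the $t$-adic valuation $v$, with residue field $\mathrm{Frac}(A)$, extended to $K$. A Bertini-type irreducibility theorem over the residue field together with Krasner's lemma shows that $f(a,Y)$ stays irreducible over $K$ for all $a$ in a suitable $v$-adic disc away from a $v$-adically nowhere-dense (indeed small) exceptional set; such a disc has cardinality $|K|$. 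To obtain $|K|$ specializations with the $\Fgag_a$ linearly disjoint over $L$, one runs a transfinite recursion: having chosen $a_i$ ($i<\lambda<|K|$) with the $\Fgag_{a_i}$ linearly disjoint over $L$, the set of $a$ for which $\Fgag_a\subseteq\prod_{i<\lambda}\Fgag_{a_i}$ is again $v$-adically small because $\Fgag_a$ varies algebraically with $a$ and only finitely much data is being excluded; so a good $a$ survives, and after $|K|$ steps one has the desired set. One may pass down the tower $A[[t]]$, with $A$ of dimension $\geq 1$, by induction on $\dim R$.

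The main obstacle is exactly this last upgrade: one needs a \emph{quantitative} form of the Weissauer/Bertini argument ensuring that the exceptional set at every stage of the recursion is genuinely of cardinality $<|K|$ (equivalently, $v$-adically nowhere dense), so that $|K|$ good specializations persist through a recursion of length $|K|$ --- this is where completeness of $R$ enters essentially, and where the equicharacteristic and mixed-characteristic cases will need slightly different handling of $v$. An alternative route, which avoids the intermediate notion of a fully Hilbertian field, is to prove semi-freeness of $\gal(K)$ directly by algebraic patching over the complete domain $R$ with $|K|$ mutually independent branch loci; the abundance estimate above then reappears as the freedom in placing branch points.
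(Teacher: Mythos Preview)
Your high-level strategy matches the paper: reduce semi-freeness to ampleness plus full Hilbertianity, and invoke Cohen's structure theorem to get a handle on $K$. But the mechanism you propose for full Hilbertianity is not the one that works, and the gap you flag at the end is genuine and, as stated, not closable along the lines you suggest.

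The difficulty with your single-valuation approach is that the transfinite recursion step is exactly where Hilbertianity fails to upgrade to full Hilbertianity in general (cf.\ Jarden's example in the paper: a Hilbertian PAC field of uncountable cardinality with only countably many specialized fields). Saying the bad set is ``$v$-adically small because $\Fgag_a$ varies algebraically with $a$'' is not an argument: after $\lambda$ steps the compositum $\prod_{i<\lambda}\Fgag_{a_i}$ has $|\lambda|$ many finite subextensions, and there is no general Bertini/Krasner bound forcing the set of $a$ with $\Fgag_a$ landing inside it to have cardinality $<|K|$. One valuation simply does not carry enough information to distinguish $|K|$ many solution fields.

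The paper's approach is structurally different. It uses a \emph{family} $\calF$ of $|K|$ many inequivalent discrete valuations on $K$ (trivial on the prime field, with $|K|$ of them totally split in any given finite separable extension, and each element of $K^\times$ having nonzero value at only finitely many of them). Given such a family, the recursion step goes as follows: the previously constructed solution fields $L_\delta$, $\delta\in\Delta$ with $|\Delta|<|K|$, are all defined over a subfield $M\subset K$ of cardinality $<|K|$; pick \emph{fresh} valuations from $\calF$ that are trivial on $M$ (hence unramified in every $L_\delta$); then specialize $x$ to an $\alpha\in K$ which $v$-adically approximates the branch points of $F/L(x)$ with respect to these fresh valuations. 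The crux (Corollary~\ref{cor:vel-geo-sol} and the lemmas feeding it) is that such an $\alpha$ forces the geometric inertia groups to become inertia groups of the chosen valuations in the new solution field $L'$, and since those inertia groups generate $\gal(L'/L)$ while the $L_\delta$ are unramified at the same valuations, $L'\cap L_\delta=L$ for every $\delta$. So linear disjointness is certified by \emph{ramification}, not by a size estimate on an exceptional locus.

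One further subtlety you missed: in positive characteristic the paper does \emph{not} prove that $K$ itself is fully Hilbertian --- only that its maximal purely inseparable extension $K_{\mathrm{ins}}$ is (wild ramification forces a passage to $K_{\mathrm{ins}}$ to control residue fields at branch points). This suffices since $\gal(K)\cong\gal(K_{\mathrm{ins}})$, but it means your statement ``$K$ is fully Hilbertian'' is stronger than what is actually established, and the paper explicitly leaves it open.
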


The fields $K$ in the above theorem are ample \cite{Pop2009}. For such fields, we have the following result:

\begin{theorem}
If $K$ is a fully Hilbertian ample field, then $\Gal(K)$ is semi-free of rank $|K|$. \end{theorem}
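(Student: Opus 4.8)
The plan is to combine the semi-freeness criterion for ample fields with the full Hilbertianity hypothesis, exactly in the spirit of how one proves that Hilbertian ample fields have quasi-free absolute Galois group. Recall that a profinite group $G$ of rank $m$ is semi-free if every nontrivial finite split embedding problem for $G$ has $m$ independent (i.e.\ "linearly disjoint" in the Galois-theoretic sense) proper solutions. So fix a finite split embedding problem for $\Gal(K)$: a finite Galois extension $L/K$ inside $K_s$, a finite group $\Gamma$ acting on a finite group $N$, and an identification $\Gamma \cong \Gal(L/K)$; we must produce $|K|$ independent solution fields.

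First I would invoke ampleness. Since $K$ is ample, by Pop's theorem the field $K(x)$ — equivalently $L(x)$ — admits a solution to the corresponding \emph{geometric} (regular) embedding problem: there is a finite Galois extension $F/L(x)$, regular over $L$, together with a further regular Galois extension realizing $N \rtimes \Gamma$ over $K(x)$ whose restriction to $L(x)$ recovers $F$ and whose Galois action matches the prescribed one. Concretely, one builds a regular Galois extension $E/K(x)$ with group $N \rtimes \Gamma$ such that $E \cap L_s = L$ and the quotient $E/$(the $N$-part) is $L(x)$; this is precisely the statement that split embedding problems over $K(x)$ are solvable for ample $K$. The algebraic closure of $K$ in $E$ is exactly $L$, and $[E:L(x)] = |N|$, so $[E:K(x)] = |N\rtimes\Gamma|$.

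Now I would apply full Hilbertianity to $E/K(x)$, which is a finite Galois extension of $K(x)$ with $L$ the algebraic closure of $K$ in $E$. By Theorem~1 (the characterization, part (c)), there exist $|K|$ many $a \in K$ such that $[\Egag_a : K] = [E : K(x)]$ and all the specialized fields $\Egag_a$ are linearly disjoint over $L$. For each such $a$, the specialization $\Egag_a / K$ is Galois with group $N \rtimes \Gamma$ (the degree being correct forces the decomposition group at the specialization to be the whole group, and genericity of $a$ in the Hilbert set ensures the specialized Galois action agrees with the original one); its restriction to $L$ is $\Gal(L/K) = \Gamma$ because $L$ is the constant field, so $\Egag_a$ solves the original embedding problem. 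Linear disjointness of the $\Egag_a$ over $L$ is exactly the independence of the solutions required by the definition of semi-free, and there are $|K|$ of them, which equals $\rank \Gal(K)$ since $K$ is infinite (Hilbertian fields are infinite). Hence $\Gal(K)$ is semi-free of rank $|K|$.

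The main obstacle — and the point that needs the most care — is the bookkeeping in the middle step: one must be sure that the \emph{regular} split embedding problem one feeds into Pop's theorem is set up so that, after specialization, the Galois group of $\Egag_a/K$ is the \emph{correct} group extension $N \rtimes \Gamma$ with the \emph{prescribed} action, and that its restriction to $L$ is the identity-compatible copy of $\Gamma$, not merely some abstractly isomorphic configuration. This is where the theory of rational embedding problems and geometric solutions (the framework underlying the Haran--Shapiro induction referenced in the introduction) is used: it guarantees that a geometric solution of the rational embedding problem specializes, at a suitably generic point of the Hilbert set, to an honest solution of the original finite embedding problem over $K$. The remaining points — that the specialized degree being maximal makes the solution \emph{proper} (surjective), and that "linearly disjoint over $L$" is the right notion of independence of solutions — are routine once this compatibility is in place. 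One should also note the cardinality-equality $\rank \Gal(K) = |K|$, which for an infinite field holds because $\Gal(K)$ is generated by at most $|K|$ elements and, being semi-free of positive rank, has rank exactly $|K|$; it is in fact part of what the theorem asserts.
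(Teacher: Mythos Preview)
Your proposal is correct and follows essentially the same route as the paper: Pop's theorem for ample fields turns the given finite split embedding problem into a rational one (your regular extension $E/K(x)$ with constant field $L$), and then full Hilbertianity---in its Galois characterization, exactly the part (c) you cite---supplies $|K|$ independent geometric solutions, which are in particular independent proper solutions of the original embedding problem. The paper packages this as a one-line corollary of its Proposition on fields for which every finite split embedding problem is equivalent to a rational one; your longer discussion simply unpacks that proposition and the meaning of ``geometric solution,'' and your closing remark on $\rank\Gal(K)=|K|$ is handled in the paper by a separate short argument via realizing $(\bbZ/2)^{|K|}$.
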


We prove a stronger results about these, namely their maximal purely inseparable extension is fully Hilbertian.

\begin{theorem}
\label{IT:com-krull-dom-hil}
Let $K$ be the quotient field of a complete local domain of
dimension exceeding 1. Then the maximal purely inseparable extension $K_{ins}$ of $K$ is fully Hilbertian.
\end{theorem}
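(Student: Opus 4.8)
The plan is to reduce, using Cohen's structure theorem and the permanence results already established, to the quotient field of a power series ring, and then to promote ``Hilbertian'' to ``fully Hilbertian'' for that field by combining its ampleness with the patching-theoretic description of its absolute Galois group.

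\emph{Reduction to a power series ring.} By Cohen's structure theorem, $R$ is module-finite over a subring $R_0$ which is a formal power series ring $k[[t_1,\dots,t_d]]$ over a field in the equicharacteristic case and $\Lambda[[t_1,\dots,t_{d-1}]]$ over a complete discrete valuation ring $\Lambda$ in the mixed characteristic case; since $\dim R=d\ge 2$, in both cases $R_0$ is a complete local domain which is not a field. Put $K_0=\Quot(R_0)$, so that $K/K_0$ is finite. Then $K_{ins}=K\cdot(K_0)_{ins}$, and because $(K_0)_{ins}$ is perfect and $K_{ins}$ is generated over it by the finitely many generators of $K/K_0$, the extension $K_{ins}/(K_0)_{ins}$ is finite and separable. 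By Theorem~\ref{IT:separable}(a) it therefore suffices to show that $(K_0)_{ins}$ is fully Hilbertian, i.e.\ we may assume $R=R_0$; in characteristic $0$ this just amounts to showing $K_0$ itself is fully Hilbertian.

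\emph{The power series case.} I would feed in two facts about $K_0$, both inherited by $K':=(K_0)_{ins}$: first, $K_0$ is ample \cite{Pop2009}, and $K'$, being an algebraic extension of $K_0$, is again ample; second, $K_0$ is Hilbertian --- the Weissauer-type statement for quotient fields of complete local domains of dimension exceeding one, which may also be proved directly by a reduction-step argument inside $R_0$ --- so $K'$, being purely inseparable over $K_0$, is Hilbertian too. It then remains to strengthen Hilbertianity to full Hilbertianity for the ample field $K'$, for which I would verify the criterion (one of the equivalent conditions characterizing fully Hilbertian fields): given an absolutely irreducible $f(X,Y)\in K'[X,Y]$ that is separable in $Y$, produce $|K'|$ values $a$ in the Hilbert set $H(f)$, with roots $b_a$ of $f(a,Y)$, such that the fields $K'(b_a)$ are linearly disjoint over $K'$. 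Since $f$ is absolutely irreducible it remains irreducible over $L(X)$ for every algebraic extension $L/K'$, so the task becomes a transfinite recursion: given such a family indexed by some $\lambda<|K'|$ with $N_\lambda$ the compositum of the fields $K'(b_{a_i})$, one must find a new $a\in K'$ in $H(f)$ for which $f(a,Y)$ is still irreducible over $N_\lambda$ (then, automatically, $[N_\lambda(b_a):N_\lambda]$ is the full $Y$-degree, so $K'(b_a)$ is linearly disjoint from $N_\lambda$ over $K'$).

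\emph{Main obstacle.} The recursion step is the crux. Although $K'$ is Hilbertian, $N_\lambda$ need not be, and it has cardinality $|K'|$, so a naive ``the bad set is small'' argument is unavailable. This is exactly where the ampleness of $K'$ and the geometry behind Theorem~\ref{IT:com-krull-dom} are needed: the abundance of regular solutions of finite split embedding problems over $K'(X)$ obtained by patching over the complete domain $R_0$, together with the theory of rational embedding problems and geometric solutions of \cite{Bary-SorokerPACext}, should force the set of $a\in K'\cap H(f)$ with $f(a,Y)$ irreducible over $N_\lambda$ to have cardinality $|K'|$, so that the recursion runs to length $|K'|$. Carrying out these geometric constructions over the perfect field $K'=(K_0)_{ins}$ rather than over $K_0$ is what keeps them free of inseparability pathologies, which is presumably why the theorem is phrased for $K_{ins}$ rather than for $K$. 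The mixed-characteristic case is handled identically, since $R_0=\Lambda[[t_1,\dots,t_{d-1}]]$ again has dimension $d\ge 2$.
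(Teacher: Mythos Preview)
Your reduction via Cohen's structure theorem and the passage to $(K_0)_{ins}$ is correct and matches the paper's own reduction. The gap is in the second half: you cannot upgrade ``Hilbertian'' to ``fully Hilbertian'' using only ampleness plus Hilbertianity. Example~\ref{exa:jarden} exhibits a PAC (hence ample) Hilbertian field of uncountable cardinality whose absolute Galois group has countable rank, so it has only countably many finite separable extensions and is therefore \emph{not} fully Hilbertian. Thus your ``should force'' sentence in the Main Obstacle paragraph is not merely vague --- without further structural input the desired conclusion is actually false. The abundance of regular solutions over $K'(X)$ guaranteed by ampleness tells you nothing about whether their \emph{specializations} over $K'$ can be made linearly disjoint in cardinality $|K'|$; that is precisely the content of full Hilbertianity, so you are arguing in a circle.

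The paper supplies the missing structural input via Proposition~\ref{prop:fully-Hilbertian}: a large family $\calF$ of discrete valuations of $K_0$, trivial on the prime field, with the property that each nonzero element has value~$0$ at almost all $v\in\calF$, and such that $|K_0|$ of them split completely in any given finite separable extension. The recursion step is then resolved not by patching but by ramification bookkeeping: the solutions $L_\delta$ already constructed (indexed by $\delta\in\Delta$ with $|\Delta|<|K_0|$) are defined over a subfield $M$ of cardinality $<|K_0|$, so one can pick valuations in $\calF$ trivial on $M$ and hence unramified in every $L_\delta$. Corollary~\ref{cor:vel-geo-sol} then produces a new geometric solution whose solution field $L'$ has $\gal(L'/L)$ \emph{generated} by inertia groups at those valuations, forcing $L'\cap L_\delta=L$ for every $\delta$. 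The passage to $K_{ins}$ is used because the argument of Lemma~\ref{lem:exact-cond} and Corollary~\ref{cor:vel-geo-sol} needs the residue fields of $F$ at the chosen ramification primes to equal $L$ exactly, not merely to be purely inseparable over $L$; enlarging $K$ by a finite purely inseparable extension achieves this. Your guess about why the theorem is phrased for $K_{ins}$ is thus in the right spirit, but the actual mechanism is valuation-theoretic rather than patching-theoretic.
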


Note that if $K$ is of characteristic $0$, then $K$ itself is fully Hilbertian. 

Theorem~\ref{IT:com-krull-dom-hil} provides a new proof of Theorem~\ref{IT:com-krull-dom} since $\gal(K) = \gal(K_{ins})$. However, we note that Theorem \ref{IT:com-krull-dom-hil} is stronger than Theorem~\ref{IT:com-krull-dom}, since there exist ample fields   of any characteristic with semi-free absolute Galois group that are not fully Hilbertian (Remark~\ref{rem:free-no-hil}).

We do not know in general whether $K$ is fully Hilbertian or not. 
Our key result states that there is $A\subseteq K$ of independent irreducible specializations provided that all the ramification points are separable over $K$. 

To prove Theorem~\ref{IT:com-krull-dom-hil} we use the Hilbertianity of $K$. Using the
density theorem of Hilbertian sets, we find irreducible
specializations with the following nice feature. The geometric
inertia groups of the function fields are ``specialized'' to
inertia groups of valuations of $K$. 

During the Oberfolwach workshop on the arithmetic of fields after reporting this work, we learned from Pop, that he uses a similar approach to prove Theorem~\ref{IT:com-krull-dom}.

\subsection*{Acknowledgment}
We wish to thank Arno Fehm, Dan Haran, Moshe Jarden, and Franz-Viktor Kuhlmann for useful discussions about this work.

\section{Embedding problems, Hilbertian fields, and fully Hilbertian fields}
\subsection{Fully Hilbertian fields in terms of polynomials}
Recall that, for an irreducible polynomial $f(X,Y)\in K[X,Y]$ that is separable in $Y$, the Hilbert set $H(f)\subseteq K$ is the set of all $a\in K$ such that $f(a,Y)$ is irreducible. It will be convenient sometimes to restrict to the co-finite subset $H'(f)\subseteq H(f)$ of all $a\in H(f)$ for which $\deg f(a,Y) = \deg_Y f(X,Y)$ and $f(a,Y)$ separable.
By definition $K$ is Hilbertian if and only if all Hilbert sets are non-empty, and hence
infinite. In fact the cardinality of each Hilbert set must be as
large as possible.

\begin{proposition}[Jarden]\label{prop:largeHilbertsets}
Let $K$ be a Hilbertian field and $H = H(f)$ a Hilbert set over $K$.
Then $|H| = |K|$.
\end{proposition}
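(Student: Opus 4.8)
The plan is to establish the non-obvious inequality $|K| \le |H|$ by proving the stronger statement that $H - H = K$; combined with $H \subseteq K$ (hence $|H| \le |K|$) and the fact that a Hilbert set over a Hilbertian field is infinite, this yields $|H| = |K|$ via the cardinal identity $|H \times H| = |H|$ for infinite $H$.

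The key observation is that Hilbert sets are invariant under translation. For $b \in K$, the polynomial $g(X,Y) := f(X+b,Y)$ again lies in $K[X,Y]$, is irreducible there, and is separable in $Y$: the substitution $X \mapsto X+b$ is a $K$-algebra automorphism of $K(X)$, so it preserves both irreducibility in $K[X,Y]$ and separability in $Y$ over $K(X)$. Directly from the definition, $c \in H(g)$ iff $f(c+b,Y)$ is irreducible iff $c+b \in H(f)$, so $H(g) = H - b$; thus every translate of a Hilbert set is a Hilbert set. Next I invoke the standard robustness of Hilbertianity: over a Hilbertian field the intersection of finitely many Hilbert sets is non-empty --- indeed it contains a Hilbert set, obtained by passing to a primitive element of the compositum of the associated finite separable extensions of $K(x)$ (this is the usual reduction of the Hilbertianity condition to a single polynomial; see \cite{FriedJarden2005}). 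Applying this to $H(f)$ and its translate $H(f) - b$ produces, for every $b \in K$, an element $c \in H(f) \cap (H(f) - b)$; then $c \in H$ and $c + b \in H$, whence $b = (c+b) - c \in H - H$. Therefore $H - H = K$.

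It remains to run the cardinality count: $|K| = |H - H| \le |H \times H|$. Since $K$ is Hilbertian it is infinite, and $H - H = K$ then forces $H$ to be infinite, so $|H \times H| = |H|$; hence $|K| \le |H| \le |K|$ and $|H| = |K|$. The only ingredient in this argument that is not purely formal is the closure of non-empty Hilbert sets under finite intersection, but this is a well-known feature of Hilbertian fields and not something new to prove here; once it is in hand, the translation trick together with elementary cardinal arithmetic concludes the proof at once.
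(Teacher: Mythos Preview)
Your proof is correct and takes a genuinely different route from the paper's. The paper handles the uncountable case by constructing a discrete valuation $v$ on $K$ (via a transcendence basis over the prime field) that admits $|K|$ pairwise disjoint open balls, and then invokes the density of Hilbert sets in valuation topologies \cite{GeyerJarden1975} to place a point of $H$ in each ball. Your argument bypasses valuations entirely: translation invariance of Hilbert sets together with the closure of Hilbert sets under finite intersections gives $H - H = K$, and then $|K| \le |H \times H| = |H|$ since $H$ is infinite.

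Your approach is more elementary and self-contained; it uses only the most basic robustness property of Hilbertian fields and no outside density result. The paper's valuation-theoretic argument, on the other hand, is thematically aligned with later sections (where density of Hilbert sets in valuation topologies is used again, e.g.\ in Corollary~\ref{cor:vel-geo-sol}), and the construction of $|K|$ disjoint balls is of some independent interest. But as a standalone proof of this proposition, yours is cleaner.
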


\begin{proof}
If $K$ is countable the result is trivial.

Assume $m = |K| > \aleph_0$. Let $\{t_i \mid i\leq m\}$ be a
transcendence basis of $K$ over its prime field $K_0$. Define a
valuation $v$ on $K_0(t_i \mid i\leq m)$ inductively such that
$v(t_1) = 1$ and
\[
 j< i \longrightarrow v(t_j) > v(t_i).
\]
We have  $v(t_i - t_j) = v(t_i) \leq  v(t_1) = 1$, for all $j< i
\leq m$. Extend $v$ to $K$ arbitrarily.

For each $i<m$ let $B_i = \{ x\in K \mid  v(x-t_i) > 1\}$ be an open
ball. If $x\in B_i$, then $v(x) = v(x-t_i + t_i) = v(t_i)$. This
implies that these balls are disjoint. The proof is now done, since
$H$ is dense in the topology defined by $v$ \cite[Lemma~4.1]{GeyerJarden1975}.
\end{proof}

Let us now focus on the fields generated by a root of $f(a,Y)$ where $a$ varies over $H(f)$.
The last result asserts that $|H(f)|=|K|$, hence it is somewhat surprising that the cardinality of the set of fields generated by  a root of $f(a,X)$, $a\in H(f)$ can be smaller than $|K|$. The following example gives a Hilbertian field $K$ such that $|K|>\aleph_0$, but the cardinality of the set of specialized fields is countable.

\begin{example}[Jarden]\label{exa:jarden}
Let $K$ be a pseudo algebraically closed (PAC) field with free
absolute Galois group of rank $> \aleph_0$. Then $|K|>\aleph_0$.
Now the free profinite group $G$ of countable rank is a subgroup
of $\gal(K)$; let $L$ be its fixed field. Then $\gal(L) =G$.
By \cite[Corollary~11.2.5]{FriedJarden2005} $L$ is PAC. Clearly $|L|=|K|$. Furthermore,
$L$ is Hilbertian, as an $\omega$-free PAC field
\cite[Corollary~27.3.3]{FriedJarden2005}.

Every finite separable extension of $L$ corresponds to an open subgroup of $G$. But the rank of $G$
is countable; hence $G$ has only countably many open subgroups, and thus there are only countably many fields generated by roots of $f(a,Y)$, $a\in H(f)$.
\end{example}

We will use the following notation notation: Let $K$ be a field and fix a separable closure $K_s$ of $K$. Consider an irreducible polynomial $f(X,Y)\in K[X,Y]$ that is separable in $Y$ (i.e.\ $\frac{\partial f}{\partial Y} \neq 0$). Let $E=K(X)[Y]/(f(X,Y))$. Then $E/K(X)$ is a finite separable extension.
Let $L = E\cap K_s$ be the algebraic closure of $K$ in $E$.

For every $a\in H'(f)$ there is a unique prime $\pfrak$ of $E/L$ lying above $(X-a)$.
However the residue field $E_a$ of $E$ under $\pfrak$ is not unique.
Since we already fixed $E$ and $K_s$, and hence $L$, $E_a$ is unique up to an element $\sigma\in \gal(L)$. In any rate, $L\subseteq E_a$ independent of the choice of $E_a$.
We define fully Hilbertian fields to have as much as possible specialized fields that are linearly disjoint over $L$.

\begin{definition}
\label{def:ful-hil-gen}
A field $K$ is called fully Hilbertian if for every $f(X,Y)$ the following property holds. Let $E=K(X)[Y]/(f(X,Y))$ and $L = E\cap K_s$. Then there exists a set $A\subseteq H(f)$ of cardinality $|A| = |K|$ and a set of specialized fields $\{E_a \mid a\in A\}$ that are linearly disjoint over $L$.
\end{definition}

\begin{remark}
If $E \cong L(X)$, then $E_a = L$, for all $a$. However, formally, $L$ is linearly disjoint of $L$ over $L$. So in this trivial case, there always exists a set $A$ as in the definition.
\end{remark}

\begin{remark}
Definition~\ref{def:ful-hil-gen} is the most na\"{\i}ve definition. The definition is not canonical, in the sense that there are many choices of the fields $E_a$. This will be solved below, when we give a characterization of fully Hilbertian fields in terms of Galois extensions $E/K(X)$, or more accurately, in terms of rational embedding problems.
\end{remark}

We give a weaker definition of a `regular fully Hilbertian' field. Later we prove that in fact it is the same notion as fully Hilbertian, cf.\ \cite{Bary-Soroker2008IMRN} for the classical case.

\begin{definition}
\label{def:ful-hil-abs-irr}
A field $K$ is called \textbf{regular fully Hilbertian} if for any absolutely irreducible polynomial $f(X,Y)$ there exists a set $A\subseteq H(f)$ of cardinality $|A|=|K|$ and for each $a\in A$, a root $b_a$ of $f(a,X)$ such that the fields $K(b_a)$, $a\in A$ are linearly disjoint over $K$.
\end{definition}

\begin{remark}
It is natural to consider Galois fully Hilbertian fields -- fields having the strong specialization property for every irreducible polynomial $f(X,Y)$ separable and Galois in $Y$. before doing so, we introduce some useful terminology.
\end{remark}

\subsection{Embedding problems}
The following notions of rational embedding problem and geometric solutions are essential in this work, cf.\ \cite{Bary-SorokerPACext}.
Recall that an \textbf{embedding problem} for a field $K$ is a
diagram
\[
\xymatrix{%
    &\gal(K) \ar[d]^{\nu}\\
B\ar[r]^{\alpha}
    &A,
}%
\]
where $B,A$ are profinite groups and $\alpha,\nu$ are (continuous)
epimorphisms. The embedding problem is \textbf{finite} (resp.\
\textbf{split}) if $B$ is finite (resp.\ $\alpha$ splits). It is \textbf{non-trivial} if $\alpha$ is not isomorphism, or equivalently, $\ker \alpha\neq 1$.

A \textbf{weak solution} of $(\nu,\alpha)$ is a homomorphism
$\theta\colon \gal(K) \to B$ such that $\theta\nu = \alpha$. If
$\theta$ is also surjective, we say that $\theta $ is a
\textbf{proper solution}, or in short \textbf{solution}.

We can always replace $A$ with $\gal(L/K)$, where $\gal(L) = \ker
\nu$ and $\nu$ with the restriction map to get an equivalent
embedding problem.

In what follows we show that $B$ can also be considered as a
Galois group of some geometric object. In fact, the interesting object of study are the geometric embedding problems.

\begin{definition}
Let $E$ be a finitely generated regular transcendental extension of $K$, let $F/E$
be a Galois extension, and let $L = F\cap K_s$. Then the restriction
map $\alpha \colon \gal(F/E)\to \gal(L/K)$ is surjective, since
$E\cap K_s = K$. Therefore
\begin{equation}\label{eq:geometricep}
(\nu\colon \gal(K) \to \gal(L/K), \alpha\colon \gal(F/E)\to
\gal(L/K))
\end{equation}
is an embedding problem for $K$. We call such an embedding problem
a \textbf{geometric embedding problem}.

If $E=K(x_1,\ldots,x_d)$ is a field of rational functions over $K$
then we call \eqref{eq:geometricep} a \textbf{rational embedding
problem}.
\end{definition}

\begin{remark}
In the definition the transcendence degree $d$ of $E/K$ can be
arbitrary. However, in the setting of this work, it suffices to consider only the case $d=1$. This observation is based
on the Bertini-Noether lemma and the Matsusaka-Zariski theorem, see
\cite[Proposition~13.2.1]{FriedJarden2005} or
\cite[Lemma~2.4]{Bary-Soroker2008IMRN}.
\end{remark}

\begin{remark}
In \cite[Lemma~11.6.1]{FriedJarden2005} it is shown (in different terminology)
that every finite embedding problem for a field $K$ is equivalent to a geometric
embedding problem.

We shall see below that if $K$ is infinite, then any rational embedding problem has a weak solution. Hence the existence of a weak solution is necessary for an embedding problem to be  equivalent to a rational embedding problem. A central result in Galois theory of Pop \cite{Pop1996} asserts that if $K$ is ample, then every finite split embedding problem over $K$ is regularly solvable, and in our terminology, equivalent to a rational embedding problem.

In fact the regularity of finite split embedding problems immediately implies that the above  necessary condition, i.e., existence of a weak solution, also suffices for an embedding problem over an ample field to be regularly solvable. See Lemma~\ref{lem:basic-rat-eps}.
\end{remark}

The next natural step is to consider solutions to geometric embedding problems that come from field theory, i.e., are defined by places. Before giving the definition, we introduce additional notation. Let $E/K$ be an extension of fields and let $\phi$ be a place of $E$. We say that $\phi$ is a $K$-place if $\phi|_{K} = \id_K$. We denote by $\Egag_\phi$ (or simply $\Egag$ if there is no risk of confusion) the residue field of $E$ under $\phi$.

\begin{definition}\label{def:geometricsolution}
Consider a geometric embedding problem \eqref{eq:geometricep}. Let
$\phi$ be a $K$-place of $E$ and let $\Phi$ be an extension of $\phi$ to an $L$-place of $F$.
Assume that
\begin{enumerate}
\item
\label{def:geo-sol-a}
$\Egag = K$ and
\item
\label{def:geo-sol-b}
$\Phi/\phi$ is unramified.
\end{enumerate}
Then the decomposition group $D = D_{\Phi/\phi}$ is
isomorphic to $\gal(\bar F/K)$ (for a proof, apply \cite[Lemma~6.1.4]{FriedJarden2005} to the valutation rings of $\Phi/\phi$). Thus we have a
canonical map $\Phi^*\colon \gal(K) \to \gal(F/E)$ whose image is
$D$. Clearly $\Phi^* \nu = \alpha$, since $\Phi^*$ respects the
restriction maps (recall that $\nu$ is a restriction map). In other words $\Phi^*$ is a weak solution of \eqref{eq:geometricep}.

Let $\theta\colon \gal(K) \to \gal(F/E)$ be a weak solution of \eqref{eq:geometricep}. We say that $\theta$  is \textbf{geometric}, if there exists $\Phi/\phi$ satisfying \eqref{def:geo-sol-a} and \eqref{def:geo-sol-b} for which $\theta = \Phi^*$.
\end{definition}

The following lemma shows that if we take `larger' or `smaller' embedding problems from a given rational embedding problem, the new ones are also rational.

\begin{lemma}
\label{lem:basic-rat-eps}
Let  $(\nu \colon \gal(K)\to \gal(L/K), \alpha\colon \gal(F/K(x)) \to \gal(L/K))$ be a rational embedding problem for a field $K$.
\begin{enumerate}
\item If $\alpha=\beta\gamma$, for some epimorphisms $\gamma\colon \gal(F/K(x)) \to B$, $\beta\colon B\to \gal(L/K)$, then $(\nu,\beta)$ is rational.
\item
Let $\Phi^*$ be a geometric weak solution of $(\nu,\alpha)$. Then $\gamma \Phi^*$ is a geometric weak solution of $(\nu,\beta)$.
\item
\label{case:fib-pro-ep-is-rat}
 Let $N$ be a Galois extension of $K$ that contains $L$, let $\mu\colon \gal(K)\to \gal(N/K)$ be the restriction map, let $C = \gal(F/K(x))\times_{\gal(L/K)} \gal(N/K)$ be the fiber product, and let $\delta\colon C \to \gal(N/K)$ and $\epsilon\colon C\to \gal(F/K(x)$ be the quotient maps. Then there exists a natural isomorphism $C\cong \gal(FN/K(x))$ under which $\delta,\epsilon$ are the restriction maps
. In particular, $(\mu,\delta)$ is rational.
\item
\label{case:fib-pro-geo-sol} Let $\Phi$ be an $N$-place of $FN$
which is unramified over $K(x)$. If $\theta=\Phi^*$ is a weak
geometric solution of $(\mu,\beta)$, then $\epsilon \theta
=(\Phi|_{F})^*$. In particular, $\epsilon \theta$ is a weak
geometric solution of $(\nu,\alpha)$.
\item
\label{case:fin-pro-geo-sol-split}
If $F/K(x)$ is finite and $K$ infinite, then there exists a finite separable extension $N_0/K$ such that if $N_0\subseteq N$, then the embedding problem $(\mu,\beta)$ splits.
\end{enumerate}
\end{lemma}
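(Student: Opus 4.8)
The plan is to exhibit a geometric weak solution of $(\nu,\alpha)$ and then feed it through the fiber-product machinery of parts \eqref{case:fib-pro-ep-is-rat}--\eqref{case:fib-pro-geo-sol} to produce a splitting of $(\mu,\beta)$. The guiding principle is that a geometric weak solution of a rational embedding problem is essentially a section of $\gal(F/K(x))\to\gal(L/K)$ realized by a $K$-rational place, and once $N$ is large enough the image of such a section will intersect $\ker\delta$ trivially, hence give a complement to $\ker\delta$ in $C\cong\gal(FN/K(x))$, i.e.\ a splitting.

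First I would produce the geometric weak solution. Since $F/K(x)$ is finite, pick a model: $F$ is the function field of a smooth affine curve over $L$, and the place $x\mapsto a$ for all but finitely many $a\in K$ extends to an $L$-place $\Phi$ of $F$ that is unramified over $K(x)$ with residue field equal to $K$ at the chosen point (here one uses that $K$ is infinite, so such $a\in K$ exists, together with a standard argument that a smooth point over a field has a rational point in its residue extension — or more simply, one invokes the already-developed theory of geometric solutions, noting that the ``trivial'' weak solution via a $K$-rational unramified place always exists over an infinite field). This yields $\Phi^*\colon\gal(K)\to\gal(F/K(x))$ with $\Phi^*\nu=\alpha$. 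Write $\gamma=\Phi^*$ for brevity; its image $D$ is a subgroup of $\gal(F/K(x))$ mapping isomorphically onto its image under $\alpha$, which need not be all of $\gal(L/K)$ — but $\alpha\Phi^*=\nu$ is surjective, so in fact $\alpha(D)=\gal(L/K)$ and $D$ is a complement to $\ker\alpha$ in $\gal(F/K(x))$ precisely when $\Phi^*$ is injective; in general $D\cong\gal(\bar F/K)$ and $\alpha|_D$ is an isomorphism $D\to\gal(L/K)$ when $\bar F\cap K_s=L$, which holds because $L\subseteq\bar F$ and $\bar F/K$ has degree $[\bar F:K]=[\gal(F/K(x)):D]\cdot[\text{?}]$ — this is the delicate bookkeeping point.

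To sidestep that bookkeeping, here is the cleaner route, which I expect to be the actual argument. Take $N_0$ to be the fixed field in $K_s$ of $\bigcap_{\sigma}\sigma\,\overline{F}\,\sigma^{-1}$ pulled back appropriately — concretely, let $N_0$ be a finite Galois extension of $K$ inside $K_s$ large enough that $\Phi^*(\gal(N_0))$ already lands in $\ker\alpha$ and such that $\gal(N_0)$ surjects via $\Phi^*$ onto $\ker\alpha\cap D'$ for the relevant subgroup; more precisely, choose $N_0$ so that the composite $\gal(K)\xrightarrow{\Phi^*}\gal(F/K(x))$ together with the restriction $\mu$ separates enough: it suffices that $\ker\mu\subseteq\ker(\text{something})$. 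Now for $N\supseteq N_0$, form $C=\gal(F/K(x))\times_{\gal(L/K)}\gal(N/K)\cong\gal(FN/K(x))$ as in part \eqref{case:fib-pro-ep-is-rat}. Define $s\colon\gal(N/K)\to C$ by lifting: for $\tau\in\gal(N/K)$ pick any $\tilde\tau\in\gal(K)$ restricting to $\tau$ on $N$, and set $s(\tau)=(\Phi^*(\tilde\tau),\tau)$. This is well-defined once $N\supseteq N_0$ because changing $\tilde\tau$ by an element of $\gal(N)\subseteq\gal(N_0)$ changes $\Phi^*(\tilde\tau)$ by an element of $\ker\alpha$ — no wait, one needs it unchanged, so $N_0$ must be chosen so that $\Phi^*(\gal(N_0))=1$, i.e.\ $N_0$ is the fixed field of $\ker(\Phi^*)$, which is a finite separable (indeed Galois, after enlarging) extension of $K$ since $\gal(F/K(x))$ is finite. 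With this choice $s$ is a well-defined continuous homomorphism, and $\delta\circ s=\id_{\gal(N/K)}$ by construction, so $(\mu,\delta)=(\mu,\beta)$ splits. The main obstacle — and the only place real care is needed — is the very first step: constructing the geometric weak solution $\Phi^*$, i.e.\ finding a $K$-rational unramified place of $F/K(x)$; this is exactly where infinitude of $K$ enters and where one must invoke the existence theory for geometric solutions from \cite{Bary-SorokerPACext} rather than prove it from scratch. Everything after that is the routine ``shrink to the kernel'' argument that turns a weak solution into a splitting of a suitably enlarged embedding problem.
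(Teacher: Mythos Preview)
Your argument is correct and is essentially identical to the paper's: find an unramified $K$-rational place $\phi$ of $F$ (possible since $K$ is infinite and only finitely many primes ramify), take $N_0$ to be the residue field $\bar F$ of $F$ under $\phi$ (equivalently, the fixed field of $\ker\phi^*$, exactly as you wrote), and then for $N\supseteq N_0$ define the section $\sigma\mapsto(\phi^*(\tilde\sigma),\sigma)$. The only point to clean up is that you flag the construction of $\phi^*$ as ``the main obstacle'' requiring external theory, but it is entirely elementary---no appeal to \cite{Bary-SorokerPACext} is needed, just the finiteness of the ramification locus of the finite extension $F/K(x)$.
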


\begin{proof}
Let $F_0$ be the fixed field of $\ker \gamma$. Then $\gamma$ induces an isomorphism $\gammagag\colon \gal(F_0/K(x))\to B$. Thus, the following commutative diagram (recall that $\alpha$ is the restriction map) implies that $(\nu,\beta)$ is rational.
\[
\xymatrix{
\gal(F/K(x))\ar[r]^-{\res}\ar[dr]^-{\gamma}
    &\gal(F_0/K(x))\ar[d]^{\gammagag}\ar[dr]^{\res}\\
    &B\ar[r]^{\beta}
        &\gal(L/K)
}
\]
Now under these identifications, $\gamma \Phi^* = (\Phi|_{F_0})^*$, and hence is a geometric weak solution.

To see \eqref{case:fib-pro-ep-is-rat}, recall the natural isomorphism
\[
\gal(FN/K(x))  \cong \gal(F/K(x)) \times_{\gal(L/K)} \gal(N/K) = C
\]
is given by $\sigma \mapsto (\sigma|_{F}, \sigma|_{N})$.

Part~\eqref{case:fib-pro-geo-sol} is immediate since the correlation $\Phi \mapsto \Phi^*$ respects restrictions, and since $\epsilon$ is the restriction map.

To prove \eqref{case:fin-pro-geo-sol-split}, choose $a\in K$ such that $(x-a)$ is unramified in $F$. Extend the specialization $x\mapsto a$ to a place $\phi$ of $F/L$. Let $N_0$ be the residue field of $F$ under $\phi$. Then $\phi^*$ is a geometric weak solution of $(\nu,\alpha)$ that factors via $\gal(N_0/K)$. Choose $N$ as in \eqref{case:fib-pro-ep-is-rat} with $N_0\subseteq N$. We need to show that $\beta\colon C\to \gal(N/K)$ splits.

Indeed, let $\beta'\colon \gal(N/K) \to C$ be defined as follows. For each $\sigma\in \gal(N/K)$ we write $\phi^* (\sigma) = \phi^*(\sigma')$, where $\sigma'\in \gal(K)$ is an extension of $\sigma$ to $K_s$. Since $\phi^*$ factors via $\gal(N_0/K)$ it also factors via $\gal(N/K)$, and hence $\phi^*(\sigma)$ is well defined. Now we set $\beta'(\sigma) = (\phi^*(\sigma), \sigma)$. Clearly $\beta'$ is a section of $\beta$, as needed.
\end{proof}

\subsection{Characterization of fully Hilbertian fields}
We start by formulating the Hilbertianity property in terms of geometric solutions of rational embedding problems.

\begin{proposition}\label{prop:hilbertian-ep}
A field $K$ is Hilbertian if and only if every rational
embedding problem is geometrically solvable.
\end{proposition}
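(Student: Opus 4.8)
The plan is to prove both directions by translating between the "polynomial" description of Hilbertianity and the "place/decomposition group" description encoded in geometric solutions of rational embedding problems.

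For the direction "$K$ Hilbertian $\Rightarrow$ every rational embedding problem is geometrically solvable", I would start with a rational embedding problem $(\nu\colon \gal(K)\to \gal(L/K),\ \alpha\colon \gal(F/K(x))\to \gal(L/K))$. Write $F = K(x)[y]/(g)$ for a suitable irreducible $g\in K[x,Y]$ (or more precisely pick a primitive element for $F/K(x)$ and clear denominators) and let $H'(g)\subseteq K$ be the cofinite part of the Hilbert set. For $a\in H'(g)$ there is a unique prime of $F/L$ over $(x-a)$; I want to choose $a$ so that, in addition, (i) the residue field $\Fgag$ at that prime equals $L$ (equivalently, $g(a,Y)$ factors over $L$ into linear factors / the specialized field over $K$ contains $L$ with the right degree), and (ii) $(x-a)$ is unramified in $F$. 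Condition (ii) excludes only finitely many $a$ (the branch locus), so it is harmless. Condition (i) is exactly the statement that the decomposition group of a prime of $F$ over $(x-a)$, which is a subgroup of $\gal(F/K(x))$ mapping onto $\gal(L/K)$ via $\alpha$, is actually a \emph{complement} — i.e.\ isomorphic via $\alpha$ to $\gal(L/K)$ — which is what Hilbertianity buys us: it gives infinitely many $a$ for which the specialization of $F$ over $K$ has degree $[F:K(x)]\cdot[L:K]$ relative to $K$, forcing $\Fgag=L$ and $D\cong\gal(\overline F/K)\to\gal(F/E)$ to land as the required solution. Then Definition~\ref{def:geometricsolution} produces from such $a$, $\phi$ ($x\mapsto a$), $\Phi$ (an $L$-extension to $F$) the geometric weak solution $\Phi^*$, which is automatically surjective because $D\cong\gal(\overline F/K)$ and $\overline F$ is the full specialized field; hence $\Phi^*$ is a geometric (proper) solution.

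For the converse, "every rational embedding problem geometrically solvable $\Rightarrow$ $K$ Hilbertian", I would run the argument backwards. Given an irreducible $f(X,Y)\in K[X,Y]$ separable in $Y$, set $E=K(x)[Y]/(f)$ and let $L=E\cap K_s$. Pass to the Galois closure $F$ of $E/K(x)$; then $(\nu\colon\gal(K)\to\gal(L/K),\ \alpha\colon\gal(F/K(x))\to\gal(L/K))$ is a rational embedding problem (using that $F\cap K_s$ is a Galois extension of $K$ — one may need Lemma~\ref{lem:basic-rat-eps} to replace $L$ by $F\cap K_s$ and take a fiber product, but this is routine). A geometric solution $\theta=\Phi^*$ comes from a $K$-place $\phi$ of $K(x)$ with $\Egag$-residue conditions; restricting $\phi$ to a place of $K(x)$ it is, up to the finitely-many-exceptions bookkeeping, a specialization $x\mapsto a$ for some $a\in K$ (a place of $K(x)/K$ is either such a specialization or the place at infinity). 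Surjectivity of $\theta$ onto $\gal(F/K(x))$ forces the residue field of $E$ at the induced prime to have degree $[E:K(x)]$ over $L$, and combined with $\Egag=K$ this means $f(a,Y)$ is irreducible over $K$ — so $a\in H(f)$ and $H(f)\neq\emptyset$. Since this works for every such $f$, $K$ is Hilbertian.

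**The main obstacle** I anticipate is the careful handling of the finitely many "bad" specializations and the passage between a rational embedding problem and its Galois closure / between $L$ and $F\cap K_s$: one must make sure that geometric solvability of \emph{all} rational embedding problems (in particular the ones built from Galois closures and fiber products via Lemma~\ref{lem:basic-rat-eps}) really does deliver an \emph{irreducible} specialization of the original, possibly non-Galois, $f$, and conversely that Hilbertianity — which a priori only gives irreducibility over $K$ — suffices to force the residue field at the relevant prime of $F/L$ to be exactly $L$ (not a larger field), i.e.\ that the decomposition group is a genuine complement to $\ker\alpha$ in $\gal(F/K(x))$. This is precisely the point where one invokes that Hilbert sets are "large" and that specializations can be chosen with $D\cong\gal(\overline F/K)$; cf.\ \cite[Lemma~13.1.1 and Ch.~13]{FriedJarden2005}. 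Everything else is bookkeeping with restriction maps, decomposition groups, and the dictionary of Definition~\ref{def:geometricsolution}.
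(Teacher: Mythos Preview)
Your forward direction (``$K$ Hilbertian $\Rightarrow$ every rational embedding problem is geometrically solvable'') is essentially the paper's approach, but your exposition contains a genuine confusion that you should straighten out. You write that you want the residue field $\Fgag$ to \emph{equal} $L$ and that the decomposition group should be a \emph{complement} isomorphic to $\gal(L/K)$ via $\alpha$. That would give only a geometric \emph{weak} solution (a section of $\alpha$), not a proper one. For $\Phi^*$ to be surjective you need the decomposition group to be \emph{all} of $\gal(F/K(x))$, i.e.\ $[\Fgag:K]=[F:K(x)]$; this is what irreducibility of the specialized polynomial delivers, and it is what you correctly say at the end of that paragraph. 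The paper simply picks a primitive element $y$ for $F/K(x)$ with minimal polynomial $f$, throws away the finitely many $a$ where the discriminant or leading coefficient vanishes, uses Hilbertianity to choose $a$ with $f(a,Y)$ irreducible, and observes that then $[\Fgag:K]=[F:K(x)]$ forces $D=\gal(F/K(x))$.

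Your backward direction has a real gap. You assume that a geometric solution of the rational embedding problem attached to the Galois closure hands you a specialization $x\mapsto a\in K$ ``up to finitely-many-exceptions bookkeeping''. But geometric solvability gives you \emph{one} place $\Phi$; you do not get to choose it, so you cannot simply discard the finitely many bad cases. Concretely, the place on $K(x)$ might be the place at infinity (so no $a\in K$ at all), or it might land at an $a$ where the leading coefficient of $f$ vanishes (so $\deg f(a,Y)<\deg_Y f$ and irreducibility says nothing). The paper circumvents both problems by building a bigger rational embedding problem: it takes the splitting field over $K(x)$ of the product $\prod_{\alpha\in A} f(x+\alpha,Y)\,f(\tfrac{1}{x}+\alpha,Y)$ for a finite set $A\subseteq K$ larger than the number of roots of the leading coefficient. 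Then \emph{any} geometric solution, whether $\Phi(x)$ is finite or not, yields some shift $\alpha\in A$ for which $f(\Phi(x)+\alpha,Y)$ (or $f(\Phi(\tfrac{1}{x})+\alpha,Y)$) has the correct degree and is irreducible. Your Galois-closure-plus-fiber-product route can be made to work, but you must incorporate this kind of trick to control where the single guaranteed solution lands.
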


\begin{proof}
We can assume that $K$ is infinite.

Assume that every rational embedding problem is geometrically
solvable. Let $f(X,Y)\in K[X,Y]$ be an irreducible polynomial that
is separable in $Y$ and with a leading coefficient $a(X)\neq 0$.
Choose some finite set $A\subseteq K$ with more elements than the
numbers of roots of $a$.

Let $x$ be a variable, take $F$ to be the splitting field of
$\prod_{\alpha\in A} f(x+\alpha ,Y) f(\frac{1}{x}+\alpha,Y)$ over
$K(x)$, and let $L=F\cap K_s$. Let $\Phi^*$ be a geometric solution of the
rational embedding problem
\[
(\nu\colon \gal(K) \to \gal(L/K), \alpha\colon \gal(F/K(x))\to
\gal(L/K)).
\]

Assume without loss of generality that $\beta = \Phi(x) \in K$
(otherwise we repeat the following argument with $\beta=
\Phi(\frac{1}{x})$). Since $|A|$ is bigger than the number of roots
of $a(x)$, there exists $\alpha\in A$ for which $a(\beta+\alpha)\neq
0$. Let $y\in F$ be a root of $f(x+\alpha,Y)$ and let $\gamma =
\Phi(y)$. Then $\gamma$ is finite. But
since $\Phi^*$ is surjective, $[F:K(x)] = [\Fgag:K]$. In particular
\[
\deg_Y f(x+\alpha,Y) = [K(x,y):K(x)] = [K(\gamma),K],
\]
and thus $f(\beta+\alpha, Y)$ is irreducible (recall that
$f(\beta+\alpha,\gamma)=0$).

Next assume that $K$ is Hilbertian and consider a rational finite
embedding problem
\[
(\nu\colon \gal(K) \to \gal(L/K), \alpha\colon \gal(F/K(x))\to
\gal(L/K)).
\]
Let $f(x,Y)\in K[x,Y]$ be an irreducible polynomial that is
separable in $Y$, a root of which generates $F/K(x)$. Let $0\neq g(x)
\in K[x]$ be the product of the discriminant of $f$ over $K(x)$ and
its leading coefficient. Choose an irreducible specialization
$x\mapsto \alpha\in K$ of $f(x, Y)$ such that $g(\alpha)$ does not
vanish.

Extend this specialization to an $L$-place of $F$. Then $\Phi$ is
unramified over $K(x)$. Moreover, since
$f(\alpha,Y)$ is irreducible, we have that $[\Fgag:K] = [F:K(x)]$.
Thus the decomposition group equals $\gal(F/K(x))$, i.e., $\Phi^*$
is surjective.
\end{proof}

\begin{remark}
\label{rem:free-no-hil}
In the above result, it is necessary that the solutions are
geometric. Indeed, there exists a non-Hilbetian field $K$ whose
absolute Galois group is $\omega$-free (i.e., every finite
embedding problem is solvable): Let $K_0 = \bbC(x)$, then
$\gal(K_0)$ is an infinitely generated free group. The restriction
map $\alpha\colon \gal(K_0((x)) ) \to \gal(K_0)$ splits
(\cite[Corollary~4.1(e)]{GeyerJarden1988}). Let $K$ be the fixed
field of the image of a section of $\alpha$. Then $\gal(K) \cong
\gal(K_0)$ is free of infinite rank. But $K$ is Henselian as an
algebraic extension of the Henselian field $K_0((x))$. By
\cite[Lemma~15.5.4]{FriedJarden2005} $K$ is not Hilbertian.
\end{remark}

We wish to characterize fully Hilbertian fields in terms of geometric solutions of rational embedding problems. In the meanwhile we shall define \emph{Galois fully Hilbertian} fields to be fields with the
proper amount of geometric solutions to a rational embedding problem. Moreover we shall require that the solutions are independent in the following sense. Then we shall prove that Galois fully Hilbertian fields, regular fully Hilbertian fields and fully Hilbertian fields are the same.

\begin{definition}
We call a family of solutions $\{\theta_i\mid i\in I\}$ of a non-trivial
embedding problem \eqref{eq:geometricep} \textbf{independent} (or
\textbf{linearly disjoint}) if the solution fields, i.e.\ the fixed fields of $\ker \theta_i$, 
are linearly disjoint over $L$.
\end{definition}

\begin{remark}
The name derives from the group theoretic analog, cf.\
\cite[Definition~2.3]{Bary-SorokerHaranHarbater}.
\end{remark}

Before continuing, we strengthen Lemma~\ref{lem:basic-rat-eps} and show that independency of solutions is preserved under mild changes of the embedding problems.

\begin{lemma}
\label{lem:basic-rat-eps_2}
In the notation of Lemma~\ref{lem:basic-rat-eps} the following extra assertions hold:
\begin{enumerate}
\item Let $\{\Phi_i^* \mid i \in I\}$ be a family of independent geometric solutions of $(\nu,\alpha)$. Then $\{\gamma \theta_i \mid i\in I\}$ is a family of independent geometric solutions of $(\nu,\beta)$.
\item
 If $\{\theta_i \mid i\in I\}$ is a
set of independent geometric solutions of $(\mu,\beta)$, then
$\{\epsilon \theta_i \mid i\in I\}$ is a set of independent
geometric solutions of $(\nu,\alpha)$.
\end{enumerate}

\end{lemma}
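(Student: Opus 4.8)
The plan is to leverage Lemma~\ref{lem:basic-rat-eps} as a black box and simply track what happens to the solution fields under the two operations described there, namely (1) pushing a solution forward along a quotient $\gamma\colon \gal(F/K(x))\to B$, and (2) pulling a solution back along the quotient $\epsilon\colon C\cong\gal(FN/K(x))\to\gal(F/K(x))$ of a fiber product with $\gal(N/K)$. In both cases the individual statements (that $\gamma\theta_i$, resp.\ $\epsilon\theta_i$, is again a geometric weak solution, and that it is proper) are already contained in Lemma~\ref{lem:basic-rat-eps}; the only new content here is the \emph{independence} of the family, i.e.\ that linear disjointness of the solution fields over $L$ is preserved.

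For part (1): write $M_i$ for the fixed field of $\ker\theta_i\subseteq\gal(K)$ (so $M_i$ is the solution field of $\Phi_i^*$), and $M_i'$ for the fixed field of $\ker(\gamma\theta_i)$. Since $\gamma$ is a surjection, $\ker(\gamma\theta_i)\supseteq\ker\theta_i$, so $M_i'\subseteq M_i$, and $M_i'$ is the subfield of $M_i$ fixed by the image of $\ker\gamma$ under $\theta_i$. The point to verify is that if the $M_i$ are linearly disjoint over $L$ then so are the $M_i'$; but subfields of linearly disjoint fields (all containing $L$) are again linearly disjoint over $L$, since linear disjointness of a family over $L$ is equivalent to the compositum being the ``free'' compositum, and passing to subfields only shrinks it. (Concretely: $\gal(\prod_i M_i/L)=\prod_i\gal(M_i/L)$ implies the analogous statement for the $M_i'$ by restricting.) One should note $\gamma\theta_i$ is still a solution of a \emph{non-trivial} embedding problem only if $(\nu,\beta)$ is non-trivial; if $\beta$ is an isomorphism the independence statement is vacuous, as in the remark after Definition~\ref{def:ful-hil-gen}, so there is nothing to prove in that degenerate case.

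For part (2): now $\epsilon\colon C\to\gal(F/K(x))$ with $\ker\epsilon\cong\ker(\gal(N/K)\to\gal(L/K))$, and by Lemma~\ref{lem:basic-rat-eps}\eqref{case:fib-pro-geo-sol} if $\theta_i=\Phi_i^*$ with $\Phi_i$ an $N$-place of $FN$ then $\epsilon\theta_i=(\Phi_i|_F)^*$. Let $M_i\subseteq\gal(K)$-fixed-field be the solution field of $\theta_i$ (for the embedding problem $(\mu,\beta)$ into $\gal(N/K)$, so $M_i\supseteq N\supseteq L$), and let $M_i''$ be the solution field of $\epsilon\theta_i$. Here $\ker(\epsilon\theta_i)\supseteq\ker\theta_i$, so again $M_i''\subseteq M_i$, and the same ``subfields of linearly disjoint fields are linearly disjoint over the common base'' argument applies — but one must be slightly careful about the base field: the $M_i$ are linearly disjoint over $N$ (the bottom of $(\mu,\beta)$), while we want the $M_i''$ linearly disjoint over $L$. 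Since $N/L$ is a fixed finite Galois extension and $M_i''\supseteq L$, one checks that linear disjointness of the $M_i$ over $N$ forces linear disjointness of the $M_i''$ over $L$: indeed $M_i''\cap N$ is a subextension of $N/L$, and because each $\theta_i$ is a \emph{proper} solution its restriction to $\gal(N/K)$ is the identity, which pins down $M_i''\cap N = L$ exactly (this is where properness, guaranteed by Lemma~\ref{lem:basic-rat-eps}, is used), and then disjointness over $N$ upstairs plus $M_i''\cap N=L$ gives disjointness over $L$ by the standard tower criterion for linear disjointness.

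The main obstacle I anticipate is purely bookkeeping rather than conceptual: matching up the various fixed fields and base fields correctly across the fiber-product identification $C\cong\gal(FN/K(x))$, and making sure that in part (2) the relevant ``common base'' over which one has disjointness really does descend from $N$ to $L$ using properness of the solutions — a hypothesis that is in force because $\{\theta_i\}$ is assumed to be a family of \emph{solutions} (proper), not merely weak solutions. Once the dictionary between group-theoretic kernels and field-theoretic fixed fields is set up, both parts reduce to the elementary fact that a family of fields linearly disjoint over a base $C_0$ remains linearly disjoint over any subfield $C_1\subseteq C_0$ contained in all of them, together with the tower lemma for linear disjointness; neither requires any new geometric input beyond what Lemma~\ref{lem:basic-rat-eps} already supplies.
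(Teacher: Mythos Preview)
Your argument for part (1) is the same as the paper's: the solution field of $\gamma\theta_i$ is contained in that of $\theta_i$, and subfamilies of linearly disjoint families remain linearly disjoint over the same base. For part (2), however, you take a genuinely different route. The paper invokes the group-theoretic characterisation of independence from \cite[Lemmas~2.4 and 2.5]{Bary-SorokerHaranHarbater}: independence of $\{\theta_i\}$ means that the image of $\theta_{i_1}\times\cdots\times\theta_{i_n}$ is the $n$-fold fiber product $C^n_B$, and those lemmas then give directly that the image of $\epsilon\theta_{i_1}\times\cdots\times\epsilon\theta_{i_n}$ is $G^n_A$. You instead argue field-theoretically via intersections: from $M_i''\cap N=L$ and $M_i''\subseteq M_i$ one gets $M_j''\cap\prod_{i\neq j}M_i''\subseteq M_j\cap\prod_{i\neq j}M_i=N$ and also $\subseteq M_j''\cap N=L$, which for Galois extensions yields linear disjointness over $L$. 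Your approach is more self-contained (no external citation), while the paper's is purely group-theoretic and transfers verbatim to the abstract profinite setting of semi-free groups. One small point to tighten: the sentence ``its restriction to $\gal(N/K)$ is the identity'' is not quite the reason $M_i''\cap N=L$; rather, this equality follows from surjectivity of $\theta_i$ onto $C\cong G\times_A B$ together with $\ker\epsilon\cdot\ker\delta=\ker(C\to A)$, so that the subgroup of $\gal(M_i/K)\cong C$ fixing $M_i''\cap N$ is exactly $\ker(C\to A)$.
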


\begin{proof}
We start with the first case. Let $\Fgag_i$ be the residue field of $\Phi_i$. By assumptions, $\{\Fgag_i \mid i\in I\}$ is a linearly disjoint set over $L$. Let $\Egag_i$ be the residue field of $F_0$ under $\Phi_i|_{F_0}$. (Recall that $B\cong \gal(F_0/K(x))$.) Then $\Egag_i\subseteq \Fgag_i$, and hence $\{\Egag_i\mid i\in I\}$ is a linearly disjoint set over $L$. Thus, by definition, $\gamma\Phi^* = (\Phi_i|_{F_0})^*$ are independent solutions of $(\nu,\beta)$.

Next we prove the second case. Denote by $G,B,A$ the groups $\gal(F/K(x))$, $\gal(N/K)$, $\gal(L/K)$, respectively.
By \cite[Lemma~2.5]{Bary-SorokerHaranHarbater}, for any $i_1,\ldots, i_n\in I$,  the image of $\theta_{i_1}\times\cdots\times \theta_{i_n}$ is the fiber product $C^n_B$ of $n$ copies of $C$ over $B$.
Hence by \cite[Lemma~2.4]{Bary-SorokerHaranHarbater} the image of the corresponding $\epsilon\theta_{i_1}\times\cdots\times \epsilon\theta_{i_n}$ equals the fiber product $G^n_A$ of $n$ copies of $G$ over $A$, and thus $\{\epsilon\theta_i \mid i\in I\}$ is an independent set of solutions, by \cite[Lemma~2.5]{Bary-SorokerHaranHarbater}. By Lemma~\ref{lem:basic-rat-eps} these solutions are geometric.
\end{proof}

\begin{definition}
A field $K$ is called \textbf{Galois fully Hilbertian} if every non-trivial rational
finite embedding problem for $K$ has $|K|$ independent geometric solutions.
\end{definition}

\begin{remark}
Note that the definition of Galois fully Hilbertian is equivalent to saying that the property of fully Hilbertian fields holds for all irreducible $f(X,Y)\in K[X,Y]$ that are Galois over $K(X)$.
\end{remark}

\begin{remark}
Finite fields are not Galois fully Hilbertian, since the absolute Galois group of a finite field is pro-cyclic.
\end{remark}

The requirement on the solutions in the definition of Galois fully Hilbertian fields can be weakened, as we show in the next lemma.

\begin{lemma}\label{lem:splitep}
A field $K$ is Galois fully Hilbertian if and only if every non-trivial rational finite split
embedding problem for $K$ has $|K|$ pairwise-independent geometric
solutions.
\end{lemma}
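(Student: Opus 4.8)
The forward implication is immediate, since an independent family of solutions is pairwise independent and a split embedding problem is an embedding problem. For the converse we may assume $K$ is infinite (both conditions fail over finite fields, whose absolute Galois group is procyclic, cf.\ the remark above), so that Lemma~\ref{lem:basic-rat-eps} applies. Assume from now on that every non-trivial rational finite \emph{split} embedding problem for $K$ has $|K|$ pairwise independent geometric solutions; we must show $K$ is Galois fully Hilbertian.

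First I would reduce to split problems. Let $(\nu,\alpha)$, with $\alpha\colon\gal(F/K(x))\to\gal(L/K)$, be a non-trivial rational finite embedding problem. By Lemma~\ref{lem:basic-rat-eps}\eqref{case:fin-pro-geo-sol-split} there is a finite separable extension $N_0/K$; choose a finite Galois extension $N$ of $K$ with $N\supseteq N_0L$. In the notation of Lemma~\ref{lem:basic-rat-eps}\eqref{case:fib-pro-ep-is-rat}, the problem $(\mu,\delta)$ with $\delta\colon C=\gal(FN/K(x))\to\gal(N/K)$ is rational and finite, splits by Lemma~\ref{lem:basic-rat-eps}\eqref{case:fin-pro-geo-sol-split}, and is non-trivial because $\ker\delta\cong\ker\alpha\neq 1$; and by the second assertion of Lemma~\ref{lem:basic-rat-eps_2} the map $\epsilon$ carries any independent family of geometric solutions of $(\mu,\delta)$ to one of $(\nu,\alpha)$ of the same cardinality. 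Hence it will suffice to prove that an \emph{arbitrary} non-trivial rational finite split embedding problem $\calE'=(\mu\colon\gal(K)\to\gal(N/K),\ \delta\colon\gal(F'/K(x))\to\gal(N/K))$, with $N=F'\cap K_s$, has $|K|$ independent geometric solutions.

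So let $\calE'$ be such a problem. By hypothesis it has a family $\{\theta_i\mid i\in I\}$ of pairwise independent geometric solutions with $|I|=|K|=:m$; write $N_i$ for the solution field of $\theta_i$, a Galois extension of $K$ containing $N$. Non-triviality gives $N_i\supsetneq N$, and pairwise linear disjointness over $N$ gives $N_i\neq N_{i'}$ for $i\neq i'$. I would now select, by transfinite recursion, an injection $j\mapsto i(j)$, $j<m$, so that $\{\theta_{i(j)}\mid j<m\}$ is independent. Since linear disjointness is a finitary property, it is enough to arrange that at each stage $j$, for every finite $S\subseteq\{i(j')\mid j'<j\}$, the family $\{\theta_i\mid i\in S\}\cup\{\theta_{i(j)}\}$ is independent. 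For a fixed finite $S$, let $M_S$ be the compositum of $\{N_i\mid i\in S\}$, a finite extension of $N$; since all the fields involved are Galois over $K$, a solution $\theta_i$ of $\calE'$ fails independence relative to $S$ exactly when $N_i$ contains a member of the finite set $\calF_S$ consisting of the Galois-over-$K$ closures of the fields $N'$ with $N\subsetneq N'\subseteq M_S$, and for a fixed $P\in\calF_S$ at most one $i\in I$ can have $N_i\supseteq P$ (two such would give $N_i\cap N_{i'}\supseteq P\supsetneq N$, contradicting pairwise linear disjointness over $N$). There are at most $|j|<m$ such finite sets $S$, so only fewer than $m$ indices in $I$ are forbidden or already used, and one may pick $i(j)\in I$ meeting all requirements. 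A finite subfamily of the resulting $\{\theta_{i(j)}\mid j<m\}$ either lies entirely in an earlier stage — hence is independent by induction — or has a member $\theta_{i(j_0)}$ of largest index, and independence was arranged at stage $j_0$; so the whole family is independent, and each of its members is geometric, being one of the $\theta_i$. With the reduction above, this shows $K$ is Galois fully Hilbertian.

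The reduction step is routine bookkeeping with Lemmas~\ref{lem:basic-rat-eps} and \ref{lem:basic-rat-eps_2}. The real obstacle is the recursion in the previous paragraph at stages $j$ with infinitely many earlier solutions: one cannot meet infinitely many independence constraints by a single invocation of the hypothesis, and the way around it is the counting argument above, which genuinely uses pairwise independence of the pool — that is exactly what makes ``$N_i\supseteq P$'' hold for at most one $i$, so that fewer than $m$ indices are ever excluded. The remaining point to check carefully is the translation of independence into the statement that the image of every finite subtuple of the chosen solutions is the appropriate fibre product, as in \cite[Lemma~2.5]{Bary-SorokerHaranHarbater} and the proof of Lemma~\ref{lem:basic-rat-eps_2}; this whole passage is the field-theoretic counterpart of the argument deducing independence from pairwise independence for semi-free profinite groups in \cite{Bary-SorokerHaranHarbater}.
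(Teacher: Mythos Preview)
Your proof is correct and follows the same two-step strategy as the paper: first reduce an arbitrary non-trivial rational finite embedding problem to a split one via the fibre-product construction of Lemma~\ref{lem:basic-rat-eps}\eqref{case:fib-pro-ep-is-rat},\eqref{case:fin-pro-geo-sol-split} together with Lemma~\ref{lem:basic-rat-eps_2}, and then upgrade a pairwise-independent family of size $|K|$ to an independent one of the same size. The paper dispatches the second step in one line by citing \cite[Proposition~3.6]{Bary-SorokerHaranHarbater}; you instead reprove that proposition in the present field-theoretic language via the transfinite selection argument, which is exactly the content of that reference (as you yourself note). So the approaches coincide, with your version self-contained rather than quoting the group-theoretic result.
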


\begin{proof}
We may assume that $K$ is infinite. Consider a rational embedding problem
\[
\xymatrix{
    &\gal(K)\ar[d]^{\nu}\\
\gal(F/K(x))\ar[r]^{\alpha}
    &\gal(L/K).
}
\]
Let $N_0$ be as in Lemma~\ref{lem:basic-rat-eps}\eqref{case:fin-pro-geo-sol-split}. Choose some finite Galois extension $N/K$ that contains $LN_0$. Then Lemma~\ref{lem:basic-rat-eps_2} implies that it suffices to find a set of $m$ independent geometric solutions of the rational finite split embedding problem $(\mu,\beta)$, where $\mu\colon \gal(K)\to \gal(N/K)$ is the restriction map, and
\[
\beta\colon \gal(F/K(x))\times_{\gal(L/K)}  \gal(N/K) \to \gal(N/K)
\]
is the quotient map of the fiber product.

By assumption there exists a set of pairwise-independent geometric solutions $\{\theta_i\mid i\in I\}$ of $(\mu,\beta)$ with $|I|=|K|$. By \cite[Proposition~3.6]{Bary-SorokerHaranHarbater} there exists a subset $I'\subseteq I$ of the same cardinality $|I'|=|K|$ for which  $\{\theta_i\mid i\in I'\}$ is an independent set of geometric solutions, as needed.
\end{proof}

Standard set theoretic considerations show that it suffices to solve infinite embedding problems of cardinality $< |K|$.

\begin{lemma}\label{lem:<m_ep}
For a field $K$ to be Galois fully Hilbertian it suffices that any
rational embedding problem \eqref{eq:geometricep} in which
$\rank\gal(F/K(x)) < |K|$ and $\ker \alpha$ is finite is geometrically solvable.
\end{lemma}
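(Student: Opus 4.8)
The plan is to reduce an arbitrary non-trivial rational finite embedding problem to one in which the upper group has rank $<|K|$, by an ordinal induction that builds up the solution fields one at a time, at each stage only ``using up'' fewer than $|K|$ of the already-constructed solutions. Concretely, suppose we want to produce $|K|$ independent geometric solutions of a given non-trivial rational finite embedding problem $(\nu,\alpha)$ with $\alpha\colon\gal(F/K(x))\to\gal(L/K)$, and suppose we already know that rational embedding problems with finite kernel and upper group of rank $<|K|$ are geometrically solvable. Enumerate the solutions to be constructed by the ordinals $<|K|$. At stage $\xi<|K|$ we have constructed independent geometric solutions $\{\theta_\eta\mid\eta<\xi\}$ with solution fields $F_\eta/K$; let $N_\xi$ be the compositum of $L$ together with all the $F_\eta$, $\eta<\xi$. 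Since each $F_\eta$ is a finite extension of $K$ and $|\xi|<|K|$, we have $\rank\gal(N_\xi/K)\le |\xi|\cdot\aleph_0<|K|$, so $N_\xi/K$ is a (possibly infinite) Galois extension whose Galois group has rank $<|K|$.

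The key step is then to form the fiber-product embedding problem over $N_\xi$. Apply Lemma~\ref{lem:basic-rat-eps}\eqref{case:fib-pro-ep-is-rat} with $N=N_\xi$: we obtain a rational embedding problem $(\mu_\xi\colon\gal(K)\to\gal(N_\xi/K),\ \delta_\xi\colon\gal(FN_\xi/K(x))\to\gal(N_\xi/K))$, whose kernel $\ker\delta_\xi\cong\ker\alpha$ is finite, and whose upper group $\gal(FN_\xi/K(x))\cong\gal(F/K(x))\times_{\gal(L/K)}\gal(N_\xi/K)$ has rank $<|K|$ (a finite-kernel extension of a group of rank $<|K|$ still has rank $<|K|$). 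By hypothesis this embedding problem is geometrically solvable; let $\Phi_\xi^*$ be a geometric solution, and let $\theta_\xi=\epsilon_\xi\Phi_\xi^*$ be the induced geometric weak solution of $(\nu,\alpha)$ via Lemma~\ref{lem:basic-rat-eps}\eqref{case:fib-pro-geo-sol}. Because $\Phi_\xi^*$ is surjective onto $\gal(FN_\xi/K(x))$ — in particular its composition with the projection to $\gal(N_\xi/K)$ is $\mu_\xi$, which is onto — the solution field $F_\xi$ of $\theta_\xi$ satisfies $F_\xi\cap N_\xi=L$ (this is exactly what surjectivity of $\Phi_\xi^*$ onto the fiber product encodes). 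Hence $F_\xi$ is linearly disjoint from $N_\xi$ over $L$, i.e.\ $\{\theta_\eta\mid\eta\le\xi\}$ is again an independent family of geometric solutions (and each $\theta_\xi$ is a \emph{proper} solution of the original problem, since $F_\xi\cap N_\xi=L\supseteq L$ forces the decomposition group to be all of $\gal(F/K(x))$, using that the original problem is non-trivial so that $F_\xi\supsetneq L$). Continuing through all $\xi<|K|$ yields the desired $|K|$ independent geometric solutions.

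The main obstacle — and the point requiring care — is verifying that surjectivity of the fiber-product solution $\Phi_\xi^*$ genuinely delivers the linear-disjointness increment, i.e.\ that $\epsilon_\xi\Phi_\xi^*$ is not merely a weak solution but is independent from all previously constructed solutions. This is essentially the content already packaged in Lemma~\ref{lem:basic-rat-eps_2}: a geometric solution of $(\mu_\xi,\delta_\xi)$ restricts to an independent solution of $(\nu,\alpha)$ relative to $N_\xi$. One should also check the cardinality bookkeeping: $\rank\gal(N_\xi/K)<|K|$ uses that $|K|$ is infinite (the countable case being immediate, as a countable field is Galois fully Hilbertian iff it is Hilbertian, via Proposition~\ref{prop:hilbertian-ep}) and that a union of $<|K|$ finite extensions is generated by $<|K|$ elements. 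Finally, the reduction to non-trivial split problems via Lemma~\ref{lem:splitep} and the passage from pairwise-independent to independent families is already available, so no extra work is needed there. Thus the stated sufficiency follows.
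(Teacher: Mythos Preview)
Your proposal is correct and takes essentially the same approach as the paper: both argue by transfinite induction over ordinals $<|K|$, at each step forming the fiber-product rational embedding problem over the compositum $N$ of the previously constructed solution fields (noting this has finite kernel and upper group of rank $<|K|$), invoking the hypothesis to solve it geometrically, and then extracting linear disjointness of the new solution field from $N$ over $L$ out of surjectivity onto the fiber product. The paper makes the last step explicit via the degree computation $[NN_i:N]=|B|/|A|=[N_i:L]$, whereas you phrase it as ``what surjectivity of $\Phi_\xi^*$ onto the fiber product encodes'', but the argument is the same.
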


\begin{proof}
Consider a rational finite embedding problem
\[
(\nu\colon \gal(K) \to \gal(L/K), \alpha\colon \gal(F/K(x))\to
\gal(L/K))
\]
for $K$ and write $B = \gal(F/K(x))$ and $A = \gal(L/K)$. In
particular $F$ is regular over $L$.

For each $i<m$ we inductively construct a geometric solution
$\theta_i$ such that the corresponding solution field $N_i$ is
linearly disjoint of the compositum $\prod_{j<i} N_j$ over $L$. This
would imply that $\{\theta_i \mid i< m \}$ is a family of independent
geometric solutions, and hence the proof.

Assume we have constructed geometric solutions $\theta_j$ for each
$j < i$ that are independent. Let $N_j$ be the solution field of
$\theta_j$, let $B_j= \gal(N_j/K)$. Then $B \cong B_j$.

Write $N = \prod_{j < i} N_j$ and let $\nu_i \colon \gal(K) \to
\gal(N/K)$ be the restriction map. Since the $\{\theta_j\mid j<i\}$ are
independent it follows that $\gal(N/K)$ is canonically isomorphic to
the fiber product $\prod_A B_j$ of $B_j$ over $A$, where
$j<i$. The isomorphism is given by $\sigma \mapsto (
(\sigma|_{N_j}))_{j<i}$ \cite[Lemma~2.5]{Bary-SorokerHaranHarbater}.

Moreover, since $F$ is regular over $L$, it is linearly disjoint of
$N$, and hence of $N(x)$ over $L(x)$. Hence
\[
\gal(FN/K(x)) \cong \gal(F/K(x))\times_{\gal(L/K)} \gal(N/K) \cong
B\times_A (\prod_A B_j),
\]
so $\rank \gal(FN/K(x)) \leq i+1 <m$. Let $\alpha_i \colon
\gal(FN/K(x)) \to \gal(N/K)$ be the restriction map.

The assumption gives a geometric solution $\Phi^* \colon \gal(K) \to
\gal(FN/K(x))$ of the rational embedding problem
\[
(\nu_i \colon \gal(K) \to \gal(N/K), \alpha_i \colon \gal(FN/K(x))
\to \gal(N/K)).
\]
Let $M$ and $N_i$ be the residue fields of $FN$ and $F$ under
$\Phi$, respectively, and let $\phi_i = \Phi|_{F}$.

Then $\theta_i = \phi_i^*\colon \gal(K) \to \gal(F/K(x))$ is a
geometric solution. Since the residue field of $N(x)$ is $N$ and
$\Phi^*$ is an isomorphism, we have $[M:N] = [FN : N(x)] = |B|/|A|$.
But $M = NN_i$; so $[NN_i : N] = |B|/|A| =
[N_i:L]$. Therefore $N_i$ is linearly disjoint of $N$ over $L$, as
required.
\end{proof}

We are now ready to prove that the notions of Galois fully Hilbertian fields, regular fully Hilbertian fields, and fully Hilbertian fields are equivalent.

\begin{theorem}
\label{thm:equiv-def}
Let $K$ be a field. The following conditions are equivalent.
\begin{enumerate}
\item $K$ is fully Hilbertian.
\item $K$ is regular fully Hilbertian.
\item $K$ is Galois fully Hilbertian.
\end{enumerate}
\end{theorem}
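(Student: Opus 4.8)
The plan is to prove the chain of implications (c) $\Rightarrow$ (a) $\Rightarrow$ (b) $\Rightarrow$ (c), using the dictionary between irreducible polynomials, their specializations, and geometric solutions of rational embedding problems developed above. We may assume $K$ is infinite throughout, since the countable case is classical (a countable Hilbertian field is fully Hilbertian, and the converses are trivial there).

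First I would prove (c) $\Rightarrow$ (a). Let $f(X,Y)\in K[X,Y]$ be irreducible and separable in $Y$, put $E = K(X)[Y]/(f(X,Y))$ and $L = E\cap K_s$. Let $F$ be the Galois closure of $E/K(X)$; then $F\cap K_s = L$ as well, so the restriction map $\alpha\colon\gal(F/K(X))\to\gal(L/K)$ together with $\nu\colon\gal(K)\to\gal(L/K)$ is a non-trivial rational finite embedding problem (non-trivial unless $E=L(X)$, which is the trivial case handled by the remark after Definition~\ref{def:ful-hil-gen}). By (c) it has a family $\{\Phi_i^*\mid i\in I\}$ of $|K|$ independent geometric solutions; each $\Phi_i$ is an unramified $L$-place of $F$ with $\overline{K(X)}_{\phi_i} = K$, say sending $X\mapsto a_i\in K$, and surjectivity of $\Phi_i^*$ forces $[\Fgag_i:K]=[F:K(X)]$. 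Restricting $\Phi_i$ to $E$ gives a specialized field $E_{a_i}\subseteq\Fgag_i$ with $[E_{a_i}:K]=[E:K(X)]=\deg_Y f$, so $a_i\in H(f)$. The solution fields $\Fgag_i$ are linearly disjoint over $L$; since $E_{a_i}\subseteq\Fgag_i$, the $E_{a_i}$ are linearly disjoint over $L$ too. After discarding the (at most $|K|$, hence harmless) repetitions among the $a_i$ — distinct $i$ can a priori give the same $a_i$ only finitely often since a place is determined by its restriction to $K(X)$ together with a choice in $\gal(L)$, and $\gal(L)$ acts with orbits of bounded size — we extract $A\subseteq H(f)$ of cardinality $|K|$ with the $E_a$ linearly disjoint over $L$. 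This is exactly Definition~\ref{def:ful-hil-gen}.

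Next, (a) $\Rightarrow$ (b) is nearly immediate: an absolutely irreducible $f$ has $L=K$ in the notation above, so the specialized fields $E_a = K(b_a)$ for a root $b_a$ of $f(a,Y)$, and "linearly disjoint over $L=K$" is precisely the conclusion of Definition~\ref{def:ful-hil-abs-irr}; one only needs to observe that one may shrink $A$ to lie inside $H'(f)$ (co-finite in $H(f)$) so that $b_a$ is honestly a root of $f(a,Y)$ of the right degree.

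The substantive implication, and the main obstacle, is (b) $\Rightarrow$ (c). By Lemma~\ref{lem:<m_ep} it suffices to solve, geometrically, every rational embedding problem $(\nu\colon\gal(K)\to\gal(L/K),\ \alpha\colon\gal(F/K(x))\to\gal(L/K))$ with $\ker\alpha$ finite and $\rank\gal(F/K(x))<|K|$; and by Lemma~\ref{lem:splitep} together with Lemma~\ref{lem:basic-rat-eps}\eqref{case:fin-pro-geo-sol-split} and Lemma~\ref{lem:basic-rat-eps_2} we may further assume the embedding problem is split. The idea, following \cite{Bary-Soroker2008IMRN}, is to realize such a split embedding problem inside an \emph{absolutely irreducible} polynomial: using that $F/L(x)$ is a regular Galois extension with a complement, one produces an absolutely irreducible $g(X,Y)\in K[X,Y]$ (over a well-chosen rational base point, via a twisting/base-change argument exploiting the splitting) whose independent specializations provided by (b) translate back, through Lemma~\ref{lem:basic-rat-eps_2}, into independent geometric solutions of the original problem — indeed into $|K|$ of them, then refined to a genuinely independent (not merely pairwise-independent) family by \cite[Proposition~3.6]{Bary-SorokerHaranHarbater}. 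Concretely, each specialization point $a\in H(g)$ gives an unramified $K$-place, hence a geometric weak solution that is surjective because $g(a,Y)$ is irreducible of full degree; linear disjointness of the $K(b_a)$ upgrades, via the fiber-product criterion \cite[Lemmas~2.4 and~2.5]{Bary-SorokerHaranHarbater} used in the proof of Lemma~\ref{lem:basic-rat-eps_2}, to independence of the solutions. The delicate point — the one I expect to require real care — is the construction of $g$: one must encode the split embedding problem into a single absolutely irreducible polynomial in two variables in such a way that irreducible specializations of $g$ correspond to \emph{surjective} (not merely weak) solutions of the embedding problem, and that linear disjointness of root fields of the specializations is equivalent to independence of the resulting solution fields over $L$. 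This is where the hypothesis of splitness is essential (it provides the section needed to glue the geometric fiber over the chosen base point into an absolutely irreducible curve), and it is the analogue of the corresponding reduction in the classical characterization of Hilbertian fields; I would model the argument closely on \cite[\S2]{Bary-Soroker2008IMRN}, checking at each step that the constructions are "field-theoretic", i.e. respect places, so that geometric solutions are produced rather than merely abstract ones.
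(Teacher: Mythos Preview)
Your overall architecture matches the paper's — the same cycle (a) $\Rightarrow$ (b) $\Rightarrow$ (c) $\Rightarrow$ (a), with (b) $\Rightarrow$ (c) as the substantive step via an absolutely irreducible polynomial coming from a section of a split embedding problem. Two points deserve correction.

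\textbf{A genuine gap in (c) $\Rightarrow$ (a).} Your claim that the Galois closure $F$ of $E/K(X)$ satisfies $F\cap K_s = L$ (with $L = E\cap K_s$) is false in general: take $K=\bbQ$ and $E=\bbQ(X)(\sqrt[3]{X+1})$, where $E\cap\bbQgal=\bbQ$ but the Galois closure picks up $\zeta_3$. The independent geometric solutions you obtain have solution fields linearly disjoint over $F\cap K_s$, not over $E\cap K_s$; the passage from the former to ``the $E_{a_i}$ are linearly disjoint over $L$'' is therefore unjustified as written. The paper handles this by keeping track of both constant fields $L_0=E\cap K_s$ and $L=F\cap K_s$, using the regularity of $E/L_0$ to get $[EL:L]=[E:L_0]$, and then running a degree count
\[
[E:L_0]^{|J|}\ \ge\ \bigl[\textstyle\prod_{i\in J}\Egag_i:L_0\bigr]\ \ge\ \bigl[\textstyle\prod_{i\in J}\Egag_iL:L\bigr]\ =\ [EL:L]^{|J|}\ =\ [E:L_0]^{|J|}
\]
to force equality and hence linear disjointness over $L_0$. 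Your argument can be repaired along these lines, but not by the one-line containment you wrote.

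\textbf{On (b) $\Rightarrow$ (c).} Your reduction path is slightly tangled: invoking Lemma~\ref{lem:<m_ep} lands you in rational embedding problems of possibly infinite rank (with finite kernel), where the ``absolutely irreducible polynomial from the section'' trick is not directly available, and Lemma~\ref{lem:splitep} does not combine with it the way you suggest. The paper instead uses only Lemma~\ref{lem:splitep} to reduce to a single \emph{finite} split rational embedding problem, takes a maximal independent family of geometric solutions, and derives a contradiction if its size is $<|K|$. The ``delicate point'' you anticipate is then rather short: the section's fixed field $E$ is regular over $K$, giving an absolutely irreducible $f$; applying (b) yields $|K|$ many $E_a$ linearly disjoint over $K$; since the compositum $N$ of the existing solution fields has fewer than $|K|$ finite subextensions, some $E_a$ satisfies $E_a\cap N=K$, whence $\Fgag=E_aL$ has full degree (surjectivity) and is linearly disjoint from $N$ over $L$ (independence), contradicting maximality. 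No elaborate encoding into a single polynomial $g$ beyond taking a primitive element of $E/K(x)$ is needed.
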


\begin{proof}
It is clear that full Hilbertianity implies regular full Hilbertianity.

Assume $K$ is regular fully Hilbertian. We prove that $K$ is Galois fully Hilbertian. Let
\[
(\nu\colon \gal(K) \to \gal(L/K), \alpha \colon \gal(F/K(x)) \to \gal(L/K))
\]
be a rational non-trivial finite split embedding problem. By Lemma~\ref{lem:splitep} it suffices to find $|K|$ independent geometric solutions of $(n_0,\alpha_0)$. By Zorn's lemma there exists a  maximal set of geometric independent solutions  $\{ \theta_i \mid i\in I\}$ of $(\nu, \alpha)$. We claim that $|I| = |K|$.
Assume otherwise, i.e., $|I|<|K|$.

Let $N$ be the compositum of the fixed fields of $\ker \theta_i$, $i\in I$. Since $|I|<|K|$ the rank of $\gal(N/K)$ is less than $|K|$. Thus
\begin{equation}
\label{eq:rank<|K|}
\mbox{
there exist at most $|I|$ finite subextensions of $N/K$. }
\end{equation}

By assumption $\alpha$ splits, let $\alpha'\colon \gal(L/K)\to \gal(F/K(x)$ be a section of $\alpha$ and let $E$ be the fixed field of the image of $\alpha'$. Then $E L = F$ and $E\cap L = K$. In particular $E/K$ is regular. Choose a primitive element $y\in E$ of $E/K(x)$ that is integral over $K[x]$. Then $E=K(x,y)$ and the irreducible polynomial $f(x,Y)\in K[x,Y]$ of $y$ over $K(x)$ is monic and absolutely irreducible.

Since $K$ is regular fully Hilbertian there exists a set $A\subseteq H(f)$ of cardinality $|A|=|K|$ and a choice of roots $\ygag_a$ of $f(a,Y)$ such that the fields $E_a=K(\ygag_a)$ are linearly disjoint over $K$. In particular $E_a\cap E_b = K$ for all $a\neq b$.

We claim that there exist infinitely many $a\in A$ such that  $E_a\cap N = K$, and hence $E_a$ and $N$ are linearly disjoint over $K$ (recall that $N/K$ is Galois). Otherwise, we would have that $E'_a = E_a \cap N\neq K$ (where $a$ runs on a co-finite subset of $A$) are $|K|$ finite subextensions of $N/K$. Since
\[
K\subseteq E'_a\cap E'_b \subseteq E_a\cap E_b \subseteq K,
\]
it follows that $E'_a\cap E'_b = K$, hence $E'_a$, $E'_b$ are distinct. This contradicts \eqref{eq:rank<|K|}.

Choose $a\in A$ such that $E_a$ and $N$ are linearly disjoint and if $\phi$ is a place of $F$ that lies over $(x-a)$, then $\Fgag = \overline{EL} = \Egag L = E_a L$. Since $E_a$ and $L$ are also linearly disjoint, we have
\[
[\Fgag:K] = [E_a L:K]=[E_a:K][L:K] = [E:K(x)] [L(x):K(x)] = [F:K(x)],
\]
hence $\phi^*\colon \gal(K) \to \gal(F/K(x))$ is surjective.
Now $E_a,N$ are linearly disjoint over $K$, hence $\Fgag$, $N$ are linearly disjoint over $L$. But this means that $\{\phi^*, \theta_i\}$ is a set of geometric independent solutions, a contradiction to the maximality  of $\{\theta_i\mid i\in I\}$.

It remains to prove that a Galois fully Hilbertian field $K$ is fully Hilbertian.
Let $f(X,Y)$ be an irreducible polynomial that is separable in $Y$. Let $x$ be a variable, and let $y$ be a root of $f(x,Y)$ in some fixed algebraic closure of $K(x)$. Let $E= K(x,y)$. Then $E/K(x)$ is a finite separable extension. Let $L_0=E\cap K_s$.

Choose a Galois extension $F/K(x)$ such that $E\subset F$ and let $L=F\cap K_s$. Then $EL\subseteq F$ and, since $E$ is regular over $L_0$,
\begin{equation}\label{eq:EL:L=E:L_0}
[EL:L] = [E:L_0].
\end{equation}
Since $K$ is Galois fully Hilbertian, it follows that there exists a family $\{\phi_i^*\mid i\in I\}$ of independent geometric solutions of the non-trivial finite rational embedding problem $(\mu \colon \gal(K) \to \gal(L/K), \alpha\colon \gal(F/K(x)) \to \gal(L/K))$, and $|I|=|K|$.
The set of $i\in I$ for which one of the following conditions is not satisfied is finite, hence we can assume that every $i\in I$ satisfies these conditions.
\begin{enumerate}
\item $a_i = \phi_i(x)$ and $b_i = \phi(y_i)$ are finite, for all $i\in I$.
\item $f(a_i,b_i) = 0$.
\item The residue field of $E$ under $\phi_i$ is $\Egag_i=K(b_i)$.
\item The residue field of $EL$ under $\phi_i$ is $\overline{EL}_i = \Egag_i L = L(b_i)$
\end{enumerate}
Since $\phi_i^*$ is surjective for all $i\in I$, we have $[\Fgag_i :K] = [F:K(x)]$, and hence $[\Rgag_i : \Sgag_i]$ for any tower $K(x)\subseteq S\subseteq R\subseteq F$. Hence to finish the proof, it suffices to prove that the $\Egag_i$ are linearly disjoint over $L_0$, for any finite subset $J$ of $I$.

Let $J$ be a finite subset of $I$. Since $\{\phi^*_i\mid i\in J\}$ are independent, the fields $\Fgag_i$, $i\in J$ are linearly disjoint over $L$. In particular $\Egag_i L$ are linearly disjoint over $L$. By \eqref{eq:EL:L=E:L_0} we have
\begin{eqnarray*}
[E:L_0]^{|J|} &=& \prod_{i\in J} [E:L_0] = \prod_{i\in J} [\Egag_i:L_0] \geq [\prod_{i\in J} \Egag_i :L_0] \geq [\prod_{i\in J} (\Egag_i L) : L] \\
	&=& \prod_{i\in J} [\Egag_i L : L] = \prod_{i\in J} [E L : L] = [EL:L]^{|J|} = [E:L_0]^{|J|}.
\end{eqnarray*}
We thus get that all inequalities are equalities, and thus $\Egag_i$ are linearly independent over $L_0$.
\end{proof}

For countable fields, Hilbertianty is the same as full Hilbertianity.
This is an immediate consequence of
Proposition~\ref{prop:hilbertian-ep} and Lemma~\ref{lem:<m_ep}.
\begin{corollary}
\label{cor:countableHilbertian}
Let $K$ be a countable field. Then $K$ is Hilbertian if and only if
$K$ is fully Hilbertian.
\end{corollary}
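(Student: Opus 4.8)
The plan is to derive Corollary~\ref{cor:countableHilbertian} directly from the machinery already assembled, using that the two implications in question only really have content in one direction. One direction is trivial: a fully Hilbertian field is Hilbertian by definition (take the set $A$ in Definition~\ref{def:ful-hil-gen} to be nonempty, which forces $H(f)\neq\emptyset$). So the whole content is: a countable Hilbertian field is fully Hilbertian. By Theorem~\ref{thm:equiv-def} it suffices to show that a countable Hilbertian field is Galois fully Hilbertian.

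First I would invoke Lemma~\ref{lem:<m_ep}: to prove $K$ is Galois fully Hilbertian it suffices to solve every rational embedding problem \eqref{eq:geometricep} for which $\rank\gal(F/K(x)) < |K|$ and $\ker\alpha$ is finite, geometrically. Now use that $K$ is countable, so $|K| = \aleph_0$; the condition $\rank\gal(F/K(x)) < \aleph_0$ just says $\gal(F/K(x))$ is finite, i.e.\ $F/K(x)$ is a finite extension, so these are precisely the rational \emph{finite} embedding problems. By Proposition~\ref{prop:hilbertian-ep}, since $K$ is Hilbertian, every rational embedding problem --- in particular every rational finite embedding problem --- is geometrically solvable. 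This is exactly the hypothesis required by Lemma~\ref{lem:<m_ep}, so $K$ is Galois fully Hilbertian, hence fully Hilbertian by Theorem~\ref{thm:equiv-def}.

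There is essentially no obstacle here; the only point requiring a line of care is matching the cardinality bookkeeping in Lemma~\ref{lem:<m_ep} with the countable case --- namely observing that when $|K| = \aleph_0$, ``$\rank < |K|$ and $\ker\alpha$ finite'' collapses to ``$F/K(x)$ finite'', so that Lemma~\ref{lem:<m_ep}'s sufficient condition is literally the statement of Proposition~\ref{prop:hilbertian-ep} restricted to finite rational embedding problems (and Proposition~\ref{prop:hilbertian-ep} gives it for all rational embedding problems, a fortiori). One should also note the harmless reductions already built into those statements: we may assume $K$ infinite throughout (the finite-field case is excluded since finite fields are not Hilbertian), and the geometric solutions produced in the inductive construction of Lemma~\ref{lem:<m_ep} automatically form a family of $|K|$ independent solutions. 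Thus the corollary follows by stringing together Proposition~\ref{prop:hilbertian-ep}, Lemma~\ref{lem:<m_ep}, and Theorem~\ref{thm:equiv-def}, with the trivial converse noted at the start.
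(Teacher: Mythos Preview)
Your proof is correct and follows essentially the same route as the paper: the authors simply remark that the corollary is an immediate consequence of Proposition~\ref{prop:hilbertian-ep} and Lemma~\ref{lem:<m_ep}, with Theorem~\ref{thm:equiv-def} implicitly bridging Galois fully Hilbertian to fully Hilbertian. Your write-up just spells out the cardinality bookkeeping and the trivial converse in more detail.
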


\subsection{The absolute Galois group of fully Hilbertian fields}
In this section we consider the group theoretic properties of the
absolute Galois group of fully Hilbertian fields.

Recall the definition of semi-free groups \cite{Bary-SorokerHaranHarbater}.

\begin{definition}
A profinite group $\Gamma$ of infinite rank $m$ is called \textbf{semi-free} if every finite split embedding problem for $\Gamma$ has a set of $m$ independent solutions.
\end{definition}

The following conjecture is the analog of Conjecture ``Split EP/$_{K hilb.}$" of \cite{DebesDeschamps1997}.

\begin{conjecture}
\label{conj:semi-free}
The absolute Galois group of a fully Hilbertian field $K$ is semi-free of rank $|K|$.
\end{conjecture}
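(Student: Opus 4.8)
The plan is to separate the statement into its rank assertion and its embedding–problem assertion, and to handle the second by reducing it to the machinery of Section~2. The rank assertion is unconditional. Since $K$ is fully Hilbertian it is infinite, so $K_s/K$ has at most $|K|$ finite subextensions; hence $\gal(K)$ has at most $|K|$ open subgroups, and lifting topological generators of each finite quotient $\gal(M/K)$ gives $\rank\gal(K)\le |K|$. For the reverse inequality, fix a non-trivial finite group realized regularly over $K(x)$ (for instance $\bbZ/2$, via $Y^2=x$ or $Y^2-Y=x$ according to the characteristic), giving a regular Galois extension $F/K(x)$ with $F\cap K_s=K$. By Theorem~\ref{thm:equiv-def}, $K$ is Galois fully Hilbertian, so the non-trivial rational finite embedding problem $(\nu\colon\gal(K)\to 1,\ \alpha\colon\gal(F/K(x))\to 1)$ has $|K|$ independent geometric solutions $\theta_i\colon\gal(K)\to\bbZ/2$; their solution fields are linearly disjoint over $K$, so their compositum is a Galois extension of $K$ whose group is a product of $|K|$ copies of $\bbZ/2$, which has rank $|K|$. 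Being a quotient of $\gal(K)$, this forces $\rank\gal(K)\ge|K|$, and hence $\rank\gal(K)=|K|$.

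For semi-freeness we must show that each non-trivial finite split embedding problem $(\nu\colon\gal(K)\to A,\ \alpha\colon B\to A)$ for $\gal(K)$ has $|K|$ independent proper solutions. The strategy is: (i) replace $(\nu,\alpha)$ by an \emph{equivalent rational} finite split embedding problem; (ii) apply Theorem~\ref{thm:equiv-def} together with Lemma~\ref{lem:splitep} to obtain $|K|$ pairwise-independent geometric solutions of this rational embedding problem; (iii) thin the family, as in the proof of Lemma~\ref{lem:splitep} (via \cite[Proposition~3.6]{Bary-SorokerHaranHarbater}), to a subfamily of cardinality $|K|$ that is independent; (iv) transport this family back along the equivalence of embedding problems, using Lemmas~\ref{lem:basic-rat-eps} and~\ref{lem:basic-rat-eps_2} for the fiber-product passages, to obtain $|K|$ independent solutions of $(\nu,\alpha)$. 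Steps (ii)--(iv) are exactly what Section~2 provides and require no further arithmetic input; in particular, full Hilbertianity is precisely the property that upgrades a \emph{single} geometric solution to a family of $|K|$ independent ones.

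The obstacle is step (i): one needs to know that every finite split embedding problem over a Hilbertian field is \emph{regularly} solvable, i.e.\ equivalent to a rational finite split embedding problem. This is a strengthening of the conjecture ``Split EP$/_{K\,\mathrm{hilb.}}$'' of D\'eb\`es--Deschamps \cite{DebesDeschamps1997} and is wide open --- it is not even known that a single non-trivial finite split embedding problem over an arbitrary Hilbertian field is solvable. Thus Conjecture~\ref{conj:semi-free} should be viewed as at least as hard as this conjecture, while the present paper shows it is not harder: granting regular solvability of finite split embedding problems, full Hilbertianity yields semi-freeness of the absolute Galois group of rank $|K|$. The one regime where step (i) is available is that of ample fields, by Pop's theorem \cite{Pop1996} (see the discussion following Lemma~\ref{lem:basic-rat-eps}); there the argument above is unconditional and proves that $\gal(K)$ is semi-free of rank $|K|$ for every fully Hilbertian ample field $K$, in agreement with the theorem stated in the introduction. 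A proof of the conjecture in general would require establishing step (i) for arbitrary Hilbertian (equivalently, fully Hilbertian) fields, perhaps by new valuation-theoretic methods for suitable subclasses.
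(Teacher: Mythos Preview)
Your proposal is not a proof, and you correctly recognize this: the statement is labeled a \emph{Conjecture} in the paper and is not proven there. The paper's own treatment consists exactly of your steps (ii)--(iv) packaged as Proposition~\ref{prop:Cond-semi-free} (assuming every finite split embedding problem is equivalent to a rational one, $\gal(K)$ is semi-free of rank $|K|$), together with Corollary~\ref{cor:f-Hil+ample=semi-free} for the ample case via Pop's theorem. Your analysis of the obstacle in step~(i) --- regular solvability of arbitrary finite split embedding problems over Hilbertian fields, tied to the D\`ebes--Deschamps conjecture --- matches the paper's discussion verbatim. Your unconditional argument for $\rank\gal(K)=|K|$ is also correct and parallels the paper's Proposition~\ref{prop:abelian-ker} and the proposition following it (you use $\bbZ/2$ directly rather than invoking Ikeda's theorem for general abelian kernels, which is a harmless simplification).

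In short: there is no gap in your reasoning beyond the one you yourself identify, and that gap is precisely why the paper states this as a conjecture rather than a theorem.
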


In \cite{DebesDeschamps1997} D\`ebes and Deschamps show that ``Split EP/$_{K hilb.}$" follows from the conjecture they call ``Split EP/$_{k(t)}$." We show that a slightly stronger property implies Conjecture~\ref{conj:semi-free}.

\begin{proposition}
\label{prop:Cond-semi-free}
Let $K$ be a fully Hilbertian field and assume that any finite split embedding problem for $K$ is equivalent to a rational embedding problem. Then $\gal(K)$ is semi-free of rank $|K|$.
\end{proposition}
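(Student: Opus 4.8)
The plan is to feed the hypothesis straight into Theorem~\ref{thm:equiv-def}: once a finite split embedding problem for $\gal(K)$ is replaced by an equivalent \emph{rational} one, Galois full Hilbertianity supplies the required number of independent geometric solutions. Write $m=|K|$. Since a fully Hilbertian field is infinite, $m$ is infinite, and before addressing embedding problems I would first record that $\rank\gal(K)=m$. For $\rank\gal(K)\le m$: the open subgroups of $\gal(K)$ correspond to the finite separable extensions of $K$ inside $K_s$, of which there are at most $|K_s|=m$; and a profinite group with at most $m$ open subgroups is topologically generated by $m$ elements (for each open normal $N$ pick a finite set of elements mapping onto a generating set of the finite group $\gal(K)/N$, and take the union of these sets over the at most $m$ choices of $N$), hence has rank $\le m$. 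For $\rank\gal(K)\ge m$: applying Theorem~\ref{thm:equiv-def} to a non-trivial rational finite embedding problem (e.g.\ the one attached to $K(\sqrt{x})/K(x)$, or its Artin--Schreier analogue in characteristic $2$) yields $m$ pairwise linearly disjoint proper extensions of $K$, so $\gal(K)$ has at least $m$ open subgroups; and since a free profinite group of infinite rank $r$ has only $r$ open subgroups (a continuous homomorphism to a finite group is trivial on all but finitely many of the free generators), a profinite group with at least $m$ open subgroups has rank at least $m$.

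It then remains to verify the embedding-problem clause in the definition of semi-free. Let $\mathcal{E}=(\nu\colon\gal(K)\to A,\ \alpha\colon B\to A)$ be a non-trivial finite split embedding problem for $\gal(K)$. By hypothesis $\mathcal{E}$ is equivalent to a rational embedding problem $\mathcal{E}'=(\nu'\colon\gal(K)\to\gal(L/K),\ \alpha'\colon\gal(F/K(x))\to\gal(L/K))$. An equivalence of embedding problems is an isomorphism of the two defining diagrams over $\gal(K)$; hence $\mathcal{E}'$ is again finite and non-trivial, one has $\ker\nu'=\ker\nu$ so the base field $L$ coincides with the one attached to $\mathcal{E}$, and the equivalence carries any family of solutions of $\mathcal{E}'$ to a family of solutions of $\mathcal{E}$ with literally the same kernels, hence the same solution fields, so it sends independent families to independent families. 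Now $K$ is Galois fully Hilbertian by Theorem~\ref{thm:equiv-def}, so $\mathcal{E}'$ has $m$ independent geometric solutions, which are in particular $m$ independent solutions; transporting them through the equivalence gives $m$ independent solutions of $\mathcal{E}$. Since $m=\rank\gal(K)$, this is exactly the statement that $\gal(K)$ is semi-free of rank $|K|$.

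I do not expect a genuine obstacle: the substantive input — that full Hilbertianity is equivalent to non-trivial rational finite embedding problems having $|K|$ independent geometric solutions — is precisely Theorem~\ref{thm:equiv-def}, already in hand, and the extra hypothesis on $K$ is exactly the bridge that lets an \emph{abstract} split embedding problem for $\gal(K)$ be re-expressed as a \emph{geometric} one to which that theorem applies. The only points needing care are bookkeeping: the (routine) computation of $\rank\gal(K)$, and the verification that equivalence of embedding problems transports independent families of solutions, which holds because the equivalence induces the identity on $\gal(K)$ and therefore preserves kernels, solution fields, and the base field $L$.
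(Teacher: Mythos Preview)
Your proof is correct and follows essentially the same route as the paper: convert the given split embedding problem to a rational one via the hypothesis, then invoke (Galois) full Hilbertianity to obtain $|K|$ independent solutions. The paper's own proof is a two-line version of your second paragraph; your additional verification that $\rank\gal(K)=|K|$ is a point the paper's proof of this proposition omits and only establishes separately afterwards (via Proposition~\ref{prop:abelian-ker}), so your argument is actually more self-contained here.
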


\begin{proof}
Any split embedding problem over $K$ is equivalent to
a rational embedding problem. Therefore, by definition there exist
$|K|$ independent geometric solutions.
\end{proof}

Recall that a field $K$ is called \textbf{ample} (or \textbf{large}) if every curve defined over $K$
with a simple $K$-rational point has infinitely many $K$-rational points. In \cite{Pop1996}, Pop proves that the assumption of Proposition~\ref{prop:Cond-semi-free} holds for an ample field, see also \cite{HaranJarden1998}.
Therefore we get:

\begin{corollary}
\label{cor:f-Hil+ample=semi-free}
Let $K$ be a fully Hilbertian ample field. Then $\gal(K)$ is semi-free of rank $|K|$.
\end{corollary}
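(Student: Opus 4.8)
The plan is to deduce this immediately from Proposition~\ref{prop:Cond-semi-free} together with Pop's theorem on ample fields. The one hypothesis of Proposition~\ref{prop:Cond-semi-free} that is not part of the statement of the Corollary is that every finite split embedding problem for $K$ is equivalent to a rational embedding problem; so the only thing to check is that this holds for an ample field $K$.

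First I would invoke Pop's result \cite{Pop1996} (recalled in the remark following the definition of geometric/rational embedding problems): if $K$ is ample, then every finite split embedding problem over $K$ is \emph{regularly solvable}. Concretely, given a finite split embedding problem $(\nu\colon\gal(K)\to A,\ \alpha\colon B\to A)$ with $B$ finite and $\alpha$ split, there is a Galois extension $F/K(x)$ with $F\cap K_s = L$ (the fixed field of $\ker\nu$) and an isomorphism $\gal(F/K(x))\cong B$ identifying the restriction map $\gal(F/K(x))\to\gal(L/K)$ with $\alpha$. In our terminology this says precisely that the given embedding problem is equivalent to the rational embedding problem attached to $F/K(x)$. (One may also cite \cite{HaranJarden1998} for this; and note that by Lemma~\ref{lem:basic-rat-eps} passing between equivalent data does not affect rationality.)

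Having verified the hypothesis, I would then apply Proposition~\ref{prop:Cond-semi-free}: since $K$ is fully Hilbertian and every finite split embedding problem for $K$ is equivalent to a rational one, every such embedding problem has $|K|$ independent geometric solutions by the definition of a Galois fully Hilbertian field (which by Theorem~\ref{thm:equiv-def} is the same as fully Hilbertian). In particular it has $|K|$ independent solutions, so $\gal(K)$ is semi-free of rank $|K|$; that $\rank\gal(K)=|K|$ for an infinite field $K$ is standard (it follows, e.g., from the fact that a fully Hilbertian field is infinite and from counting finite separable extensions realized by Hilbert sets).

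There is essentially no obstacle here: the Corollary is a formal consequence of two already-available inputs, Pop's regular solvability of split embedding problems over ample fields and Proposition~\ref{prop:Cond-semi-free}. The only mild point worth stating explicitly in the write-up is the translation of "regularly solvable" into "equivalent to a rational embedding problem," which is exactly the content of the remark preceding Lemma~\ref{lem:basic-rat-eps}.
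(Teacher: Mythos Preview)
Your proposal is correct and follows essentially the same route as the paper: the corollary is stated immediately after the sentence ``In \cite{Pop1996}, Pop proves that the assumption of Proposition~\ref{prop:Cond-semi-free} holds for an ample field, see also \cite{HaranJarden1998},'' so the paper's proof is exactly the combination of Pop's theorem with Proposition~\ref{prop:Cond-semi-free} that you outline. Your extra remarks on Theorem~\ref{thm:equiv-def} and on $\rank\gal(K)=|K|$ are fine but redundant, since these are already absorbed into the statement and proof of Proposition~\ref{prop:Cond-semi-free}.
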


A stronger property than being ample is being PAC: A field $K$ is PAC if every curve defined over $K$ has infinitely many $K$-rational points.
The next result extends Roquette's Theorem \cite[Corollary~27.3.3]{FriedJarden2005}.

\begin{proposition}
Let $K$ be a PAC field. Then $K$ is fully Hilbertian if and only if
$\gal(K)$ is free of rank $|K|$.
\end{proposition}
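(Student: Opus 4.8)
The plan is to prove both directions of the equivalence, using Corollary~\ref{cor:f-Hil+ample=semi-free} and the characterization of free profinite groups via embedding problems.

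First suppose $\gal(K)$ is free of rank $|K|$. Since $K$ is PAC, $|K|$ is infinite (a finite field is not PAC), so $\gal(K)$ is free of infinite rank $m=|K|$. A free profinite group of infinite rank is semi-free of the same rank (indeed, a free group has $m$ independent solutions to \emph{every} finite embedding problem, in particular to every split one). To conclude that $K$ is fully Hilbertian I would invoke Theorem~\ref{thm:equiv-def} and verify the defining condition of Galois fully Hilbertian: every non-trivial rational finite embedding problem has $|K|$ independent geometric solutions. By Lemma~\ref{lem:splitep} it suffices to handle split ones. Since $K$ is PAC (hence ample), Pop's theorem gives that every finite split embedding problem over $K$ is equivalent to a rational embedding problem; and since $\gal(K)$ is free of rank $m$, such an embedding problem has $m$ independent solutions, which are automatically geometric by the equivalence with a rational embedding problem (cf.\ the argument of Proposition~\ref{prop:Cond-semi-free}). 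Hence $K$ is Galois fully Hilbertian, so fully Hilbertian.

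Conversely, suppose $K$ is PAC and fully Hilbertian. By Corollary~\ref{cor:f-Hil+ample=semi-free}, $\gal(K)$ is semi-free of rank $|K|$. It remains to upgrade ``semi-free'' to ``free,'' and here I would use the general principle mentioned in the introduction: a semi-free \emph{projective} profinite group is free. The absolute Galois group of a PAC field is projective (this is a theorem of Ax--Lubotzky--v.d.\ Dries, see \cite[Theorem~11.6.2]{FriedJarden2005}), so $\gal(K)$ is semi-free and projective, hence free, of rank $|K|$.

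The main obstacle is the forward direction: one must be careful that the $m$ independent solutions produced from freeness are \emph{geometric} solutions of the rational embedding problem, not merely abstract ones. This is exactly why PAC-ness (via Pop's theorem realizing split embedding problems rationally) enters, rather than Hilbertianity alone — it is the same subtlety flagged in Remark~\ref{rem:free-no-hil}. The converse direction is comparatively soft, resting on the projectivity of $\gal(K)$ for PAC $K$ together with the ``semi-free $+$ projective $\Rightarrow$ free'' fact from \cite{Bary-SorokerHaranHarbater}.
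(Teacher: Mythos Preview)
Your argument for the implication ``fully Hilbertian $\Rightarrow$ free of rank $|K|$'' is correct and matches the paper exactly: Corollary~\ref{cor:f-Hil+ample=semi-free} gives semi-free of rank $|K|$, projectivity of $\gal(K)$ for PAC $K$ comes from \cite[Theorem~11.6.2]{FriedJarden2005}, and semi-free $+$ projective $\Rightarrow$ free is \cite[Theorem~3.5]{Bary-SorokerHaranHarbater}.

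The other direction, however, has a genuine gap. You correctly identify the crux --- one must show that the $m$ independent solutions furnished by freeness are \emph{geometric} --- but your proposed fix does not work. Pop's theorem only says that an abstract split embedding problem is \emph{equivalent} to some rational one; it does not say that an abstract solution of a rational embedding problem is geometric. Your appeal to ``the argument of Proposition~\ref{prop:Cond-semi-free}'' goes in the wrong direction: there one starts from a fully Hilbertian field, uses the rationality to obtain geometric solutions, and then forgets geometricity to get abstract ones. You are trying to run this backwards, which is exactly what Remark~\ref{rem:free-no-hil} warns against (a field can have $\omega$-free absolute Galois group without being Hilbertian).

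What is actually needed --- and what the paper invokes --- is a PAC-specific fact going the other way: over a PAC field, \emph{every} weak solution of a geometric embedding problem is already geometric (\cite[Corollary~3.4]{Bary-SorokerPACext}). This uses the PAC property to produce a rational point realizing a prescribed decomposition group, and it is strictly stronger than ampleness/Pop. With this in hand the argument is immediate: freeness of rank $|K|$ gives $|K|$ independent abstract solutions of any rational finite embedding problem, and the cited result upgrades each of them to a geometric solution.
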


\begin{proof}
Assume $K$ is fully Hilbertian. Then Corollary~\ref{cor:f-Hil+ample=semi-free} implies that $\gal(K)$ is semi-free of rank $|K|$. But \cite[Theorem~11.6.2]{FriedJarden2005} implies that $\gal(K)$ is projective, and
thus $\gal(K)$ is free of rank $|K|$
\cite[Theorem~3.5]{Bary-SorokerHaranHarbater}.

Assume that $\gal(K)$ is free of rank $|K|$. Then any finite
embedding problem has $m$ independent solutions \cite[Propsition~25.1.6]{FriedJarden2005}. Since $K$
is PAC, any solution of a geometric embedding problem is geometric
\cite[Corollary~3.4]{Bary-SorokerPACext}. Therefore any rational finite embedding problem
for $K$ has $m$ independent geometric solutions, i.e., $K$ is fully
Hilbertian.
\end{proof}

We now consider arbitrary fields. We give a result about embedding problems with abelian kernel.

\begin{proposition}
\label{prop:abelian-ker}
Let $K$ be a fully Hilbertian field of cardinality $m$, $A$ an abelian group,  $L/K$ a finite Galois extension with Galois group $G=\gal(L/K)$. Assume that $G$ acts on $A$. Then the infinite group $A^m \rtimes G$ occurs as a Galois group over $K$. In particular $A^m$ occurs as a Galois group over $K$.
\end{proposition}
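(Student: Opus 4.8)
The plan is to realize $A^m \rtimes G$ as a Galois group by constructing a rational embedding problem whose geometric solutions give the desired extension. First I would set up a split embedding problem with kernel $A$: take $L/K$ as given with $G = \gal(L/K)$ acting on $A$, and form the group $A \rtimes G$. By \cite[Lemma~11.6.1]{FriedJarden2005} (or directly, using the theory of rational embedding problems recalled in the excerpt), this finite split embedding problem $(\nu\colon \gal(K)\to G, \alpha\colon A\rtimes G \to G)$ is equivalent to a rational embedding problem $(\nu, \alpha'\colon \gal(F/K(x))\to \gal(L/K))$; note that since $A$ is nontrivial this embedding problem is non-trivial, and it is split.

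Since $K$ is fully Hilbertian, it is Galois fully Hilbertian by Theorem~\ref{thm:equiv-def}, and hence by Lemma~\ref{lem:splitep} there exist $m = |K|$ pairwise-independent geometric solutions $\{\theta_i \mid i\in I\}$; by the definition of Galois fully Hilbertian (or by applying \cite[Proposition~3.6]{Bary-SorokerHaranHarbater} as in the proof of Lemma~\ref{lem:splitep}) we may take them to be genuinely independent, i.e., the solution fields $N_i$ (fixed fields of $\ker\theta_i$, each satisfying $\gal(N_i/K)\cong A\rtimes G$ and $N_i\cap K_s \supseteq$... more precisely $N_i \supseteq L$ with $\gal(N_i/L)\cong A$) are linearly disjoint over $L$. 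Now I would compute the Galois group of the compositum $N = \prod_{i\in I} N_i$. Independence of the solutions means, via \cite[Lemma~2.5]{Bary-SorokerHaranHarbater}, that $\gal(N/K)$ is the fiber product $\prod_G (A\rtimes G)$ of $|I| = m$ copies of $A\rtimes G$ over $G$. This fiber product is canonically isomorphic to $A^m \rtimes G$: an element is a tuple $((a_i,g))_{i\in I}$ with a common $g\in G$-component, which we identify with $((a_i)_{i\in I}, g) \in A^m \rtimes G$, and one checks the multiplication matches because $G$ acts diagonally on $A^m$. Hence $A^m \rtimes G$ occurs as $\gal(N/K)$. Since $A^m$ is a normal subgroup, $A^m = \gal(N/L)$ also occurs as a Galois group over $L$... but the statement only claims it occurs over $K$, which follows because $A^m \rtimes G \twoheadrightarrow$ does not immediately give $A^m$ over $K$; rather, $A^m \cong \gal(N/L)$ occurs over $L$. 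Re-reading the statement: "In particular $A^m$ occurs as a Galois group over $K$" — this holds by taking $G$ trivial in the construction, i.e., apply the main assertion with $L = K$ and $G = 1$, giving $A^m = A^m\rtimes 1$ over $K$ directly.

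The main obstacle I expect is verifying carefully that the fiber product $\prod_G (A\rtimes G)$ is isomorphic to $A^m \rtimes G$ rather than some other extension of $G$ by $A^m$; this requires keeping track of the $G$-action and checking it is the diagonal one, which is exactly what the common $G$-component in the fiber product enforces. A secondary point is ensuring the embedding problem is genuinely non-trivial so that Lemma~\ref{lem:splitep} applies — this is automatic as long as $A \neq 1$, and if $A = 1$ the statement is vacuous. One should also make sure the $m$ solutions can be chosen independent (not merely pairwise independent); this is handled exactly as in the proof of Lemma~\ref{lem:splitep} via \cite[Proposition~3.6]{Bary-SorokerHaranHarbater}, passing to a subset of the same cardinality $m$.
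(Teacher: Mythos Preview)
Your overall strategy matches the paper's proof: set up the split embedding problem $(\nu,\alpha)$ with $\alpha\colon A\rtimes G\to G$, obtain $m$ independent solutions from full Hilbertianity, and identify the fiber product $\prod_G(A\rtimes G)$ with $A^m\rtimes G$ (the paper cites \cite[Lemma~4.3]{Bary-SorokerHaranHarbater} for this last isomorphism, and \cite[Lemma~2.5]{Bary-SorokerHaranHarbater} for surjectivity of the product map, exactly as you outline).

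However, there is a genuine gap in your first step. You invoke \cite[Lemma~11.6.1]{FriedJarden2005} to assert that the split embedding problem is equivalent to a \emph{rational} embedding problem. But that lemma only produces a \emph{geometric} embedding problem (with $E/K$ regular, not necessarily purely transcendental); the paper's remark following the definition of geometric embedding problems says exactly this. Upgrading a split embedding problem to a rational one is nontrivial in general --- Pop's theorem does it over ample fields, but $K$ here is only assumed fully Hilbertian. The paper closes this gap by invoking Ikeda's theorem \cite[Proposition~16.4.4]{FriedJarden2005}, which gives regular solvability specifically for embedding problems with \emph{abelian} kernel, over any field. This is where the hypothesis that $A$ is abelian actually enters the argument; in your write-up that hypothesis is never used. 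Once you replace the citation of Lemma~11.6.1 by Ikeda's theorem, your proof coincides with the paper's.
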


\begin{proof}
Let $\mu\colon \gal(K) \to G$ be the restriction map, and $\alpha\colon A\rtimes G \to G$ the quotient map.
By Ikeda's theorem \cite[Proposition 16.4.4]{FriedJarden2005}, the embedding problem $(\mu,\alpha)$ is equivalent to a rational embedding problem. Since $K$ is fully Hilbertian, there exists a set $S$ of $m$ independent solutions.

The fiber product of $m$ copies of $A\rtimes G$ over $G$ is isomorphic to $A^m\rtimes G$ \cite[Lemma~4.3]{Bary-SorokerHaranHarbater} and the corresponding solution
\[
\prod_{\theta\in S} \theta \colon \gal(K) \to A^m\rtimes G
\]
is surjective \cite[Lemma~2.5]{Bary-SorokerHaranHarbater}.
\end{proof}

For any field $K$, the rank of $\gal(K)$ is bounded by $|K|$.
Taking $A=\bbZ/2\bbZ$ in Proposition~\ref{prop:abelian-ker}, we
get the following result:

\begin{proposition}
If $K$ is fully Hilbertian, then the rank of $\gal(K)$ equals $|K|$.
\end{proposition}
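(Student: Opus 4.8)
The plan is to prove the two inequalities $\rank \gal(K) \leq |K|$ and $\rank \gal(K) \geq |K|$ separately. The first is the general fact recorded above, valid for an arbitrary field, so it requires nothing new; all the content lies in the lower bound, and this is where Proposition~\ref{prop:abelian-ker} is used.

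For the lower bound, first apply Proposition~\ref{prop:abelian-ker} with $A = \bbZ/2\bbZ$, with $L = K$ (so that $G = \gal(L/K)$ is trivial and acts trivially on $A$), and with $m = |K|$; note that $m$ is infinite because a fully Hilbertian field is Hilbertian, hence infinite. The conclusion of the proposition is that $A^m \rtimes G = (\bbZ/2\bbZ)^m$ occurs as a Galois group over $K$, i.e.\ there is a continuous epimorphism $\pi\colon \gal(K) \twoheadrightarrow (\bbZ/2\bbZ)^m$. Since $(\bbZ/2\bbZ)^m$ is not topologically finitely generated, every system of topological generators of $\gal(K)$ is infinite.

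I would then show directly that $\rank \gal(K) \geq m$. Let $S \subseteq \gal(K)$ be a system of generators converging to $1$, so $\overline{\langle S \rangle} = \gal(K)$; the goal is $|S| \geq m$. For each $i < m$ let $\chi_i\colon \gal(K) \to \bbZ/2\bbZ$ be $\pi$ composed with the $i$-th coordinate projection; since $\pi$ is surjective and the projections are pairwise distinct, the $\chi_i$ are pairwise distinct continuous homomorphisms. As $\ker \chi_i$ is open and $S$ converges to $1$, the set $T_i = \{\, s \in S : \chi_i(s) = 1 \,\}$ is finite. If $|S| < m$, then, since $S$ is infinite, the collection of finite subsets of $S$ has cardinality $|S| < m$, so by the pigeonhole principle there are $i \neq j$ with $T_i = T_j$. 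Then $\chi_i$ and $\chi_j$ agree on $S$, hence the continuous homomorphism $g \mapsto \chi_i(g) + \chi_j(g)$ vanishes on $S$, hence on $\overline{\langle S \rangle} = \gal(K)$; thus $\chi_i = \chi_j$, a contradiction. Therefore $|S| \geq m$, so $\rank \gal(K) \geq m = |K|$, and combined with the upper bound this gives equality.

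The argument is essentially routine; the one point needing care is the use of the correct notion of rank for an infinitely generated profinite group — the least cardinality of a system of generators \emph{converging to $1$} — since it is precisely this convergence condition that drives the pigeonhole step (without it, a set of size roughly $\log_2 m$ could already generate $(\bbZ/2\bbZ)^m$ densely). Equivalently, the last two paragraphs may be packaged as the single statement $\rank\bigl((\bbZ/2\bbZ)^m\bigr) = m$, using that the rank of a profinite group is at least the rank of any of its continuous quotients.
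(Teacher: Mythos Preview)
Your proof is correct and follows the same approach as the paper: both invoke Proposition~\ref{prop:abelian-ker} with $A=\bbZ/2\bbZ$ to realize $(\bbZ/2\bbZ)^{|K|}$ over $K$, combined with the general upper bound $\rank\gal(K)\leq |K|$. The paper leaves the implication ``$(\bbZ/2\bbZ)^m$ is a quotient $\Rightarrow$ $\rank\gal(K)\geq m$'' implicit, whereas you spell it out via the pigeonhole argument on generators converging to~$1$; this is a welcome level of detail, not a deviation in strategy.
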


\section{Rational Function Fields}
In this section we show that the basic families of Hilbertian
fields --- number fields and function fields --- are fully
Hilbertian. Since for a countable field full Hilbertianity is
equivalent to Hilbertianity, we are left with function
fields over an infinite field. Our proof is based on an explicit
description of elements in Hilbert sets and on the fact that full
Hilbertianity is preserved by finite extensions. The latter assertion  will be
proved below, see Corollary~\ref{cor:finite-ext}.

Let $K_0$ be an infinite field and $K=K_0(t)$ a rational function field over
$K_0$.
Then each Hilbert set contains a set
\[
\{ a + bt  \mid g(a,b)\neq 0\},
\]
for some nonzero polynomial $g(A,B)\in K_0[A,B]$ \cite[Proposition~13.2.1]{FriedJarden2005}. In this section
we use this description in order to show that $K$ is fully Hilbertian.

\begin{theorem}\label{thm:ratff}
Let $K_0$ be a field and let $K=K_0(t)$ be a rational function field.
Then $K$ is fully Hilbertian.
\end{theorem}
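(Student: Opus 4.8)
The plan is to reduce to the case of an infinite base field and then exploit the explicit description of Hilbert sets over $K = K_0(t)$. First I would dispose of the countable case: if $K_0$ is finite or countable then $K$ is countable, and since $K$ is Hilbertian (a rational function field over any field is Hilbertian, \cite[Chapter~13]{FriedJarden2005}), Corollary~\ref{cor:countableHilbertian} gives that $K$ is fully Hilbertian. So assume $K_0$ is infinite, hence $|K| = |K_0| =: m$.

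Now I would use the characterization from Theorem~\ref{thm:equiv-def}: it suffices to show $K$ is \emph{regular} fully Hilbertian, i.e. to handle absolutely irreducible $f(X,Y) \in K[X,Y]$ separable in $Y$. By \cite[Proposition~13.2.1]{FriedJarden2005} the Hilbert set $H(f)$ contains a set of the form $\{a + bt \mid g(a,b) \neq 0\}$ for some nonzero $g(A,B) \in K_0[A,B]$, and moreover (clearing denominators and shrinking $g$) one can arrange that for such specializations $f(a+bt, Y)$ stays irreducible with a root $b_{a,b}$ generating a field $K(b_{a,b})$ of the full degree $\deg_Y f$. The key point is that substituting $X \mapsto a + bt$ with $a,b \in K_0$ is a \emph{$K_0$-rational} change of the variable $t$, so the residue field $K(b_{a,b})$ is, up to $K_0$-isomorphism, built from the ``generic'' extension $E = K(x)[Y]/(f(x,Y))$ by specialization; one then has to prove that among the $m$-many admissible pairs $(a,b)$ one can select a subset $S$ of cardinality $m$ for which the fields $K(b_{a,b})$, $(a,b) \in S$, are linearly disjoint over $K$.

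To produce this large linearly disjoint family I would argue inductively (transfinitely) along the lines already used in the proof of Lemma~\ref{lem:<m_ep}: having chosen fewer than $m$ specializations, their compositum $N$ over $K$ has rank $< m$, hence only $< m$ finite subextensions; since the number of available specialization parameters is $m$, and distinct ``good'' specializations give fields that are pairwise ``as disjoint as a generic fibre'', one can find a new parameter whose specialized field meets $N$ trivially over $K$ (or at least over the relevant constant field), giving one more independent member. The bookkeeping is cleanest if one passes, via Theorem~\ref{thm:equiv-def} again, to a Galois rational embedding problem $(\nu\colon \gal(K)\to\gal(L/K),\ \alpha\colon\gal(F/K(x))\to\gal(L/K))$ with $F$ the Galois closure, and shows that the place $x \mapsto a + bt$ (with $(a,b)$ avoiding the discriminant locus and the finitely many bad values coming from $N$) extends to an unramified $L$-place $\Phi$ of $F$ with $\Phi^*$ surjective and with residue field linearly disjoint from $N$ over $L$; this is exactly a geometric independent solution, and one uses that $K$ is Hilbertian (to get the surjectivity/irreducibility at a Zariski-dense set of parameters) together with the counting argument to fit in $m$ of them.

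The main obstacle I anticipate is not the construction of a single irreducible specialization — that is just Hilbertianity of $K_0(t)$ — but rather the ``independence at scale $m$'': ensuring that the specialized fields can be chosen pairwise (indeed $n$-wise for every finite $n$) linearly disjoint over the constant field, uniformly over a set of parameters of full cardinality $m$. Concretely the delicate step is showing that for all but $< m$ choices of $(a,b)$ the specialized residue field $K(b_{a,b})$ avoids a given ``small'' subextension $N$ of the algebraic closure of $K$ in the already-constructed compositum; this requires knowing that distinct good specializations cannot all collapse into the $< m$ available finite subextensions of $N$, which is precisely the cardinality/counting input (and is the place where $m = |K_0| > \aleph_0$ is genuinely used, the countable case having been handled separately).
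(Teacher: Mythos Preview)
Your proposal is essentially on the right track, but it is more roundabout than the paper's argument and has one imprecision worth flagging.

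The paper does not redo the transfinite induction by hand; it simply \emph{applies} Lemma~\ref{lem:<m_ep}. That lemma already packages the entire inductive bookkeeping: it reduces full Hilbertianity to producing a \emph{single} geometric solution of any rational embedding problem $(\nu,\alpha\colon\gal(F/K(x))\to\gal(L/K))$ with $\rank\gal(F/K(x))=n<m$ and finite kernel. Here $F/K(x)$ is allowed to be infinite. The point is then purely combinatorial: such an $F$ has at most $n<m$ finite proper subextensions $E_i/K(x)$, each contributing one nonzero polynomial $g_i\in K_0[A,B]$ via the explicit Hilbert set description; since $n<m=|K_0|$, a direct counting argument gives $(a,b)\in K_0^2\smallsetminus\bigcup_i C_i(K_0)$, and the resulting place $x\mapsto a+bt$ is unramified and has trivial decomposition field, hence yields a surjective $\phi^*$. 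No compositum $N$ of previously constructed solution fields ever appears --- that was already absorbed into $F$ inside the proof of Lemma~\ref{lem:<m_ep}.

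By contrast, you keep $F$ finite and carry the growing compositum $N$ along by hand. This forces you, at each step, to explain why the new residue field $\bar F$ is linearly disjoint from $N$ over $L$, and you do not say how: the phrase ``the finitely many bad values coming from $N$'' is not right (after $\aleph_0$ steps there are $<m$, not finitely many, finite subextensions of $N$), and with $F$ fixed there is no obvious mechanism to force $\bar F\cap N=L$. The fix is exactly what Lemma~\ref{lem:<m_ep} does internally: replace $F$ by $FN$ and solve the enlarged embedding problem. Once you see this, you may as well invoke the lemma directly. Your initial detour through \emph{regular} full Hilbertianity is also less convenient, since the specialized fields $K(b_{a,b})$ are not Galois over $K$ and the intersection argument you sketch (``pairwise as disjoint as a generic fibre'') is not justified; the Galois formulation avoids this entirely.
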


\begin{proof}
If $K_0$ is countable, then Hilbertianity is the same as fully
Hilbertianity. Hence $K$ is fully Hilbertian.

Assume that $m = |K_0|$ is infinite. By Lemma~\ref{lem:<m_ep} it
suffices to geometrically solve a rational embedding problem
\[
(\nu\colon \gal(K) \to \gal(L/K), \alpha\colon \gal(F/K(x))
\to\gal(L/K))
\]
under the assumption that $\rank \gal(F/K(x)) = n < m$.

Since the number of finite subextensions of $F/K(x)$ is bounded by
$n$ (and actually equals $n$ if $n$ is infinite), we can order
the finite subextensions $E\neq K(x)$ of $F/K(x)$, say $\{E_i \mid
i\leq n\}$. For each $E_i$ there exists a Hilbert set $H_i$ such that
for each $u\in H_i$ and for each $K$-place $\phi$ of $E_i$ that lies over $(x-u)$ the
degree of the residue field extension equals to $[E_i:K(x)]$. In
particular $\phi$ is unramified in $E_i$.

Let $g_i(A,B)\in K_0[A,B]$ be the nonzero polynomial such that $\{a +bt\mid a,b\in K_0\mbox{ and }g(a,b)\neq 0\} \subseteq
H_i$. Write $C_i(K_0) = \{ (a,b)\in K_0^2 \mid g(a,b) = 0\}$ for the
corresponding curve. Now since $n<m$
\[
K_0^2 \neq  \bigcup_{i\leq n} C_i(K_0).
\]
Let $(a,b)\in K_0^2 \smallsetminus \bigcup_{i\leq n} C_i(K_0)$.
Extend the specialization $x\mapsto a + b t\in K$ to a $K$-place $\phi$ of
$F$.

For each $i\leq n$, $g_i(a,b)\neq 0$, hence $\phi|_{E_i}$ is unramified
over $K(x)$. Therefore $\phi$ is unramified over $K$, and thus the geometric weak
solution $\phi^*\colon \gal(K) \to \gal(F/K(x))$ of $(\nu,\alpha)$
is well defined.

We claim that $\phi^*\colon \gal(K) \to \gal(F/K(x))$ is a solution,
i.e., surjective. To show that it suffices to show that
$D_\phi=\gal(F/K(x))$, where $D_\phi$ is the corresponding decomposition group.
Let $\Delta$ be the decomposition field (i.e.\ the fixed field of
$D_\phi$ in $F$). We need to show that $\Delta = K(x)$.

If $\Delta \neq K(x)$, then there exists a finite extension $E_i$ of $K(x)$
such that $E_i \subseteq \Delta$. Hence the residue field $\bar E_i$
of $E_i$ is $K$. But since $g_i(a,b) \neq 0$ we have $[\bar E_i : K]
= [E_i:K(x)] > 1$.

This contradiction implies that $\Delta = K(x)$, and hence $\phi^*$ is an epimorphism, which concludes the proof.
\end{proof}

\begin{corollary}[Theorem~\ref{IT:function_field}]
Let $F$ be a finitely generated transcendental extension of an arbitrary field $K$. Then $F$ is fully Hilbertian.
\end{corollary}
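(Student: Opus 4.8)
The plan is to deduce the statement from Theorem~\ref{thm:ratff} together with the permanence of full Hilbertianity under finite extensions (Corollary~\ref{cor:finite-ext}), the point being that every finitely generated transcendental extension is a finite extension of a rational function field in one variable over a suitable ground field.

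First I would fix a transcendence basis $t_1,\dots,t_d$ of $F/K$; since $F/K$ is transcendental we have $d\geq 1$, and since $F/K$ is finitely generated, $F$ is a \emph{finite} extension of $K(t_1,\dots,t_d)$. Next, set $K_0 = K(t_1,\dots,t_{d-1})$ (so $K_0 = K$ if $d=1$). Then $K(t_1,\dots,t_d) = K_0(t_d)$ is a rational function field over the field $K_0$, hence fully Hilbertian by Theorem~\ref{thm:ratff}. It therefore remains to pass from the fully Hilbertian field $K(t_1,\dots,t_d)$ to its finite extension $F$, which is exactly Corollary~\ref{cor:finite-ext}.

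If one prefers to unwind that last step rather than cite it as a black box, I would factor $F/K(t_1,\dots,t_d)$ through the separable closure $E$ of $K(t_1,\dots,t_d)$ in $F$: then $E/K(t_1,\dots,t_d)$ is finite separable, so $E$ is fully Hilbertian by Theorem~\ref{IT:separable}(a), and $F/E$ is purely inseparable, so $F$ is fully Hilbertian by Theorem~\ref{IT:purely_iseparable}. The only genuine subtlety — and the reason this factorization is needed at all — is that in positive characteristic $F/K(t_1,\dots,t_d)$ need not be separable (one cannot always choose a separating transcendence basis when $K$ is imperfect); in characteristic $0$ the finite-separable case applies directly. Beyond that the argument is purely a matter of assembly, with all the real content residing in Theorem~\ref{thm:ratff} and the permanence results.
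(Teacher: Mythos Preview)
Your proof is correct and follows essentially the same route as the paper: choose a transcendence basis, apply Theorem~\ref{thm:ratff} to $K(t_1,\dots,t_{d-1})(t_d)$, and then invoke Corollary~\ref{cor:finite-ext} for the remaining finite extension. The only cosmetic difference is that in your optional unwinding of Corollary~\ref{cor:finite-ext} you factor the finite step as separable-then-purely-inseparable, whereas the paper does purely-inseparable-then-separable; both are valid.
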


\begin{proof}
Let $(t_1,\ldots, t_r)$ be a transcendence basis of $F/K$.
Then $F$ is a finite extension of $K_0(t_1, \ldots,
t_r)$. By Theorem~\ref{thm:ratff} the field $K(t_1,\ldots, t_r) =
K(t_1,\ldots,t_{r-1})(t_r)$ is fully Hilbertian. By Corollary~\ref{cor:finite-ext} below,
$F/K(t_1,\ldots, t_r)$ is fully
Hilbertian.
\end{proof}

\section{The twinning principle}
\label{sec:twinning} We mentioned in the introduction the
\emph{twinning principle} formulated by Lubotzky-Jarden. This
principle suggests there should be a connection between results
about free subgroups of a free profinite group and results about
Hilbertian separable extensions of a Hilbertian field. We also
remarked in the introduction that the \emph{Haran-Shapiro
induction} gives a strong evidence to this principle, since it
proves the diamond theorem in both settings, and the proofs are
analogous.

In fact results about Hilbertian fields transfer to results about free profinite groups of countable rank, using Roquette's theorem, see \cite[Weak twinning principle]{JardenLubotzky1992}.

In this section we further study the twinning principle.
We first claim that the connection should be between semi-free profinite groups of rank $m$ and fully Hilbertian fields of cardinality $m$.
\begin{enumerate}
\item The connection is evident from the characterization of fully Hilbertian fields 
as Galois fully Hilbertian fields (Theorem~\ref{thm:equiv-def}).  A semi-free group $\Gamma$ of rank $m$ is a profinite group for which every non-trivial finite split embedding problem has $m$ independent solutions, while a Galois fully Hilbertian field  of cardinality $|K|=m$ is a field for which every non-trivial \textit{rational} embedding problem has $m$ independent \textit{geometric} solutions.
\item The absolute Galois group of a fully Hilbertian field is not projective in general,
and free groups are projective. In the meanwhile semi-free groups are, in a sense, free group without projectivity
(\cite[Theorem~3.5]{Bary-SorokerHaranHarbater}).
\item A fully Hilbertian ample field has a semi-free group rank $|K|$ (Corollary~\ref{cor:f-Hil+ample=semi-free}), and conditionally this is always the case (Proposition~\ref{prop:Cond-semi-free}).
\end{enumerate}
Maybe the most convincing argument for this connection is the following
\begin{enumerate}\addtocounter{enumi}{3}
\item
\label{case:twinning-carry-over}
The proofs about sufficient conditions for a subgroup of a semi-free group to be semi-free carry over to the field theoretic setting.
\end{enumerate}

\begin{remark}
If one wants to consider the more classical setting of Hilbertian fields, then our method gives the connection with the property that each finite split embedding problem has a solution. One can even consider only semi-free groups of countable rank.
\end{remark}

Let us go into more details. First note that there is a certain trade-off here -- in the group
theoretic setting we need to solve all finite split
embedding problems, while in field theory only rational
embedding problems come into the picture. On the other hand, in
the field theoretic setting we need the solution to have an extra property, namely only geometric solutions are considered.

The way we transfer group theoretic proofs into field theory is via the Haran-Shapiro induction. This method uses  fiber products and twisted wreath products in order to induce solutions of embedding problems from the group to its subgroup. So what we need to do is to prove, or essentially to observe, that all constructions of the Haran-Shapiro induction are field theoretic constructions. That is to say, (1) if one starts from a rational embedding problem, then the constructed embedding problems are also  rational (2) the solutions induced by the constructions preserve the property of being geometric.

As an immediate consequence of this, all the permanence properties of semi-free groups carry over to fully Hilbertian fields. In particular we get all cases of Theorem~\ref{IT:separable}.

\subsection{Fiber products and twisted wreath products}
As indicated before in results about semi-free subgroups of
semi-free groups there are two group theoretic constructions
involved, fiber products and twisted wreath products. Let us start
with the more elementary one, fiber products.

Write $\Gamma = \gal(K)$ and consider an embedding problem $(\mu \colon \Gamma \to G, \alpha\colon H\to G)$ for $K$. Let $\Delta$ be a normal subgroup of $\Gamma$ contained in $\ker \mu$. Then $\mu$ factors as $\mu=\mugag \muhat$, where $\muhat\colon \Gamma \to \Ghat:=\Gamma/\Delta$ is the quotient map and $\mugag\colon  \Ghat \to G$ is canonically defined. This data defines a corresponding \textbf{fiber product embedding problem}:
\[
\xymatrix{
    &\Gamma\ar[d]_{\muhat}\ar@/^/[dd]^{\mu}\\
H\times_{G} \Ghat \ar[r]^{\alphahat}\ar[d]^{\beta}
    &\Ghat\ar[d]_{\mugag}\\
H\ar[r]^{\alpha}
    &G.
}
\]
Here $H\times_{G} \Ghat = \{ (h,\ghat)\in H\times \Ghat \mid \alpha(h) = \mugag(\ghat)\}$ and $\alphahat, \beta$ the canonical projections.

A (weak) solution $\thetahat\colon \Gamma\to H\times_G \Ghat$ of the fiber product embedding problem induces the (weak) solution $\theta=\beta\thetahat$ of $(\mu,\alpha)$.

The following result asserts that this construction preserves field theoretic aspects.
\begin{lemma}
\label{lem:fiber_split}
In the above notation, if $(\mu,\alpha)$ is geometric, then so is $(\muhat,\alphahat)$. Moreover, if $\thetahat$ is geometric, so does $\theta$.

More precisely, assume $H = \gal(F/K(x))$, $G=\gal(L/K)$ for $L=F\cap K_S$, and $\alpha,\mu$ are the restriction maps. Let $N$ be the fixed field of $\Delta$. Then $\Ghat = \gal(N/K)$, $H\times_{G} \Ghat = \gal(FN/K(x))$, and $\alphahat,\beta$ are the corresponding restriction maps. Moreover, if $\thetahat = \hat \Pfrak^*$ for some unramified prime extension  $\hat \Pfrak/(x-a)$ of $FN/K(x)$, then $\theta = (\hat\Pfrak\cap F)^*$.
\end{lemma}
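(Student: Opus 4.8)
The plan is to unwind the definitions and check that the fiber product embedding problem, which was defined purely group-theoretically, is realized geometrically by the field $FN$, with the projections being restriction maps; this is essentially bookkeeping once one knows $FN$ is Galois over $K(x)$ with the expected Galois group, which is exactly Lemma~\ref{lem:basic-rat-eps}\eqref{case:fib-pro-ep-is-rat}. So first I would set $N$ to be the fixed field of $\Delta$ inside $K_s$; since $\Delta \subseteq \ker\mu = \gal(L)$ and $\Delta$ is normal in $\Gamma = \gal(K)$, the field $N$ is Galois over $K$ and contains $L$, and $\Ghat = \Gamma/\Delta = \gal(N/K)$, with $\muhat$ the restriction map and $\mugag$ the restriction $\gal(N/K)\to\gal(L/K)$. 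This is the identification that makes the abstract fiber product square coincide with a concrete one. Then $F$ is regular over $L$ (as $(\mu,\alpha)$ is a geometric embedding problem), hence $F$ and $N$ are linearly disjoint over $L$, so $F$ and $N(x)$ are linearly disjoint over $L(x)$, and the standard identification gives $\gal(FN/K(x)) \cong \gal(F/K(x))\times_{\gal(L/K)}\gal(N/K) = H\times_G\Ghat$ via $\sigma\mapsto(\sigma|_F,\sigma|_N)$, under which $\alphahat$ (projection to $\Ghat$) becomes restriction to $N$ and $\beta$ (projection to $H$) becomes restriction to $F$; moreover $FN\cap K_s = N$ since $F\cap K_s = L\subseteq N$, so $(\muhat,\alphahat)$ is genuinely a geometric (indeed rational, if $H = \gal(F/K(x))$ with $K(x)$ rational) embedding problem. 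That disposes of the first assertion.

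For the second assertion, suppose $\thetahat = \hat\Pfrak^*$ is a geometric weak solution of $(\muhat,\alphahat)$, coming from an unramified prime $\hat\Pfrak$ of $FN/K(x)$ over $(x-a)$ with residue field $\overline{FN} = K$ (condition~\eqref{def:geo-sol-a}) — equivalently, from the corresponding $N$-place $\hat\Phi$ of $FN$ lying over the $K$-place $x\mapsto a$ with $\hat\Phi/\phi$ unramified. Put $\Phi = \hat\Phi|_F$ and let $\phi = \hat\Phi|_{K(x)}$ be the underlying $K$-place. Then $\phi$ is the place $x\mapsto a$, the residue field of $F$ under $\Phi$ is contained in that of $FN$, hence equals $K$ (so condition~\eqref{def:geo-sol-a} holds for $\Phi/\phi$), and $\Phi/\phi$ is unramified because $\hat\Phi/\phi$ is and ramification indices are multiplicative along the tower $K(x)\subseteq F\subseteq FN$ of valuations. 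Thus $\Phi^*\colon\gal(K)\to\gal(F/K(x))$ is a well-defined geometric weak solution of $(\mu,\alpha)$. The remaining point is the compatibility $\theta = \beta\thetahat = \Phi^*$, i.e.\ $(\hat\Pfrak\cap F)^* = \hat\Pfrak^*$ followed by restriction to $F$: this is immediate because the correspondence $\Phi\mapsto\Phi^*$ respects restriction maps — the decomposition group $D_{\Phi/\phi}$ is the image of $D_{\hat\Phi/\phi}$ under restriction $\gal(FN/K(x))\to\gal(F/K(x))$, and under the identifications above that restriction is exactly $\beta$. Hence $\theta$ is geometric, realized by the place $\hat\Pfrak\cap F = \Phi$.

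I don't expect a genuine obstacle here — the content is entirely the two identifications $N = (K_s)^\Delta$ with $\gal(N/K) = \Gamma/\Delta$, and $\gal(FN/K(x)) = H\times_G\Ghat$ from Lemma~\ref{lem:basic-rat-eps}. The one place that needs a moment's care is verifying that the decomposition group and the unramifiedness descend correctly through the tower $K(x)\subseteq F\subseteq FN$ of function fields and their extensions of places, and that $\overline{F} = K$ follows from $\overline{FN} = K$; but both are standard facts about valuation rings (multiplicativity of ramification indices, and a subfield of a trivial-residue-extension field has trivial residue extension), and can be cited from \cite[\S6.1]{FriedJarden2005}. The only thing to state explicitly is that we use regularity of $F/L$ to get linear disjointness of $F$ and $N$ over $L$; this is given because $(\mu,\alpha)$ is a geometric embedding problem by hypothesis.
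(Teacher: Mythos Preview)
Your argument is correct in substance and is in fact just an unpacking of Lemma~\ref{lem:basic-rat-eps}\eqref{case:fib-pro-ep-is-rat} and \eqref{case:fib-pro-geo-sol}, which already contain exactly the two identifications you need. The paper itself does not give an argument here but defers to \cite[Lemma~2.6]{Bary-SorokerPACext}; your proof is the self-contained version, and indeed one could simply cite Lemma~\ref{lem:basic-rat-eps} directly.

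One small correction: you misread condition~\eqref{def:geo-sol-a} of Definition~\ref{def:geometricsolution}. That condition requires the residue field of the \emph{base} $E=K(x)$ under $\phi$ to be $K$, not the residue field of $F$ or $FN$. In particular, the residue field of $FN$ under $\hat\Phi$ is certainly not $K$ (it contains $N$, since $\hat\Phi$ is an $N$-place), so your sentence ``the residue field of $F$ under $\Phi$ is contained in that of $FN$, hence equals $K$'' is false as stated. Fortunately this does not matter: condition~\eqref{def:geo-sol-a} is automatic for both embedding problems, since both have base $K(x)$ and the place lies over $(x-a)$ with $a\in K$. What you do need to note is that $\Phi=\hat\Phi|_F$ is an $L$-place (because $\hat\Phi$ is an $N$-place and $L\subseteq N$), which is the hypothesis in Definition~\ref{def:geometricsolution}; you have this implicitly but it is worth stating. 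With that adjustment your proof goes through cleanly.
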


\begin{proof}
See \cite[Lemma~2.6]{Bary-SorokerPACext}.
\end{proof}

Now we move to the more sophisticated construction of twisted wreath products.
For the reader's convenience we give a short survey on the Haran-Shapiro induction, starting from the definitions.  See \cite{Bary-SorokerHaranHarbater} for full details and proofs.

\begin{definition}
Let $A$, $G_0 \leq G$ be finite groups. Assume that $G_0$ acts on
$A$ (from the right). Let
\[
\Ind_{G_0}^G(A) = \{ f\colon G\to A\mid f(\sigma\rho) =
f(\sigma)^\rho,\ \forall \sigma\in G, \rho\in G_0\} \cong
A^{(G:G_0)}.
\]
Then $G$ acts on $\Ind_{G_0}^G(A)$ by $f^\sigma(\tau) = f(\sigma
\tau)$. We define the \textbf{twisted wreath product} to be the
semidirect product
\[
A\wr_{G_0} G = \Ind_{G_0}^G(A) \rtimes G,
\]
i.e., an element in $A\wr_{G_0} G$ can be written uniquely as
$f\sigma$, where $f\in \Ind_{G_0}^G(A)$ and $\sigma\in G$. The
multiplication is then given by $(f\sigma) (g\tau) = f
g^{\sigma^{-1}} \sigma\tau$. The twisted wreath product is equipped
with the quotient map $\alpha\colon A\wr_{G_0} G\to G$ defined by
$\alpha(f\sigma) = \sigma$.

The map $\pi \colon \Ind_{G_0}^G(A) \rtimes G_0  \to A\rtimes G_0$
defined by $\pi (f \sigma) = f(1) \sigma$ is called the
\textbf{Shapiro map}. It is an epimorphism, since, for $f\in \Ind_{G_0}^G(A)$ and $\sigma\in G_0$, we have
\[
\pi(f^\sigma)=f(\sigma)=f(1)^\sigma = \pi(f)^\sigma.
\]
\end{definition}

\begin{definition}
A tower of fields
\[
K\subseteq L_0 \subseteq L\subseteq F\subseteq \Fhat
\]
is said to \textbf{realize} the twisted wreath product $A\wr_{G_0} G$ if $\Fhat/K$ is a Galois extension with Galois group isomorphic to $A\wr_{G_0} G$ and the tower of fields corresponds to the subgroups (via taking fixed fields in $\Fhat$)
\[
A\wr_{G_0} G \geq \Ind_{G_0}^G(A)\rtimes G_0\geq  \Ind_{G_0}^G(A) \geq \ker \pi \geq 1.
\]
We note that the extension $F/L_0$ is Galois with group $A\rtimes G_0$, and the restriction map $\gal(\Fhat/L_0)\to \gal(F/L_0)$ is the corresponding Shapiro map of $A\wr_{G_0}G$. For more details see \cite[Remark~13.7.6]{FriedJarden2005} or \cite[Remark~1.2]{Haran1999Invent}.
\end{definition}

Let $\Lambda \leq \Gamma$ be profinite groups. Consider a  finite split embedding problem $(\mu_1\colon \Lambda \to G_1, \beta_1\colon A\rtimes G_1\to G_1)$ for $\Lambda$.  To use the Haran-Shapiro induction, first we choose an open normal subgroup $\Delta$ of $\Gamma$ such that $\Delta\cap \Gamma\leq \ker \mu_1$.
Let $\mu\colon \Gamma\to G:=\Gamma/\Delta$ be the quotient map and $\mu_0=\mu|_{\Lambda}\colon \Lambda\to G_0:=\Lambda/\Lambda\cap\Delta$.  Then $\mu_1$ factors as $\mu_1 = \mugag_1\mu_0$. Moreover we have the following embedding problems
\begin{equation}
\label{eq:EPinHSI}
\xymatrix@C=40pt{
    &\Lambda\ar[d]_{\mu_0} \ar@/^10pt/[ddd]^{\mu_1}\\
\Ind_{G_0}^G(A)\rtimes G_0 \ar[r]^{\beta} \ar[d]^{\pi}
    &G_0\ar@{=}[d]\\
A\rtimes G_0 \ar[r]^{\beta_0}\ar[d]^{\rho}
    &G_0\ar[d]_{\mugag_1}\\
A\rtimes G_1 \ar[r]^{\beta_1}
    &G_1.
}
\qquad
\xymatrix@C=40pt{
    &\Gamma\ar[d]^{\mu}\\
A\wr_{G_0} G \ar[r]^{\alpha}
    &G.
}
\end{equation}
Here $\beta$ is the restriction of $\alpha$ to $\Ind_{G_0}^G(A)\rtimes G_0$, $\pi$ is the Shapiro map, $A\rtimes G_0 \cong (A\rtimes G_1)\times_{G_1} G_0$, and $\rho,\beta_0$ are the projection maps.
Note the the lower square in the left diagram is the fiber product embedding problem.

Now, if $\theta\colon \Gamma\to A\wr_{G_0} G$ is a weak solution, then $\theta_0 = \rho\pi \theta|_{\Lambda}$ is a weak solution of the original embedding problem $(\mu_1,\beta_1)$. The problem here is that if $\theta$ is a solution, i.e., surjective, it does not imply that $\theta_0$ is a solution. So we are lead to Haran's contribution to the method: Under some conditions on the subgroup $\Lambda$ (e.g., open, $\Gamma'\leq \Lambda$, or most generally, contained in a diamond) the Haran-Shapiro induction gives that, taking $\Delta$ small enough, or equivalently, $G$ large enough, we have
\[
\theta \mbox { surjective} \Longrightarrow \theta_0  \mbox { surjective}.
\]
Moreover, if $\{\theta_i \mid i\in I\}$ is a family of pairwise-independent solutions of $(\mu,\alpha)$, then the induced solutions $\{\theta_{i0}  \mid i\in I\}$ of $(\mu_1, \beta_1)$ are also pairwise-independent.

Now we show that these constructions carry over to the field theoretic setting.
For this assume that $M$ is a separable extension of an \emph{infinite} field $K$, $\Gamma = \gal(K)$, and $\Lambda = \gal(M)$. Also denote by $L$ the fixed field of $\Delta$, i.e., $G = \gal(L/K)$.

\begin{lemma}
\label{lem:wre_prod}
If $(\mu_1,\beta_1)$ is a rational embedding problem for $M$, then there exists a finite separable extension $K'/K$ such that  if $K'\subseteq L$, then all the embedding problems in \eqref{eq:EPinHSI} are rational, and all maps are restriction maps. Moreover, if a weak solution $\theta$ of $(\mu,\alpha)$ is geometric, then the induced weak solution $\theta_0 = \rho\pi\theta|_{\gal(M)}$ of $(\mu_1,\beta_1)$ is also geometric.

More precisely, assume that $A\rtimes G_1 = \gal(F_1/M(x))$, $G_1 = \gal(N_1/M)$ for $N_1 = F_1\cap M_s$, and $\beta_1,\mu_1$ are the restriction maps. Then there exists a finite separable extension $K'/K$ such that for every finite Galois extension $L$ of $K$ that contains $K'$ we have:
\begin{enumerate}
\item
\label{lem:wre_prod_a}
$N_1 \subseteq ML$.
\end{enumerate}
Let $L_0=L\cap M$ and $G_0 = \gal(ML/L) \cong \gal(L/L_0)$.
\begin{enumerate}\addtocounter{enumi}{1}

\item
\label{lem:wre_prod_b}
There exist $x_0\in L_0(x)$ such that $L_0(x_0) = L_0(x)$ and  a tower of fields 
\[
K(x_0) \subseteq L_0(x) \subseteq L(x) \subseteq F \subseteq \Fhat
\]
that realizes $A\wr_{G_0} G$. In particular, we identify $\gal(\Fhat/K(x_0)) = A\wr_{G_0} G$.
\item
\label{lem:wre_prod_c}
$\Fhat$ is regular over $L$.
\item
\label{lem:wre_prod_d}
Let $\Fhat_0 = \Fhat M$, $F_0 = FM$, and $N_0 = LM$. Then $N_0/L$, $\Fhat_0/M(x)$, and $F_0/M(x)$ are Galois, we have
\begin{align*}
\gal(N_0/M) &\cong G_0,\\
\gal(\Fhat_0/ M(x)) &\cong \Ind_{G_0}^G (A) \rtimes G_0,\\
\gal(F_0/M(x))& \cong A\rtimes G_0,
\end{align*}
and under these identification the maps in \eqref{eq:EPinHSI} are the restriction maps.
\item
\label{lem:wre_prod_e}
Let $\theta\colon \gal(K) \to A\wr_{G_0} G$ be a geometric weak solution, i.e., $\theta = \phihat^*$ for some place $\phihat$ of $\Fhat$ that is unramified over $K(x)$, and extends a specialization $x\mapsto a\in K$. Extend $\phihat$ to an $M$-place $\psihat$ of $\Fhat_0$. Then $\theta_0 := \rho\pi\theta|_{\gal(M)}  = (\psihat|_{F_1})^*$.
\end{enumerate}
\end{lemma}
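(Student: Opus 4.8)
The strategy is to build everything explicitly over $K(x_0)$ using Pop's theorem on split embedding problems over ample fields combined with the machinery of rational embedding problems developed earlier, and then to verify that the Haran–Shapiro constructions are literally restriction maps between function fields, so that geometricity is automatic. Since $K$ is infinite, we may apply Lemma~\ref{lem:basic-rat-eps} freely. I would proceed as follows.

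First, for \eqref{lem:wre_prod_a}: the rational embedding problem $(\mu_1,\beta_1)$ for $M$ has $A\rtimes G_1 = \gal(F_1/M(x))$ with $N_1 = F_1\cap M_s$ a finite Galois extension of $M$; write $N_1 = M N_1'$ for a suitable finite separable $N_1'/K$, and absorb $N_1'$ (together with the field $N_0$ furnished by Lemma~\ref{lem:basic-rat-eps}\eqref{case:fin-pro-geo-sol-split}, applied to make $(\mu_1,\beta_1)$ split-compatible after base change) into the required finite separable extension $K'/K$. Any finite Galois $L \supseteq K'$ then satisfies $N_1 \subseteq ML$. For \eqref{lem:wre_prod_b}, I would invoke the realization of twisted wreath products as towers of function fields: the field $K(x_0)$ arises because passing from $\gal(F_1/M(x))$ to the twisted wreath product requires a change of variable over $L_0$ (this is exactly the mechanism in \cite[Remark~13.7.6]{FriedJarden2005} and the rational setup of \cite{Bary-SorokerPACext}); one chooses $x_0$ a Möbius transform of $x$ over $L_0$ with $L_0(x_0)=L_0(x)$ so that the Galois extension $F/L(x)$ realizing $A\rtimes G_0$ and its extension $\Fhat$ realizing $\Ind_{G_0}^G(A)\rtimes G_0$ (hence $A\wr_{G_0}G$ over $K(x_0)$) exist. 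Here one uses that $G_0 = \gal(ML/L)\cong\gal(L/L_0)$ by linear disjointness of $M$ and $L$ over $L_0$, and that $A\rtimes G_0 \cong (A\rtimes G_1)\times_{G_1}G_0$ by \eqref{lem:wre_prod_a} and Lemma~\ref{lem:basic-rat-eps}\eqref{case:fib-pro-ep-is-rat}. The regularity statement \eqref{lem:wre_prod_c}, $\Fhat$ regular over $L$, follows because $\Fhat$ is built as a compositum of geometric (regular) extensions of $L(x_0)$ via Pop's theorem applied over the ample field $L$ — each split embedding problem over $L(x_0)$ is regularly solvable, so the constants stay in $L$.

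For \eqref{lem:wre_prod_d}, the point is that base-changing the tower of \eqref{lem:wre_prod_b} by $M$ — i.e. taking composita with $M$ — turns it into the tower realizing the left-hand column of \eqref{eq:EPinHSI} over $M(x)$. Regularity of $\Fhat/L$ from \eqref{lem:wre_prod_c} guarantees that $\Fhat$ and $M$ are linearly disjoint over the appropriate base, so $\gal(\Fhat_0/M(x)) \cong \gal(\Fhat/L(x))\times_{\gal(L/L_0)}\gal(ML/M) \cong \Ind_{G_0}^G(A)\rtimes G_0$, and similarly for $F_0$ and $N_0$; that the maps $\beta,\pi,\rho,\beta_0$ are the restriction maps is then immediate from the identification of the tower, using that the Shapiro map $\gal(\Fhat/L_0(x))\to\gal(F/L_0(x))$ is a restriction map by construction of the realizing tower. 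Finally \eqref{lem:wre_prod_e} is the clean part: given a geometric weak solution $\theta=\phihat^*$ with $\phihat$ a place of $\Fhat$ unramified over $K(x)$ lying over $x\mapsto a$, extend $\phihat$ to an $M$-place $\psihat$ of $\Fhat_0$; since the correspondence $\Phi\mapsto\Phi^*$ respects restrictions (as in Lemma~\ref{lem:basic-rat-eps}\eqref{case:fib-pro-geo-sol} and Lemma~\ref{lem:fiber_split}), and since under the identifications of \eqref{lem:wre_prod_d} the composite $\rho\pi$ restricted to $\gal(M)$ is precisely the restriction map $\gal(\Fhat_0/M(x))\to\gal(F_1/M(x))$, we get $\theta_0 = \rho\pi\theta|_{\gal(M)} = (\psihat|_{F_1})^*$, which is geometric by definition.

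The main obstacle is \eqref{lem:wre_prod_b}–\eqref{lem:wre_prod_c}: producing the realizing tower $K(x_0)\subseteq L_0(x)\subseteq L(x)\subseteq F\subseteq\Fhat$ with $\Fhat$ regular over $L$, i.e. showing that the twisted wreath product $A\wr_{G_0}G$ is realized by a \emph{rational} Galois extension of $K(x_0)$ whose constant field is exactly $L$. This requires carefully combining the abstract realization of $A\wr_{G_0}G$ over $L_0(x)$ (via an auxiliary split embedding problem over the ample field $L_0$, whose regular solvability is Pop's theorem) with the descent of the base field from $L_0(x)$ to $K(x_0)$ — the change of variable $x\mapsto x_0$ is exactly what makes the constant field drop from $L_0$ to $K$ while keeping the extension rational. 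Everything else is bookkeeping with fiber products, restriction maps, and linear disjointness, all of which is already packaged in Lemma~\ref{lem:basic-rat-eps}, Lemma~\ref{lem:basic-rat-eps_2}, and Lemma~\ref{lem:fiber_split}.
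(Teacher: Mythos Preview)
Your outline for parts \eqref{lem:wre_prod_a}, \eqref{lem:wre_prod_d}, and \eqref{lem:wre_prod_e} is essentially correct and matches the paper. The problem is in your plan for \eqref{lem:wre_prod_b}--\eqref{lem:wre_prod_c}, where there are two genuine gaps.

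First, you repeatedly invoke Pop's theorem over the ``ample field $L$'' (and $L_0$). But the lemma carries no ampleness hypothesis: $K$ is only assumed infinite, and the lemma is applied in Theorem~\ref{thm:sep-ext} to an arbitrary fully Hilbertian field, which need not be ample (nor need finite extensions $L_0$, $L$ of it be). So you cannot appeal to regular solvability of split embedding problems here. The paper instead uses Haran's explicit construction \cite[Lemma~3.1]{Haran1999Invent}: one picks a primitive element $y$ for the fixed field of the section $G_1\hookrightarrow\gal(F_1/M(x))$, with irreducible polynomial $f(x,Y)\in M[x,Y]$, descends $f$ to $L_0[x,Y]$ by enlarging $K'$, and then substitutes $x\mapsto \sum c_i x_i$ for a $K$-basis $c_1,\ldots,c_n$ of $L_0$. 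This yields a multi-variable tower over $K(x_1,\ldots,x_n)$ realizing $A\wr_{G_0}G$ with $\Fhat'$ regular over $L$; one then specializes to a single variable $x_0$ via Bertini--Noether and Matsusaka--Zariski. No ampleness is needed anywhere.

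Second, and more subtly: even if you had some regular realization of $A\wr_{G_0}G$ over $K(x_0)$, you give no reason why the intermediate field $F$ (the fixed field of $\ker\pi$) should, after base change by $M$, recover $F_1N_0$. Without this, the map $\rho$ in \eqref{eq:EPinHSI} cannot be identified with the restriction map $\gal(F_0/M(x))\to\gal(F_1/M(x))$, and \eqref{lem:wre_prod_e} breaks down: $\psihat|_{F_1}$ would not even make sense. The paper's construction secures this compatibility precisely because $F$ is generated over $L(x)$ by a root of the \emph{same} polynomial $f(x,Y)$ that defines $F_1$ over $N_1(x)$; hence $F_0=FM=N_0(x,y)\supseteq N_1(x,y)=F_1$, and the restriction is the one you want.
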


\begin{proof}
Since the restriction map $\gal(F_1/M(x)) \to \gal(N_1/M)$ splits, we can lift $G_1 = \gal(N_1/M)$ to a subgroup of $\gal(F_1/M(x))$. Let $E$ be its fixed field. Then $E\cap N_1 = M$ and $EN_1 = F_1$. Let $y\in E$ be a primitive element of $E/M(x)$, i.e., $E = M(x,y)$. Then $y$ is also a primitive element of $F_1/N_1(x)$. Then the irreducible polynomial $f(x,Y)\in M[x,Y]$ of $y$ over $M(x)$ is Galois over $N_1(x)$.

Let $K'/K$ be a finite extension such that $N_1 \subseteq MK'$, the coefficients of $f(x,Y)$  are in $K'\cap M$, and $f(x,Y)$ is Galois over $K'(x)$. Let $L$ be a finite Galois extension of $K$ that contains $K'$.
Then, in particular, \eqref{lem:wre_prod_a} holds, $f(x,Y)\in L_0[x,Y]$ for $L_0 = L\cap M$, and $f(x,Y)$ is Galois over $L(x)$.
Choose a basis $c_1, \ldots, c_n$ of $L_0/K$ and let $x_0, x_1, \ldots, x_n$ be an $(n+1)$-tuple of variables.

Now by \cite[Lemma~3.1]{Haran1999Invent} there exist fields $F',
\Fhat'$ such that (denoting $\bfx = (x_1,\ldots, x_n)$)
\[
K(\bfx) \subseteq L_0(\bfx) \subseteq L(\bfx) \subseteq F'\subseteq \Fhat'
\]
realizes $A\wr_{G_0} G$ and $\Fhat'$ is regular over $L$. Furthermore, $F'$ is generated by a root of $f(\sum_{i=1}^n c_i x_i, Y)$ over $L(\bfx)$.

A usual combination of the Bertini-Neother Lemma and the Matsusaka-Zariski Theorem (see, e.g., \cite[Lemma~2.4]{Bary-Soroker2008IMRN}) implies that we can substitute $x_i$ with $\alpha_i+\beta_i x_0$ for some $\alpha_i,\beta_i\in K^\times$ and extend this substitution to a $K$-place of $\Fhat'$ with residue field $\Fhat$ with the same properties. That is, if we denote by $F$ the residue field of $F'$, then
\[
K(x_0) \subseteq L_0(x_0) \subseteq L(x_0) \subseteq F \subseteq \Fhat
\]
realizes $A\wr_{G_0} G$, $\Fhat$ is regular over $L$, and $F$ is generated by a root of $f(\sum_{i=1}^n c_i (\alpha_i + \beta_i x_0),Y)\in L_0[x_0,Y]$ over $L(x_0)$. Since $c_1,\ldots,c_n$ are linearly independent over $K$, we have that $\sum_{i=1}^n \beta_ic_i\neq 0$. Therefore $u=\sum_{i=1}^n c_i (\alpha_i + \beta_i x_0)$ is a transcendental element and $L_0(x_0) = L_0(u)$. We may apply an isomorphism that sends $u$  to $x$ to assume that 
\[
K(x_0) \subseteq L_0(x) \subseteq L(x) \subseteq F \subseteq \Fhat
\]
realizes $A\wr_{G_0} G$, $\Fhat$ regular over $L$, and $F$ is generated by the root $y$ of $f(x,Y)$. This finishes the proof of \eqref{lem:wre_prod_b} and \eqref{lem:wre_prod_c}.

Let $N_0=ML$, $F_0 = F M $, $\Fhat_0 = \Fhat L$, as in \eqref{lem:wre_prod_d}.
Note that $F_0  = FM =K(x,y) LM= N_0(x,y) $.
Since $\Fhat$ is regular over $L$, we have $L_0 = \Fhat\cap M = L\cap M$, and thus $\gal(N_0/M) \cong \gal(L/L_0)=G_0$. Similarly we have
\[
\gal(\Fhat_0/M(x)) \cong \gal(\Fhat/L_0(x)) \cong  \Ind_{G_0}^G(A)\rtimes G_0
\]
and $\gal(F_0/M(x)) \cong \gal(F/L_0(x)) \cong A\rtimes G_0 $. Since all isomorphisms are defined canonically, the maps in \eqref{eq:EPinHSI} are the restriction maps.

It remains to prove \eqref{lem:wre_prod_e}. Let $\phihat^*\colon \gal(K) \to \gal(\Fhat/K(x))$ be a weak geometric solution. Extend $\phihat$, $M$-linearly, to a place $\psihat$ of $\Fhat_0$. Then $\psihat$ is unramified over $M(x)$, and $\phihat^*|_{\gal(M)} = \psihat^*$. Moreover, since all maps in \eqref{eq:EPinHSI} are the restriction maps, we get that $\rho\pi (\phihat^*|_{\gal(M)}) = \rho\pi (\psihat^*) = (\psihat|_{F_1})^*$.
\end{proof}

\subsection{Separable extensions of fully Hilbertian fields}
\label{sec:sep-ext}
The following result is more general than Theorem~\ref{IT:separable}, cf.\ Main Theorem of \cite{Bary-SorokerHaranHarbater}.
\begin{theorem}
\label{thm:sep-ext}
Let $M$ be a separable extension of a fully Hilbertian field $K$. Then each of the following conditions suffices for $M$ to be fully Hilbertian.
\begin{enumerate}
\item
\label{case:sep-ext-finite}
$[M:K]<\infty$.
\item
\label{case:sep-ext-fin-gen}
The Galois group $\Gamma$ of the Galois closure of $M/K$ is finitely generated.
\item
The cardinality of the set of all finite subextensions of $M/K$ is less than $|M|$.
\item
$M$ is a proper finite extension of a Galois extension $N/K$.
\item
$M$ is an abelian extension of $K$.
\item
\label{case:sep-ext-diamond}
There exist Galois extensions $M_{1}, M_{2}$ of $K$ such that $M\subseteq M_{1}M_{2}$, but $M\not\subseteq M_{i}$ for $i=1,2$.
\item $M$ is contained in a Galois extension $N/K$ with pronilpotent group and $[M:K]$ is divisible by at least two primes.
\item $M/K$ is sparse.
\item $[M:K] = \prod p^{\alpha(p)}$, where $\alpha(p)<\infty$ for all $p$.
\end{enumerate}
\end{theorem}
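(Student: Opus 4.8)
The plan is to deduce the theorem from its group-theoretic twin, the Main Theorem of \cite{Bary-SorokerHaranHarbater}, by transcribing that proof into the language of rational embedding problems and geometric solutions. Writing $\Gamma = \gal(K)$ and $\Lambda = \gal(M)$, one first notes that $K$ is infinite (it is fully Hilbertian) and that $|M| = |K| =: m$ since $M/K$ is separable algebraic. Under the Galois correspondence each of the conditions (a)--(i) becomes exactly one of the hypotheses on the closed subgroup $\Lambda \leq \Gamma$ under which \cite{Bary-SorokerHaranHarbater} proves that a subgroup of a semi-free group of rank $m$ is again semi-free of rank $m$ (open subgroup; finitely generated ``Galois closure''; few open overgroups; the Weissauer condition; abelian; diamond; pronilpotent with at least two prime divisors; sparse; prescribed degree factorisation). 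So it suffices to show: if $K$ is fully Hilbertian and $\Lambda$ satisfies one of these group-theoretic hypotheses, then $M$ is fully Hilbertian.

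For this I would argue as follows. By Theorem~\ref{thm:equiv-def} and Lemma~\ref{lem:splitep} it is enough to find, for a given non-trivial rational finite split embedding problem $(\mu_1\colon \Lambda \to G_1,\ \beta_1\colon A\rtimes G_1 \to G_1)$ for $M$, a family of $m$ pairwise-independent geometric solutions. Run the Haran--Shapiro induction: pick an open normal $\Delta \trianglelefteq \Gamma$ with $\Delta \cap \Lambda \leq \ker\mu_1$ and fixed field $L$, and form the diagram \eqref{eq:EPinHSI}, whose top row is an embedding problem $(\mu\colon \Gamma \to G,\ \alpha\colon A\wr_{G_0} G \to G)$ over $K$ with $G = \gal(L/K)$. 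The group-theoretic content of \cite{Bary-SorokerHaranHarbater} guarantees that, under the hypothesis on $\Lambda$, one can take $\Delta$ small enough that $(\mu,\alpha)$ is non-trivial and every proper solution $\theta$ of $(\mu,\alpha)$ yields a proper solution $\theta_0 = \rho\pi\,\theta|_\Lambda$ of $(\mu_1,\beta_1)$, with pairwise-independent $\theta_i$ producing pairwise-independent $\theta_{i0}$. As in the proof of Lemma~\ref{lem:<m_ep}, the groups $\gal(F/K(x))$ that occur stay of rank $<m$, so full Hilbertianity of $K$ may be applied even when $G$ is infinite.

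The two field-theoretic inputs are Lemmas~\ref{lem:fiber_split} and~\ref{lem:wre_prod}: the first keeps the fiber-product step of \eqref{eq:EPinHSI} rational and preserves geometric solutions, and the second supplies a finite separable $K'/K$ such that whenever $K' \subseteq L$ the whole diagram is realized over rational function fields and geometric weak solutions of $(\mu,\alpha)$ induce geometric weak solutions $\theta_0$ of $(\mu_1,\beta_1)$. Since enlarging the fixed field of $\Delta$ only helps both the group-theoretic ``$\Delta$ small enough'' requirement and the inclusion $K' \subseteq L$, I would fix a single $\Delta$ meeting all demands at once. Then, $K$ being fully Hilbertian, $(\mu,\alpha)$ has $m$ independent geometric solutions; the induced $\theta_{i0}$ are pairwise-independent proper geometric solutions of $(\mu_1,\beta_1)$, and a subfamily of cardinality $m$ is independent by \cite[Proposition~3.6]{Bary-SorokerHaranHarbater}. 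Hence $M$ is Galois fully Hilbertian, so by Theorem~\ref{thm:equiv-def} fully Hilbertian, and all nine cases follow simultaneously.

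The main obstacle is not any single step but the faithfulness of the transcription: one must be sure that the entire Haran--Shapiro machinery of \cite{Bary-SorokerHaranHarbater} --- built there as a bookkeeping of finite split embedding problems for profinite groups --- survives verbatim once ``embedding problem'' is read as ``rational embedding problem'' and ``solution'' as ``geometric solution''. The genuinely new verifications are precisely those isolated in Lemmas~\ref{lem:fiber_split} and~\ref{lem:wre_prod} (fiber products and twisted wreath products of rational embedding problems are again rational, and the Shapiro map carries geometric solutions to geometric solutions); granting these, the diamond combinatorics, the choice of $\Delta$, the rank bookkeeping, and the independence arguments are identical to the group-theoretic ones. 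A secondary point to watch is that the reduction to split embedding problems (Lemma~\ref{lem:splitep}) and the passage between pairwise-independent and independent families are invoked exactly where the group-theoretic proof uses their analogues.
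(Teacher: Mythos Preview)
Your proposal is correct and follows essentially the same approach as the paper: reduce to split rational embedding problems via Lemma~\ref{lem:splitep}, apply the Haran--Shapiro induction with $L$ chosen large enough to contain both the $K'$ of Lemma~\ref{lem:wre_prod} and the fixed field dictated by the group-theoretic argument of \cite{Bary-SorokerHaranHarbater}, and then use that lemma to certify the induced solutions are geometric. One small remark: since $\Delta$ is open in $\Gamma$, the group $G=\Gamma/\Delta$ is automatically finite, so your aside about ``rank $<m$'' and infinite $G$ is unnecessary; also, the final appeal to \cite[Proposition~3.6]{Bary-SorokerHaranHarbater} is already absorbed into Lemma~\ref{lem:splitep}, which asks only for pairwise-independent solutions.
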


\begin{proof}
Let $m=|K|$ and $(\mu_1 \colon \gal(M) \to \gal(N_1/M), \alpha \colon \gal(F_1/M(x)) \to \gal(N_1/M))$ be a rational split embedding problem for $M$. Then if we write $G_1 = \gal(N_1/M)$, then $\gal(F_1/M(x)) \cong A\rtimes G_1$ and $\alpha$ the quotient map. It suffices to find a set of $m$ pairwise-independent geometric solutions of $(\mu_1,\beta_1)$.

Let $K'/K$ be the finite separable extension given in Lemma~\ref{lem:wre_prod}. Choose a  `large' finite Galois extension $L/K$ such that $K'\subseteq L$ and let $\mu\colon \gal(K) \to G:=\gal(L/K)$ be the restriction map.
In the notation of  Lemma~\ref{lem:wre_prod} the embedding problem
\[
(\mu\colon \gal(K) \to G, \alpha\colon A\wr_{G_0} G \to G)
\]
is rational, and hence has a set of $m$ independent geometric solutions $\{\theta_i\mid i\in I\}$ of $(\mu,\alpha)$.

Cases~\eqref{case:sep-ext-finite} and \eqref{case:sep-ext-fin-gen}:
Note that \eqref{case:sep-ext-finite} is a special case of \eqref{case:sep-ext-fin-gen}. In \cite[Section~5.1.2]{Bary-SorokerHaranHarbater} it is proved that there exists an open normal subgroup $\Lambda$ of $\gal(K)$ (in the notation of loc.\ cit.\ $L=\Lambda$) such that if $\gal(L) \leq \Lambda$, then $S=\{\rho\pi \theta_i \mid i\in I\}$ is  a set of pairwise-independent solutions of $(\mu_1,\beta_1)$. Choose $L$ sufficiently large, so that it contains both $K'$ and the fixed field of $\Lambda$. Then on the one hand $S$ is a set of pairwise-independent solutions and on the other hand the solutions in $S$ are geometric (Lemma~\ref{lem:wre_prod}).

Case~\eqref{case:sep-ext-diamond}:
As in the previous case, in \cite[Section~5.1.3]{Bary-SorokerHaranHarbater} it is proved, basing on the proof of \cite[Theorem~25.4.3]{FriedJarden2005}, that there exists a normal subgroup $\Lambda$ of $\gal(K)$ such that if $\gal(L)\leq \Lambda$, then $S$ (as above) is a set of pairwise-independent solutions of $(\mu_1,\beta_1)$. Hence the same argument as above finishes the proof.

The proof of all the other cases other follows from the above cases, as shown in \cite{Bary-SorokerHaranHarbater} or proved in a similar manner.
\end{proof}

\begin{remark}
In the proof above, the only new property we exploited is that the
constructions of fiber products and twisted wreath product, and the
induced solutions, are field theoretic.
\end{remark}

\section{Purely inseparable extensions}
\label{sec:p.i.}
In the previous section we dealt with separable extensions of a fully Hilbertian field, using the twinning principle. Here we show that full Hilbertianity, as Hilbertianity, is preserved under purely inseparable extensions. This will be useful later, for example in order to prove that full Hilbertianity is preserved under all finite extensions.

\begin{proposition}
\label{prop:p.i-ful-hil}
Let $K$ be a fully Hilbertian field and $E/K$ a purely inseparable
extension. Then $E$ is fully Hilbertian.
\end{proposition}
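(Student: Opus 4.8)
The plan is to reduce a rational embedding problem for $E$ to a rational embedding problem for $K$ using the fact that $E/K$ is purely inseparable, hence $\gal(E) = \gal(K)$ canonically, and that the separable closures satisfy $E_s = E\cdot K_s$ with the restriction map $\gal(E)\to\gal(K)$ an isomorphism. First I would take a non-trivial rational finite embedding problem $(\mu_1\colon \gal(E)\to\gal(N_1/E),\ \alpha_1\colon\gal(F_1/E(x))\to\gal(N_1/E))$ for $E$, with $F_1/E(x)$ generated by a root of an irreducible $f\in E[x,Y]$ separable in $Y$. Since $E/K$ is purely inseparable, there is a power $q=p^\nu$ of the characteristic such that the coefficients of $f$, raised to the $q$-th power, lie in $K$; equivalently, after the substitution replacing each coefficient $c$ by $c^q$ and $x$ by a suitable variable, one gets a polynomial $g\in K[x,Y]$ whose splitting behavior over $K(x)$ mirrors that of $f$ over $E(x)$. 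Concretely, $N_1 = E\cdot N_1'$ and $F_1 = E\cdot F_1'$ for a rational embedding problem $(\mu\colon\gal(K)\to\gal(N_1'/K),\ \alpha\colon\gal(F_1'/K(x))\to\gal(N_1'/K))$ over $K$, with the vertical and horizontal maps identified via the isomorphisms $\gal(E)\cong\gal(K)$, $\gal(F_1/E(x))\cong\gal(F_1'/K(x))$, $\gal(N_1/E)\cong\gal(N_1'/K)$ (these identifications use that $E/K$, and hence $E(x)/K(x)$, is purely inseparable, so Galois groups of separable extensions are unchanged).

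Next I would invoke full Hilbertianity of $K$ — more precisely the characterization in Theorem~\ref{thm:equiv-def} that $K$ is Galois fully Hilbertian — to obtain a set $\{\phi_i^*\mid i\in I\}$ of $|K|$ independent geometric solutions of the rational embedding problem over $K$, where $\phi_i$ is a $K$-place of $F_1'$ lying over some $x\mapsto a_i\in K$, unramified over $K(x)$, with residue field $K$. The key point is then that each such place extends to a place $\psi_i$ of $F_1 = E\cdot F_1'$, which is an $E$-place lying over $x\mapsto a_i\in E$ (identifying $a_i$ with its image in $E$), still unramified over $E(x)$ since purely inseparable base change does not affect ramification of the separable extension $F_1'/K(x)$, and with residue field $E$. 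Thus $\psi_i^*\colon\gal(E)\to\gal(F_1/E(x))$ is a geometric weak solution; surjectivity of $\psi_i^*$ follows from surjectivity of $\phi_i^*$ together with the canonical identifications (the decomposition group is all of $\gal(F_1/E(x))\cong\gal(F_1'/K(x))$). Finally, independence: the solution fields of the $\psi_i^*$ inside a separable closure of $E$ are exactly $E\cdot$(solution fields of the $\phi_i^*$), and since the latter are linearly disjoint over $N_1'$ and $E/K$ is purely inseparable (so $E$ is linearly disjoint from any separable extension of $K$ over $K$), the former are linearly disjoint over $N_1 = E\cdot N_1'$. Hence $E$ has $|K| = |E|$ independent geometric solutions and is Galois fully Hilbertian, so fully Hilbertian by Theorem~\ref{thm:equiv-def}.

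The main obstacle I expect is the bookkeeping in the first step: carefully setting up the descent of the polynomial $f$ (equivalently, the rational embedding problem) from $E(x)$ to $K(x)$ via a uniform $q$-th power, and verifying that the Galois-theoretic data transports correctly under the isomorphism $\gal(E\text{-side})\cong\gal(K\text{-side})$ — in particular that a primitive element and its minimal polynomial descend, that the needed extension is still regular (the algebraic closure of $K$ in the descended function field is the separable-closure image of $L$, and $E\cdot(\cdot) $ recovers $L$), and that one may choose the $q$-th-power substitution compatibly with the variable $x$. The ramification and residue-field claims, and the linear disjointness argument, are then routine given the standard fact that a purely inseparable extension is linearly disjoint from every separable extension; I would cite \cite[Chapter~2]{FriedJarden2005} for these. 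One should also note the trivial case where the embedding problem becomes trivial (i.e.\ $F_1 = N_1(x)$), handled exactly as in the remark following Definition~\ref{def:ful-hil-gen}.
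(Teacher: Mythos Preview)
Your approach is essentially the paper's: descend the rational embedding problem from $E$ to $K$, solve there, then push the geometric solutions back up by $E$-linear extension of places. The second half of your plan (extending each $\phi_i$ to an $E$-place $\psi_i$, checking surjectivity and independence via the canonical isomorphisms and the linear disjointness of separable and purely inseparable extensions) matches the paper's proof line for line.

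The one point where you diverge is the descent itself, and here your description is shakier than it needs to be. Raising the coefficients of $f$ to a $q$-th power and substituting a new variable does not literally produce a field $F_1'$ with $F_1 = E\cdot F_1'$: if $y$ is a root of $f(x,Y)$ then $y^q$ is a root of $\sum c_{ij}^q T^i Z^j$ at $T=x^q$, so you land over $K(x^q)$, and $E\cdot K(x^q,y^q)$ is not $E(x,y)$. You flag this yourself as the ``main obstacle,'' and it is genuinely awkward to make the coefficient-power route precise. The paper sidesteps it entirely: since $F_1/E(x)$ is Galois and $E(x)/K(x)$ is purely inseparable, $F_1/K(x)$ is normal; let $F$ be the maximal separable subextension of $K(x)$ in $F_1$. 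Then $F/K(x)$ is Galois, $F_1/F$ is purely inseparable, and since $F_1/E(x)$ is separable one gets $F_1 = F\cdot E$ immediately. The same one-liner gives $N_1 = L\cdot E$ for $L$ the maximal separable subextension of $K$ in $N_1$, and the restriction maps furnish the required isomorphisms of Galois groups. This also handles infinite $E/K$ without needing a uniform $q$. Replace your first paragraph with this argument and the proof is complete; the regularity of $F/K$ then follows exactly as you outline, from $F$ being separable over $K$ and $F\cdot E = F_1$ being regular over $E$.
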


\begin{proof}
Let $(\nu\colon \gal(E) \to \gal(M/E), \alpha \colon \gal(P/E(x)) \to \gal(M/E))$ be a rational embedding problem for $E$.
Since $E/K$ is purely inseparable and $F/E(x)$, $M/E$ are Galois, the extensions $F/K(x)$, $M/K$ are normal. Let $F$ (resp.\ $L$) be the maximal separable extension of $K(x)$ (resp.\ $K$) in $P$ (resp.\ $M$). Then $P=FE$ and $M=LE$. 
\[
\xymatrix{
E(x)\ar@{-}[r]
	&M(x)\ar@{-}[r]
		&P\\
K(x)\ar@{-}[r]\ar@{-}[u]
	&L(x)\ar@{-}[r]\ar@{-}[u]
		&F\ar@{-}[u].
}
\]
Then $P/F$ and $M/L$ are purely inseparable and $\gal(F/K(x)) \cong \gal(P/E(x))$, $\gal(L/K) \cong \gal(M/E)$ via the restriction maps.

Note that $P=FE$ is regular over $E$, i.e., linearly disjoint of the algebraic closure $\Egal$ of $E$ over $E$. As $F/K$ is separable and $E/K$ purely inseparable, $F$ and $E$ are linearly disjoint over $K$. Thus $F$ and $\Egal ( = \Kgal)$ are linearly disjoint. Thus $F$ is regular over $K$.

Let $\mu\colon \gal(K) \to \gal(L/K)$ and $\beta\colon \gal(F/K(x)) \to \gal(L/K)$ be the restriction maps.
As $K$ is fully Hilbertian, there exists a set $\{\phi^*_i \mid i\in I\}$ of independent solutions of $(\mu,\beta)$ with $|I|=|K|$. Extend $E$-linearly each $\phi_i$ to a place $\psi_i$ of $P$. Since $\mu = \nu \res_{E_s,K_s}$ and $\beta = \alpha \res_{P,F}$, and since the restriction maps $\res_{E_s,K_s} \colon \gal(E) \to \gal(K)$, $\res_{P,F}\colon \gal(P/E(x)) \to \gal(F/K(x))$ are isomorphisms, we get that $\{\psi^*_i\mid i\in I\}$ is a set of independent solutions of $(\nu,\alpha)$ of cardinality $|I|=|K|=|E|$.
\end{proof}

\begin{corollary}
\label{cor:finite-ext}
A finite extension of a fully Hilbertian field is fully Hilbertian.
\end{corollary}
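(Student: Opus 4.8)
The plan is to factor the given finite extension through its maximal separable subextension and then apply the two permanence results already proved: Theorem~\ref{thm:sep-ext}\eqref{case:sep-ext-finite} for the separable part and Proposition~\ref{prop:p.i-ful-hil} for the purely inseparable part.

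Concretely, let $M/K$ be a finite extension with $K$ fully Hilbertian, and set $M_0 = M\cap K_s$, the separable closure of $K$ in $M$. Then $M_0/K$ is a finite separable extension, and $M/M_0$ is purely inseparable: for each $\gamma\in M$, writing the minimal polynomial of $\gamma$ over $K$ as $f(X)=g(X^{p^n})$ with $g$ separable (where $p=\operatorname{char}K$), the element $\gamma^{p^n}$ is separable over $K$, hence lies in $M_0$, so $\gamma$ is purely inseparable over $M_0$.

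Now I would apply Theorem~\ref{thm:sep-ext}\eqref{case:sep-ext-finite} to the finite separable extension $M_0/K$ to conclude that $M_0$ is fully Hilbertian, and then apply Proposition~\ref{prop:p.i-ful-hil} to the purely inseparable extension $M/M_0$ to conclude that $M$ is fully Hilbertian. (In characteristic $0$, and more generally whenever $K$ is perfect, one has $M_0=M$ and only Theorem~\ref{thm:sep-ext}\eqref{case:sep-ext-finite} is needed; the purely inseparable step is relevant only in positive characteristic.)

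There is no real obstacle here: the corollary is a formal consequence of the two cited results, the only thing requiring verification being the standard structural fact that $M/(M\cap K_s)$ is purely inseparable, which is immediate from the description of minimal polynomials in positive characteristic recalled above.
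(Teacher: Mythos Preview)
Your argument is correct and uses the same two ingredients as the paper, but applies them in the opposite order: the paper first passes to the maximal purely inseparable subextension $E$ of $K$ inside the given finite extension and asserts that the remaining step $L/E$ is separable, then invokes Proposition~\ref{prop:p.i-ful-hil} followed by Theorem~\ref{thm:sep-ext}\eqref{case:sep-ext-finite}. Your order---separable first, then purely inseparable---is the one that works without further hypotheses: for an arbitrary finite extension $M/K$ the tower $K\subseteq M\cap K_s\subseteq M$ always has separable bottom and purely inseparable top, whereas passing first through the maximal purely inseparable subextension $L_i$ need not leave a separable extension $M/L_i$ unless, for instance, $M/K$ is normal. (Take $K=\bbF_2(s,t)$ and $M=K(a)$ with $a^4+sa^2+t=0$; one checks $L_i=K$, yet $M/K$ is not separable.) So your route proves the corollary and quietly sidesteps a small gap in the paper's version of the argument.
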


\begin{proof}
Let $L/K$ be a finite extension. Let $E$ the maximal purely inseparable extension of $K$ inside $L$. Then $L/E$ is separable. By Proposition~\ref{prop:p.i-ful-hil} $E$ is fully Hilbertian and by Theorem~\ref{thm:sep-ext} $L$ is fully Hilbertian.
\end{proof}

\section{Dominating Embedding Problems}
This section provides some auxiliary results that reduce the necessary embedding problems needed for full Hilbertianity. This reduction is needed in the sequel in order to prove Theorem~\ref{IT:com-krull-dom}.

\subsection{Hurwitz's Formula}

Let $F/E$ be a finite separable extension of function fields over a base field $L$. Denote
by $g_F,g_E$ the genera of $F,E$, respectively, and let $n=[F:E]$.
Hurwitz's formula asserts that
\begin{equation}\label{hurwitz}
2g_F - 2 = n (2g_E - 2) + \deg R,
\end{equation}
where $R$ is the ramification divisor. 
If the characteristic of $L$ is $p>0$, then the primes of $F$ divide into two sets: the tamely ramified primes $P_{tr}=\{ \Pfrak \mid p\nmid e_\Pfrak\}$ and the wildly ramified primes $P_{wr} = \{ \Pfrak \mid p \mid e_\Pfrak\}$. If the characteristic of $L$ is $0$, then all the primes are, by definition, tamely ramified. We have
\begin{equation}\label{degR}
\deg R = \sum_{\Pfrak\in P_{tr}} (e_{\Pfrak}-1) + \sum_{\Pfrak\in P_{wr}} l_\Pfrak,
\end{equation}
and $l_\Pfrak > e_\Pfrak -1$, see \cite[Corollary~IV.2.4]{Hartshorne1977}.

If $\pfrak$ is a prime of $L(x)/L$, then either $\pfrak$ corresponds to some irreducible polynomial in $L[x]$, in this case we say that $\pfrak$ is finite (w.r.t.\ $x$), or $\pfrak$ corresponds to $\infty$, and then we write $\pfrak = \pfrak_\infty$.

The following result is an immediate consequence of Hurwitz's formula.

\begin{corollary}\label{cor:non-void_ram}
Let $F/L(x)$ be a separable extension of degree $n\geq 2$.  Assume that all primes of $F$ lying above $\pfrak_\infty$ are tamely ramified in $F$. Then there exists a prime $\Pfrak$ of $F$ lying over a finite prime $\pfrak$ of $L(x)$ such that $\Pfrak/\pfrak$ is ramified.
\end{corollary}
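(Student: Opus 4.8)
The plan is to apply Hurwitz's formula \eqref{hurwitz} directly, estimating the degree of the ramification divisor from below and deriving a contradiction if no finite prime ramifies. First I would recall that $L(x)$ has genus $g_{L(x)} = 0$, so \eqref{hurwitz} becomes $2g_F - 2 = -2n + \deg R$, i.e.\ $\deg R = 2g_F - 2 + 2n \geq 2n - 2 \geq n$, using $g_F \geq 0$ and $n \geq 2$. Thus the ramification divisor is nonempty; in fact $\deg R \geq n \geq 2 > 0$, so \emph{some} prime of $F$ ramifies over $L(x)$.

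Next, suppose toward a contradiction that every prime $\Pfrak$ of $F$ lying over a finite prime of $L(x)$ is unramified. Then all ramification is concentrated over $\pfrak_\infty$, and by hypothesis every prime of $F$ over $\pfrak_\infty$ is tamely ramified. Hence in \eqref{degR} the wild part is empty and $\deg R = \sum_{\Pfrak \mid \pfrak_\infty} (e_\Pfrak - 1)$. Now I would bound this sum: since $\sum_{\Pfrak \mid \pfrak_\infty} e_\Pfrak \leq \sum_{\Pfrak \mid \pfrak_\infty} e_\Pfrak f_\Pfrak = n$ (the fundamental identity, with equality when $F/L(x)$ is separable, which it is), we get
\[
\deg R = \sum_{\Pfrak \mid \pfrak_\infty} (e_\Pfrak - 1) = \Bigl(\sum_{\Pfrak \mid \pfrak_\infty} e_\Pfrak\Bigr) - \#\{\Pfrak \mid \pfrak_\infty\} \leq n - 1.
\]
This contradicts $\deg R \geq n$ obtained above. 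Therefore some prime $\Pfrak$ of $F$ over a finite prime $\pfrak$ of $L(x)$ must be ramified, which is the assertion.

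The only point requiring a little care is the use of the fundamental identity $\sum e_\Pfrak f_\Pfrak = n$ for primes above a fixed place — this holds because $F/L(x)$ is a finite separable extension of function fields over $L$ (so the corresponding extension of Dedekind domains is separable, hence the sum of local degrees equals the global degree at every place). I do not anticipate a genuine obstacle here; the estimate $\deg R \geq 2n - 2 \geq n$ for $n \geq 2$ is what makes the argument go through, and the tameness hypothesis over $\pfrak_\infty$ is exactly what is needed to convert \eqref{degR} into the clean bound $\deg R \leq n - 1$ in the contrary case. One should note that the full strength of tameness is used: if some prime over $\pfrak_\infty$ were wildly ramified, the term $l_\Pfrak$ could be arbitrarily large and no contradiction would follow.
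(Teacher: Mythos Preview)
Your proposal is correct and follows essentially the same route as the paper: both use Hurwitz's formula to bound $\deg R \geq 2(n-1) > n-1$, then use the fundamental identity together with the tameness hypothesis at $\pfrak_\infty$ to bound the infinite contribution by $n-1$, forcing a finite ramified prime. The only cosmetic difference is that you frame it as a proof by contradiction, whereas the paper compares the two bounds directly.
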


\begin{proof}

By \eqref{hurwitz} we have
\[
2g_F - 2 = -2n +\deg R \Rightarrow \deg R = 2g_F + 2(n-1) \geq 2(n-1) >n-1.
\]
(Recall that $n\geq 2$.)
Using the formula $\sum_{\Pfrak/\pfrak} e_\Pfrak f_\Pfrak = n$ with $\pfrak=\pfrak_\infty$ we get 
\[
\sum_{\Pfrak/\pfrak_{\infty}} (e_\Pfrak - 1) \leq n-1 < \deg R.
\] 
Thus there must be a prime $\Pfrak$ of $F$ not lying above $\pfrak_\infty$ that is ramified. 
\end{proof}

\begin{lemma}\label{lem:gen-by-ine}
Let $L$ be a field and let $F/L(x)$ be a finite Galois extension that is not wildly ramified at infinity.
Suppose $F/L(x)$ is regular, and that $\Ram(F/L(x)) =
\{c_1,c_2,\ldots,c_k\} \subseteq L$. For each $1 \leq i \leq k$
let $\frP_{i,1},\ldots,\frP_{i,g_i}$ be the primes of $F$ lying
over $(x-c_i)$. Denote the inertia group of $F/L(x)$ at $\frP_{i,j}$
by $I_{i,j}$. Then $G$ is generated by all the $I_{i,j}$.
\end{lemma}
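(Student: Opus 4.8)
The plan is to consider the subgroup $H \leq G$ generated by all the inertia groups $I_{i,j}$, to show $H$ is normal in $G$, and then to argue that the fixed field $F^H$ is an unramified extension of $L(x)$, whence $F^H = L(x)$ by the absence of nontrivial unramified extensions of the rational function field (over a field $L$ which is algebraically closed in $F$, so that $F^H$ is regular over $L$). This forces $H = G$.

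First I would recall that for a Galois extension the inertia groups at primes lying over a fixed prime of $L(x)$ are conjugate, so the normal closure of $I_{i,j}$ in $G$ is independent of $j$; consequently $H = \langle I_{i,j} \mid i,j\rangle$ is already a normal subgroup of $G$ (it is generated by a union of full conjugacy classes of subgroups). Next, set $E = F^H$, so $E/L(x)$ is Galois with group $G/H$. The key point is that $E/L(x)$ is unramified at every prime of $L(x)$: for a finite prime $(x-c_i)$ in $\Ram(F/L(x))$ the inertia group of $E/L(x)$ at a prime below $\frP_{i,j}$ is the image of $I_{i,j}$ in $G/H$, which is trivial since $I_{i,j}\subseteq H$; at all finite primes outside $\Ram(F/L(x))$ the extension $F/L(x)$ — hence $E/L(x)$ — is already unramified; and at $\pfrak_\infty$ the hypothesis that $F/L(x)$ is not wildly ramified at infinity, combined with Corollary~\ref{cor:non-void_ram} applied to the tower, shows that $E/L(x)$ cannot be ramified only at infinity. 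More precisely, if $E \neq L(x)$ then $E/L(x)$ has degree $\geq 2$, is regular (since $L$ is algebraically closed in $F \supseteq E$), and is tamely ramified at $\pfrak_\infty$; by Corollary~\ref{cor:non-void_ram} it would have to be ramified at some finite prime, contradicting what we just showed.

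Therefore $E = L(x)$, i.e.\ $H = G$, which is the assertion. I would write this up by first stating the conjugacy and normality facts, then computing the inertia groups of the quotient extension prime-by-prime, and finally invoking Corollary~\ref{cor:non-void_ram} to rule out ramification concentrated at infinity.

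The main obstacle is the behavior at infinity: Corollary~\ref{cor:non-void_ram} is stated for a prime $\frP$ of the top field lying over $\pfrak_\infty$ being tamely ramified, and I must make sure the tameness-at-infinity hypothesis on $F/L(x)$ transfers to the subextension $E/L(x)$ (ramification indices in $E$ divide those in $F$, so tameness is inherited) and that Corollary~\ref{cor:non-void_ram} is genuinely applicable to $E/L(x)$ — this requires $E$ regular over $L$ and $[E:L(x)]\geq 2$, both of which hold under the standing assumption $E \neq L(x)$. One should also double-check the degenerate case $k = 0$: if there are no ramified primes at all then $F/L(x)$ is everywhere unramified (again including infinity, by the tameness hypothesis and Corollary~\ref{cor:non-void_ram}), forcing $F = L(x)$ and $G = 1 = \langle\varnothing\rangle$, so the statement holds vacuously.
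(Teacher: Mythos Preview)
Your proposal is correct and follows essentially the same approach as the paper: define $H$ to be the subgroup generated by the $I_{i,j}$, observe that $E=F^{H}$ is unramified at all finite primes and at most tamely ramified at infinity, and then invoke Corollary~\ref{cor:non-void_ram} to conclude $E=L(x)$. The paper's version is somewhat terser and does not pause to note that $H$ is normal (normality is not needed: for any prime $\frP_{i,j}$ of $F$, the inertia group $I_{i,j}$ lies in $H=\Gal(F/E)$, so the ramification index of $\frP_{i,j}$ over $\frP_{i,j}\cap E$ equals $|I_{i,j}|$, which by multiplicativity forces $\frP_{i,j}\cap E$ to be unramified over $(x-c_i)$), but your added detail is harmless and correct.
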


\begin{proof}
Let $H\leq G$ be the subgroup generated by all $I_{j,j}$. Let $E = F^H$ the corresponding field. Then, by definition, $E$ is ramified at most at infinity, and this ramification, if exists, is tame. Thus by Corollary~\ref{cor:non-void_ram}, $E=L(t)$, i.e., $H=G$.
\end{proof}

Before we continue we recall some notation.
Fix a field $L$. Let $a\in \Lgal$, $f(X)$ its irreducible polynomial over $L$, and $\pfrak$ the prime of $L(x)$ that is defined by $f$. We say that $a$ is a \textbf{ramification point} of $F/L(x)$ if $\pfrak$ is ramified in $F$. We denote by $\Ram(F/L(x))\subseteq \Lgal$ the set of all ramification points of $F/L(x)$ in $\Lgal$. We emphasize that $\Ram(F/L(x))$ depends on $x$.

Let $\Pfrak/\pfrak$ be an extension of primes of $F/L(x)$ with ramification index $e$. If the characteristic of $L$ is positive, say $p>0$, we write $e=e' q$, where $\gcd(e',p)=1$ and $q$ is a power of $p$. We will call $e'$ the \textbf{tame ramification index} of $\Pfrak/\pfrak$ and $q$ the \textbf{wild ramification index}. If $L$ is of characteristic $0$, then the tame ramification index equals the ramification index, and the wild ramification index is $1$.

If $F/L(x)$ is Galois, then the tame and wild ramification indices are independent of the choice of the prime $\Pfrak$ that lies over $\pfrak$, so we abbreviate and say tame (resp., wild) ramification index of $\pfrak$ in $F$.

\begin{lemma}\label{lem:con_F}
Let $K$ be an infinite field, let $F_0/K(x_0)$ be a finite Galois extension, and let $L_0 =\Kgal\cap F_0$.  Assume that $\Ram (F_0/L_0(x_0)) \subseteq K_s$. Then there exist $x \in K(x_0)$ and a finite Galois extension
$F/K(x)$ such that, for $L = K_s\cap F$, the following conditions hold:
\begin{enumerate}
\item $K(x) = K(x_0)$.
\item The prime $\pfrak_\infty$ of $L(x)$ is tamely ramified in $F$.
\item $F_0\subseteq F$, \item $\Ram(F/L(x))\subseteq L$.
\item Let $c\in \Ram(F/L(x))$, $\Pfrak_c$ a prime of $F$ lying above $\pfrak_c = (x-c)$, and $e'$ (resp., $q$) be the tame (resp., wild) ramification index of $\Pfrak_c/\pfrak_c$.
Then
\renewcommand{\theenumii}{\theenumi \arabic{enumii}}
\begin{enumerate}
\item $(x-c)^{1/e'}\in F$,
\item the primitive $e'$-th root of unity $\zeta_{e'}$ is contained in $L$, and
\item the residue field of $F$ at $\Pfrak_c$ is a purely inseparable extension of $L$.
\end{enumerate}
\end{enumerate}
\end{lemma}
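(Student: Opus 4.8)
The plan is to realize the desired field $F$ as a composite of $F_0$ with an auxiliary radical (Kummer) extension that accounts for the tame ramification, after first moving the place at infinity out of the ramification locus by a suitable change of coordinate. First I would choose a new coordinate $x = \frac{1}{x_0 - a}$ (or more generally a fractional-linear change $x_0 \mapsto x$ fixing $K(x_0) = K(x)$) with $a \in K$ chosen so that $\pfrak_\infty$ with respect to $x$ is unramified in $F_0$; this is possible because $K$ is infinite and only finitely many points of $\bbP^1$ ramify in $F_0$, and it immediately arranges (1), (3), and the tameness at infinity demanded in (2) (as long as we also avoid the finitely many bad coordinates in the wildly ramified case, which we can since the ramification points lie in $K_s$ and there are only finitely many of them). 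After this change I rename everything and assume $\pfrak_\infty$ is unramified in $F_0$ and that $\Ram(F_0/L_0(x)) \subseteq K_s$ consists of finitely many points $c_1, \dots, c_k$, all finite with respect to $x$.

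Next I would enlarge the constant field: let $L_1/K$ be a finite separable extension (inside $K_s$) large enough to contain $L_0$, all the $c_i$ (which lie in $K_s$ by hypothesis), and all the roots of unity $\zeta_{e_i'}$ where $e_i'$ is the tame ramification index of $\pfrak_{c_i}$ in $F_0$. This handles clause (5b). Then I would adjoin the radicals: set $F_1 = F_0 \cdot L_1 \cdot L_1\bigl((x-c_1)^{1/e_1'}, \dots, (x-c_k)^{1/e_k'}\bigr)$, viewed inside a fixed algebraic closure of $L_1(x)$. I must check that $F_1/L_1(x)$ is Galois — this follows since $F_0 L_1/L_1(x)$ is Galois and the Kummer extension $L_1(x)\bigl((x-c_i)^{1/e_i'}\bigr)/L_1(x)$ is Galois (all needed roots of unity are in $L_1$) and normal, and compositum of normal extensions is normal; separability is automatic since each $e_i'$ is prime to the characteristic. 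I would then set $F = F_1$ (renaming $L_1$ as $L$, after possibly re-adjusting so $L = K_s \cap F$ — one checks $K_s \cap F_1 = L_1$ by a regularity argument, since $F_0$ is regular over $L_0$ and the Kummer part contributes no new constants). By construction $F_0 \subseteq F$ (clause (3)), $\pfrak_\infty$ is tamely (in fact un-) ramified because neither $F_0$ nor any $(x-c_i)^{1/e_i'}$ ramifies at $\infty$ (clause (2)), $\Ram(F/L(x)) \subseteq \{c_1, \dots, c_k\} \subseteq L$ since adjoining the radical $(x-c_i)^{1/e_i'}$ only ramifies over $c_i$ and $\infty$, and the latter is killed — so clause (4) holds — and clause (5a) holds by construction.

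For the remaining clauses (5b) and (5c): clause (5b) is immediate since $\zeta_{e_i'} \in L$ by the choice of $L$. For clause (5c) — the residue field at $\Pfrak_c$ is purely inseparable over $L$ — the point is that after adjoining $(x-c)^{1/e'}$ the tame part of the inertia at $c$ becomes trivial: the extension $L(x)\bigl((x-c)^{1/e'}\bigr)/L(x)$ is totally ramified of degree $e'$ at $c$ and its residue extension is trivial, so passing to $F$ the inertia group at $\Pfrak_c$ becomes a $p$-group (the wild part), whence the residue extension at $\Pfrak_c$, which is the residue extension of the decomposition-over-inertia, together with the fact that any residue extension of a purely wild inertia contribution is purely inseparable — more precisely, the residue field of $F$ at $\Pfrak_c$ over $L$ is the residue field of the inertia-fixed field, and since the tame inertia is now trivial, this residue extension is purely inseparable over $L$ (the residue extension attached to a $p$-group inertia is trivial, and the decomposition group acts on a residue field that is an extension of $L$ of degree equal to the residue degree, which in the Galois case over the rational function field with the prime lying over a degree-one place $c \in L$ is purely inseparable — here is where one uses that $c \in L$ so that $L$ is the residue field of $L(x)$ at $\pfrak_c$). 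I expect this last verification — tracking the residue field at $\Pfrak_c$ and showing it is purely inseparable over $L$ after the radical adjunction — to be the main obstacle, since it requires care with the structure of inertia in the wildly ramified case (one uses that tame inertia is cyclic of order $e'$ and becomes trivial in the Kummer extension, so in $F$ the inertia at $\Pfrak_c$ is a $p$-group with trivial residue action). All other steps are routine applications of Hurwitz's formula (via Corollary~\ref{cor:non-void_ram} and Lemma~\ref{lem:gen-by-ine}, which guarantee that once infinity is unramified there really is finite ramification to work with and that the construction is consistent), Kummer theory, and the fact that $K$ is infinite so finitely many coordinates can be avoided.
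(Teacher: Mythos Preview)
Your overall strategy matches the paper's: apply a M\"obius transformation over $K$ to push infinity off the branch locus, enlarge the constant field to a finite Galois $L/K$ containing $L_0$, the branch points $c_i$, and the relevant roots of unity, then adjoin the radicals $(x-c_i)^{1/e_i'}$. Two corrections are needed.

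First, a small slip on condition (b): each Kummer generator $(x-c_i)^{1/e_i'}$ \emph{does} ramify at $\infty$, with index $e_i'$, since $x-c_i$ has a simple pole there. So $\pfrak_\infty$ is not unramified in your $F$, and your phrase ``the latter is killed'' is incorrect. It is, however, \emph{tamely} ramified, because its ramification index divides $\prod_i e_i'$, which is prime to the characteristic; this is exactly what the paper observes, and it is all that (b) demands.

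Second, and more seriously, your argument for (e3) is a non-sequitur. The residue field of $F$ at $\Pfrak_c$ is governed by the quotient $D/I$, not by $I$ itself; whether the inertia group is tame, wild, or a $p$-group has no bearing whatsoever on the residue extension. Concretely: if some prime of $F_0 L$ above $c_i$ has residue degree $f>1$ over $L$, then adjoining $(x-c_i)^{1/e_i'}$ --- a totally ramified step with trivial residue contribution --- leaves the residue field of $F$ at $\Pfrak_{c_i}$ still a degree-$f$ extension of $L$, typically separable and certainly not purely inseparable. The paper does not attempt a structural argument here at all; it simply enlarges $L$ once more to absorb (the separable parts of) these finitely many residue fields and redoes the construction with the bigger $L$. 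You should do the same. (Along the way, take your $L_1/K$ to be Galois rather than merely separable, so that $F/K(x)$ --- not just $F/L(x)$ --- is Galois as the lemma requires.)
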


\begin{proof}
Since $K$ is infinite, there is a M\"obieus transformation that sends $x_0$ to $x=\frac{u + v x_0}{w + z x_0}$, for some $\left(\begin{matrix}u & v \\w & z\end{matrix}\right) \in PSL_2(K)$ under which $\infty$ is mapped to an unramified point. Clearly $K(x) = K(x_0)$ and $\Ram (F_0/L_0(x)) \subseteq K_s$.

Let $n = |\Ram(F_0/L_0(x))|$, write $\Ram(F_0/L_0(x)) =\{c_1,\ldots,
c_n\}$, and for each $i$,  let $e_i'$, (resp., $q_i$) be the tame (resp., wild) ramification index of $(x-c_i)$ in
$F_0$. Then $e_i = e_i' q$, $i=1,\ldots, n$ are the corresponding ramification indices.  Let $L$ be a finite Galois extension of $K$ that contains $L_0$, $c_1, \ldots, c_n$. 

Write $F_1=F_0
L$. Then $\Ram(F_1/L(x)) = \{c_1,\ldots, c_n\}$, and each $c_i$ has the same ramification indices  $e_1,\ldots, e_n$ (since
$L(x)/L_0(x)$ is unramified).
Let $F= F_1((x-c_i)^{1/e'_i}\mid i=1,\ldots, n)$. By
\cite[Example~2.3.8]{FriedJarden2005} it follows that $F/F_1$ is
unramified over the primes of $F_1$ that lie above $(x-c_i)$,
$i=1,\ldots, n$. By the multiplicity of ramification indices, we
get that each $(x-c_i)$ has ramification $e_i$ in $F$.

On the other hand, let $\pfrak \neq (x-c_i)$, $i=1,\ldots,n$ be a finite
prime of $L(x)$. Then  $\pfrak$ is unramified both in $F_1/L(x)$
and in $K((x-c_j)^{1/e'_i}\mid i=1,\ldots, n)$. Thus it is
unramified in their compositum $F$.
Replace $L$ with a larger extension, if necessary, to assume that the residue field of $F$ w.r.t.\ $\Pfrak_{c_i}$ is a purely inseparable extension of $L$.

Finally note that  the ramification of $\pfrak_\infty$ in $F$ divides $e_1' \cdots e_n'$, and hence is co-prime to the characteristic of $K$, if not $0$.
\end{proof}

\begin{proposition}
Let $K$ be an infinite field and
\[
(\mu_0 \colon \gal(K) \to \gal(L_0/K(x_0)), \alpha_0 \colon \gal(F_0/K(x_0) )\to \gal(L_0/K(x_0))
\]
 a non-trivial rational finite embedding problem. Then there
exist $x\in K(x_0)$ and a non-trivial rational finite embedding problem
\[
(\mu \colon \gal(K) \to \gal(L/K(x)), \alpha \colon \gal(F/K(x)) \to \gal(L/K(x))
\]
satisfying Properties (a)-(e) of Lemma~\ref{lem:con_F} such that an independent set
$\{\theta_i \mid i \in I\}$ of proper geometric solutions of
$(\mu,\alpha)$ induces an independent set $\{\res_{F,F_0}
\theta_i\mid i\in I\}$ of proper geometric solutions of
$(\mu_0,\alpha_0)$.
\end{proposition}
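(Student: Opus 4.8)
The plan is to read off $x$ and $F$ directly from Lemma~\ref{lem:con_F} applied to $F_0/K(x_0)$, and then to verify that restriction along the inclusion $F_0\subseteq F$ carries proper geometric solutions to proper geometric solutions and preserves independence.

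First I would invoke Lemma~\ref{lem:con_F} for the extension $F_0/K(x_0)$ (whose ramification points over $L_0$ are separable over $K$) to obtain $x\in K(x_0)$ with $K(x)=K(x_0)$ and a finite Galois extension $F/K(x)$ with $F_0\subseteq F$ and $L:=K_s\cap F$ satisfying properties (a)--(e). Taking $\mu\colon\gal(K)\to\gal(L/K)$ and $\alpha\colon\gal(F/K(x))\to\gal(L/K)$ to be the restriction maps (and identifying $\gal(L/K)\cong\gal(L(x)/K(x))$) gives a rational finite embedding problem for $K$. It is non-trivial: if $F=L(x)$, then $F_0$ would be an intermediate field of $L(x)/K(x)$, hence $F_0=(L\cap F_0)(x)=L_0(x)=L_0(x_0)$ (using $L\cap F_0=K_s\cap F_0=L_0$), contradicting the non-triviality of $(\mu_0,\alpha_0)$.

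Next, since $F_0\subseteq F$ with both Galois over $K(x)=K(x_0)$, the map $\res_{F,F_0}\colon\gal(F/K(x))\to\gal(F_0/K(x_0))$ is a well-defined epimorphism, and since all structure maps are restriction maps one has $\alpha_0\circ\res_{F,F_0}=\res_{L,L_0}\circ\alpha$ and $\mu_0=\res_{L,L_0}\circ\mu$; hence $\res_{F,F_0}$ carries weak solutions of $(\mu,\alpha)$ to weak solutions of $(\mu_0,\alpha_0)$, and proper ones to proper ones. For geometricity I would argue: if $\theta_i=\Phi_i^*$ for an $L$-place $\Phi_i$ of $F$ lying over a $K$-place of $K(x)$ with residue field $K$ and unramified over $K(x)$, then $\Phi_i|_{F_0}$ is an $L_0$-place of $F_0$ (it fixes $L\supseteq L_0$), lies over the same $K$-place of $K(x_0)$ with residue field $K$, and is unramified over $K(x_0)$ because ramification indices multiply along $K(x)\subseteq F_0\subseteq F$; since $\Phi\mapsto\Phi^*$ commutes with restriction of places, $\res_{F,F_0}\theta_i=(\Phi_i|_{F_0})^*$ is geometric, as in Lemma~\ref{lem:basic-rat-eps}.

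It remains to preserve independence, the only step with real content. The key observation is that $\ker(\res_{F,F_0})=\gal(F/F_0)$ surjects, under restriction to $L$, onto $\gal(L/L_0)=\ker(\res_{L,L_0})$; this uses $L\cap F_0=L_0$ and the resulting linear disjointness of $L$ and $F_0$ over $L_0$. Given this, for every finite $J\subseteq I$ one applies $\res_{F,F_0}$ to the image of $\prod_{i\in J}\theta_i$, which by independence of $\{\theta_i\}$ is the fiber product of copies of $\gal(F/K(x))$ over $\gal(L/K)$ (\cite[Lemma~2.5]{Bary-SorokerHaranHarbater}), and checks that the image is the fiber product of copies of $\gal(F_0/K(x_0))$ over $\gal(L_0/K)$: any tuple in the latter lifts coordinatewise through $\res_{F,F_0}$, and the lifts can be adjusted by elements of $\gal(F/F_0)$ so that their $\gal(L/K)$-components all agree, placing the tuple in the fiber product over $\gal(L/K)$. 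By \cite[Lemmas~2.4 and~2.5]{Bary-SorokerHaranHarbater} this yields that $\{\res_{F,F_0}\theta_i\mid i\in I\}$ is an independent set of proper geometric solutions of $(\mu_0,\alpha_0)$, in the spirit of the proof of Lemma~\ref{lem:basic-rat-eps_2}. The main obstacle is organizational: keeping straight which of $L$, $L_0$ is the base of linear disjointness at each stage and confirming the fiber-product image computation; everything else is immediate from Lemma~\ref{lem:con_F} and from the compatibility of the decomposition-group construction with restriction of places.
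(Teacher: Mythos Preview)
Your proposal is correct and follows essentially the same approach as the paper: invoke Lemma~\ref{lem:con_F} to produce $F$, then show that restriction $\res_{F,F_0}$ carries independent geometric solutions to independent geometric solutions. The only organizational difference is that the paper factors $\res_{F,F_0}$ through the intermediate field $F_0L$, writing it as $\gal(F/K(x))\xrightarrow{\gamma}\gal(F_0L/K(x))\xrightarrow{\pi}\gal(F_0/K(x))$, and then applies the two parts of Lemma~\ref{lem:basic-rat-eps_2} separately (the first part to $\gamma$, the fiber-product part to $\pi$, using $\gal(F_0L/K(x))\cong\gal(F_0/K(x))\times_{\gal(L_0/K)}\gal(L/K)$); you instead collapse these two steps into a single direct fiber-product computation, using the surjection $\gal(F/F_0)\twoheadrightarrow\gal(L/L_0)$ (equivalent to $L\cap F_0=L_0$) to adjust lifts. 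Your route is self-contained but re-derives what the paper gets by quoting Lemma~\ref{lem:basic-rat-eps_2}; the paper's factorization makes the dependence on that lemma transparent.
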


\begin{proof}
Let $F$ be the field given by Lemma~\ref{lem:con_F}. Consider the
commutative diagram
\[
\xymatrix{%
        &&{\gal(K)}\ar[d]^{\mu}\ar@/^30pt/[dd]^{\mu_0}\\
\gal(F/K(x))\ar[r]_-{\gamma}\ar@/^10pt/@{.>}[rr]^{\alpha}
    &\gal(F_0L/K(x))\ar[r]_-{\beta}\ar[d]^\pi
        &\gal(L/K)\ar[d]^{\mu'}\\
    &\gal(F_0/K(x))\ar[r]^-{\alpha_0}
        &\gal(L_0/K)
}%
\]
in which all the maps are the restriction maps. We are given an
independent set $\{\theta_i\mid i\in I\}$  of geometric proper
solutions of $(\mu, \alpha)$.

By Lemma~\ref{lem:basic-rat-eps_2}
$\{\gamma\theta_i\mid i\in I\}$ is an independent set of geometric
proper solutions of $(\mu,\beta)$. Then Lemma~\ref{lem:basic-rat-eps_2}
implies that $\{\pi\gamma\theta_i\mid i\in I\}$ is an independent
set of geometric proper solutions of $(\mu_0,\alpha_0)$.
\end{proof}

The following is an immediate conclusion.
\begin{corollary}\label{cor:suf}
For a field $K$ to be fully Hilbertian it
suffices that every non-trivial rational embedding problem
\[
(\mu \colon \gal(K)\to \gal(L/K), \alpha\colon \gal(F/K(x))\to \gal(L/K))
\]
satisfying Properties (a)-(e) of
Lemma~\ref{lem:con_F} has $|K|$ independent geometric solutions.
\end{corollary}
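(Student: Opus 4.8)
The plan is to read the Corollary off from the preceding Proposition, once one invokes the equivalence of the various notions of full Hilbertianity proved in Theorem~\ref{thm:equiv-def}. We may assume $K$ is infinite. By Theorem~\ref{thm:equiv-def} it suffices to show that $K$ is Galois fully Hilbertian, i.e.\ that every non-trivial rational finite embedding problem for $K$ has $|K|$ independent geometric solutions. By the reduction of rational embedding problems to transcendence degree one (via the Bertini--Noether lemma and the Matsusaka--Zariski theorem, cf.\ \cite[Proposition~13.2.1]{FriedJarden2005} and \cite[Lemma~2.4]{Bary-Soroker2008IMRN}), such an embedding problem may be taken in the form
\[
(\mu_0\colon \gal(K)\to\gal(L_0/K),\ \alpha_0\colon \gal(F_0/K(x_0))\to\gal(L_0/K)),
\]
with $x_0$ a single variable and $L_0=\Kgal\cap F_0$.

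Given such an embedding problem, I would feed it into the preceding Proposition: it yields an element $x\in K(x_0)$ and a non-trivial rational finite embedding problem
\[
(\mu\colon\gal(K)\to\gal(L/K),\ \alpha\colon\gal(F/K(x))\to\gal(L/K))
\]
satisfying Properties (a)--(e) of Lemma~\ref{lem:con_F}, together with the extra feature that restriction along $F/F_0$ carries independent families of proper geometric solutions of $(\mu,\alpha)$ to independent families of proper geometric solutions of $(\mu_0,\alpha_0)$. The hypothesis of the Corollary applies to $(\mu,\alpha)$ and produces an independent family $\{\theta_i\mid i\in I\}$ of geometric solutions with $|I|=|K|$; the Proposition then turns this into the independent family $\{\res_{F,F_0}\theta_i\mid i\in I\}$ of geometric solutions of $(\mu_0,\alpha_0)$, again of cardinality $|K|$. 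Hence $(\mu_0,\alpha_0)$ has $|K|$ independent geometric solutions, so $K$ is Galois fully Hilbertian, and therefore fully Hilbertian.

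I do not expect any genuine obstacle in this step: it is purely formal once the preceding Proposition is in hand. All the substantive work sits upstream --- in Lemma~\ref{lem:con_F} (arranging that $\pfrak_\infty$ is tamely ramified, that the branch locus lies in $L$, that the radicals $(x-c)^{1/e'}$ and the roots of unity $\zeta_{e'}$ lie in $F$ and $L$, and that the residue fields at branch points are purely inseparable over $L$), in Lemma~\ref{lem:basic-rat-eps_2} (stability of independence of geometric solutions under the quotient and fiber-product maps used in the Proposition), and in Theorem~\ref{thm:equiv-def} itself. The only points to double-check when writing up are the reduction to transcendence degree one and the fact that the embedding problem produced by the Proposition remains non-trivial, both of which are already recorded in the excerpt.
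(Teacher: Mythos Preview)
Your proposal is correct and follows the same approach as the paper, which records the Corollary as ``an immediate conclusion'' of the preceding Proposition. Your added remarks (invoking Theorem~\ref{thm:equiv-def} to pass from fully Hilbertian to Galois fully Hilbertian, and the reduction to transcendence degree one) merely spell out what the paper leaves implicit; no substantive difference.
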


\subsection{Wild ramification}
This part deals with wild ramifications. The reader interested only with fields of characteristic zero can skip this part.

For the rest of this section fix a field $E$ with positive characteristic $p>0$ which is equipped with a discrete valuation $v$.

\begin{lemma}[Artin-Schreier theory]\label{Artin-Schreier}
Let $F/E$ be a Galois extension of degree $p$. There exists $a_0\in E$ such that for every $a\in a_0 + E^p - E$ and every $x$ satisfying $x^p-x=a$ we have
$F = E(x)$.
\end{lemma}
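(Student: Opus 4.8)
The plan is to recall the Artin–Schreier description of degree-$p$ Galois extensions of $E$ and then show that a generic element in the relevant additive coset produces the extension $F$. Since $F/E$ is Galois of degree $p = \mathrm{char}\, E$, Artin–Schreier theory says $F = E(x_1)$ for some $x_1$ with $x_1^p - x_1 = a_1 \in E$, and moreover two such elements $a_1, a_1' \in E$ define the same extension if and only if $a_1' = c^p a_1 + (b^p - b)$ for some $c \in \bbF_p^\times$ and $b \in E$; in particular, replacing $a_1$ by $\lambda a_1$ for $\lambda \in \bbF_p^\times$ only rescales $x_1$, and adding an element of $\wp(E) := \{b^p - b \mid b\in E\}$ to $a_1$ does not change $F$. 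So the natural choice is to take $a_0 = a_1$, the fixed Artin–Schreier generator of $F/E$. Then for any $a \in a_0 + E^p - E$ I need to show $F = E(x)$ whenever $x^p - x = a$.

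The key step is the following: if $a = a_0 + t$ with $t \in E^p - E$, I claim $E(x) = F$. First, $E(x)/E$ is either trivial or Galois of degree $p$, the trivial case occurring exactly when $a \in \wp(E)$. Since $a_0 \notin \wp(E)$ (as it generates a nontrivial extension) I must rule out $a_0 + t \in \wp(E)$; but if $a_0 + t = b^p - b$ then $a_0 = (b^p - b) - t$, and the point is that $t \in E^p - E$ means $t = s^p - s$ is… wait, no: $E^p - E$ here should be read as $\{s^p - s \mid s \in E\} = \wp(E)$ — actually the notation ``$a_0 + E^p - E$'' in the statement denotes the coset $a_0 + \wp(E)$. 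Re-reading: ``$a\in a_0 + E^p - E$'' means $a - a_0 \in \wp(E)$, i.e. $a$ lies in the same Artin–Schreier class as $a_0$. Then $a \notin \wp(E)$ precisely because $a_0 \notin \wp(E)$, so $E(x)/E$ is a genuine degree-$p$ Galois extension. Finally, since $a$ and $a_0$ differ by an element of $\wp(E)$, the standard Artin–Schreier correspondence (e.g. \cite[Example~2.3.8]{FriedJarden2005} or a direct computation: if $a - a_0 = c^p - c$ then $x - c$ satisfies $(x-c)^p - (x-c) = a - (c^p - c) = a_0$, up to the Frobenius-twisting bookkeeping) gives $E(x) = E(x - c') = F$ for the appropriate $c'$, hence $F = E(x)$.

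The main obstacle is purely bookkeeping: getting the Artin–Schreier coset relation exactly right, since the equivalence ``$a \sim a'$ define the same extension'' is $a' \in \bbF_p^\times a + \wp(E)$, not merely $a' \in a + \wp(E)$ — but here the $\bbF_p^\times$-scaling is harmless because scaling $x$ by $\lambda^{-1}$ accommodates it, or more simply the statement as written only asserts membership in $a_0 + \wp(E)$, which is the cleanest sub-case. Thus once one fixes $a_0$ to be any Artin–Schreier generator of $F/E$ (such an $a_0 \in E$ exists by the hypothesis that $F/E$ is Galois of degree $p$), the conclusion is immediate from the correspondence, and I expect the author's proof to be only a few lines citing this standard theory.
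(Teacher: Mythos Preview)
Your proposal is correct: once you read $E^p-E$ as $\wp(E)=\{b^p-b\mid b\in E\}$, pick any Artin--Schreier generator $a_0$ for $F/E$, and observe that for $a=a_0+(c^p-c)$ a root $x$ of $X^p-X-a$ satisfies $(x-c)^p-(x-c)=a_0$, the conclusion $E(x)=F$ follows immediately. Your detour through the $\bbF_p^\times$-scaling is unnecessary here, since the statement only concerns the coset $a_0+\wp(E)$, but you noticed this yourself.

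The paper proceeds differently and somewhat more conceptually: rather than citing Artin--Schreier theory as a black box, it derives the correspondence $E/\wp(E)\cong\Hom(\gal(E),\bbZ/p\bbZ)$ from the long exact sequence in Galois cohomology attached to $0\to\bbF_p\to E_s\xrightarrow{\wp}E_s\to 0$, using the additive form of Hilbert~90 to kill $H^1(\gal(E),E_s)$. The surjection $\gal(E)\to\gal(F/E)\cong\bbZ/p\bbZ$ then corresponds to a well-defined class in $E/\wp(E)$, and any representative $a_0$ of that class works. Your argument is shorter and more hands-on, while the paper's version has the virtue of re-proving the Artin--Schreier correspondence itself rather than invoking it; in either case the content is the same standard fact, and your expectation that the proof is only a few lines is accurate.
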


\begin{proof}
Let $\wp(x) = x^p - x$. Then the short exact sequence
\[
\xymatrix@1{
0\ar[r]& \bbF_p \ar[r] & E_s \ar[r]^-{\wp} & E_s\ar[r] &0
}
\]
of additive $\gal(E)$-modules gives rise to the following exact sequence of cohomology
\[
\xymatrix@1{
E \ar[r]^-{\wp} & E\ar[r] & H^{1} (\bbF_q) \ar[r] & H^{1} (E_s).
}
\]
By Hilbert's 90th, $H^1(E_s) = 0$. Since $\gal(E)$ acts trivially on $\bbF_p \cong \bbZ/p\bbZ$ we have $H^1(\bbF_q) = \Hom(\gal(E) , \bbZ/p \bbZ)$. Therefore $\Hom (\gal(E), \bbZ/p\bbZ) \cong E/\wp(E)$. Applying this to the restriction map $\gal(E) \to \gal(F/E)$ proves the assertion.
\end{proof}

We give a criterion for $v$ to be totally ramified in a Galois $p$ extension.

\begin{proposition}
\label{prop:crit-tot-ram} 
Let $F/E$ be a Galois extension of degree $p$. Then $v$ is totally ramified in $F$ if and only if there exists $a\in E$ and $x\in F$ such that $F=E(x)$, $x^p-x=a$, and $p\nmid v(a)<0$. 
\end{proposition}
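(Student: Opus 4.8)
The plan is to prove both directions by combining Artin–Schreier theory (Lemma~\ref{Artin-Schreier}) with the classical formula for the different (hence the ramification behaviour) of an Artin–Schreier extension. First I would recall that, by Lemma~\ref{Artin-Schreier}, there is some $a_0 \in E$ such that for every $a \in a_0 + \wp(E)$ (with $a \notin \wp(E)$) and every root $x$ of $x^p - x = a$, we have $F = E(x)$. So in either direction the task is to choose a suitable representative $a$ of the coset $a_0 + \wp(E)$, or rather to control $v(a)$ under the substitution $a \mapsto a + \wp(c)$, $c \in E$.

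For the ``if'' direction: suppose we have $a \in E$ and $x \in F$ with $F = E(x)$, $x^p - x = a$, and $p \nmid v(a) < 0$. I would extend $v$ to a valuation $w$ on $F$ and show $w$ is totally ramified, i.e.\ that the ramification index $e := e(w/v)$ equals $p$. The point is that $p\, w(x) = w(a)$ (since $w(x^p) = p\,w(x) < w(x) = -|w(x)|$, so $w(x^p - x) = w(x^p)$, using $v(a) < 0$ hence $w(x) < 0$). Thus $p \mid w(a)$ in the value group of $w$. But $w$ restricted to $E$ is $e \cdot v$ (up to normalization), so $w(a) = e \cdot v(a)$, giving $p \mid e\, v(a)$; since $p \nmid v(a)$ and $p$ is prime, $p \mid e$. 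As $e \mid [F:E] = p$, we get $e = p$, i.e.\ $v$ is totally ramified. (One should note $[F:E] = p$ forces either total ramification or unramified/trivial residue extension behaviour, but the divisibility argument directly pins it down.)

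For the ``only if'' direction: suppose $v$ is totally ramified in $F$. Starting from any $a_0$ as in Lemma~\ref{Artin-Schreier}, I would adjust it by an element of $\wp(E)$ to arrange $p \nmid v(a) < 0$. This is the standard normalization of Artin–Schreier generators: if $v(a_0) \geq 0$ or if $p \mid v(a_0) < 0$, one replaces $a_0$ by $a_0 - \wp(c) = a_0 - c^p + c$ for an appropriate $c \in E$ to strictly decrease — or at least change the residue of — the valuation, pushing $v(a)$ down to a negative integer prime to $p$; this process terminates because if no such replacement is possible and $v(a) \geq 0$, then the extension $E(x)/E$ would be unramified at $v$ (one can reduce mod the maximal ideal and the residue extension is separable of degree dividing $p$, hence $v$ would not be totally ramified unless the residue extension is already degree $p$, contradicting total ramification with trivial residue field growth) — so total ramification forces the existence of a representative with $p \nmid v(a) < 0$. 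With such $a$, any root $x$ of $x^p - x = a$ satisfies $F = E(x)$ by Lemma~\ref{Artin-Schreier}, completing the proof.

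The main obstacle I expect is the ``only if'' direction's normalization step: making rigorous that total ramification guarantees one can choose the Artin–Schreier representative with valuation negative and prime to $p$. The clean way is to invoke the known classification — for a degree-$p$ Artin–Schreier extension $E(\wp^{-1}(a))/E$ with $v$ discrete, the extension is unramified at $v$ iff the coset $a + \wp(E)$ has a representative with $v(a) \geq 0$, and is (totally, wildly) ramified iff every representative with minimal valuation has $v(a) < 0$ with $p \nmid v(a)$; this is exactly the statement that the ``depth'' $\min\{ -v(a) : a \in a_0 + \wp(E),\ v(a)<0 \}$, when positive, is automatically prime to $p$ (else one could lower it by subtracting $\wp(c)$ with $v(c) = v(a)/p$). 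I would spell this descent out explicitly since it is the crux, and cite the differential formula (e.g.\ via \cite[Corollary~IV.2.4]{Hartshorne1977} or a standard reference) only if a cleaner bookkeeping is desired.
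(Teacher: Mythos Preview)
Your proposal is correct and follows essentially the same route as the paper. The ``if'' direction is identical: from $p\,w(x)=w(x^p)=w(a)=e(w/v)\,v(a)$ and $\gcd(v(a),p)=1$ one gets $p\mid e(w/v)$, hence $e(w/v)=p$. The ``only if'' direction in the paper is exactly the Artin--Schreier descent you describe: pick a representative $a$ of maximal valuation in the coset $a_0+\wp(E)$, and show that if $p\mid v(a)$ one can strictly increase $v(a)$ by subtracting $\wp(c)$ for a suitable $c$ with $v(c)=v(a)/p$.

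One point you should sharpen, since you flag the descent as ``the crux'': merely taking $c$ with $v(c)=v(a)/p$ is not enough to force $v(a-c^p+c)>v(a)$; you need the leading coefficient of $c^p$ to match that of $a$ in the residue field, i.e.\ you need the residue $\bar\alpha$ of $a\pi^{-v(a)}$ to be a $p$-th power in the residue field of $E$. This is exactly where total ramification enters. The paper makes it explicit: writing $x=\pi^{v(a)/p}\xi$ with $w(\xi)=0$, one checks $\bar\xi^{\,p}=\bar\alpha$; since $v$ is totally ramified, the residue fields of $E$ and $F$ coincide, so $\bar\xi$ already lies in the residue field of $E$ and can be lifted to some $b\in E$, giving $c=\pi^{v(a)/p}b$. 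Without total ramification (and with an imperfect residue field) such a $c$ need not exist. Your sketch invokes total ramification only in the $v(a)\geq 0$ branch; make sure you also invoke it here. Also, minor wording: you want to \emph{increase} $v(a)$ (equivalently, lower the depth $-v(a)$), not ``strictly decrease'' it.
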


\begin{proof}
First assume there exist $x\in F$ and $a\in E$ as in the proposition. Let $w$ be a normalized discrete valuation of $F$ lying above $v$. Then $w(x) < 0$, since otherwise $v(a) \geq 0$. We thus get that $w(x^p) < w(x)$, hence
\[
e(w/v) v(a) = w(a) =  w(x^p -x) = w(x^p) = p w(x).
\]
Since $\gcd(v(a),p)=1$, we get that $p$ divides $e(w/v)$, hence $p=e(w/v)$. Thus $v$ is totally ramified in each $F$.

Conversely, assume that $v$ is totally ramified in $F$ and let $w$ be its normalized extension. Fix $1 \leq i \leq m$. We show how to choose suitable $a,x$.  Choose a uniformizer $\pi \in E$ for $v$, i.e., $v(\pi)=1$. Then $w(\pi)=p$. Let $K$ be the residue field of $E$. Since $w/v$ is totally ramified, $K$ is the residue field of $F$ as well. Let $\phi$ be the corresponding place of $F$.

Apply Lemma~\ref{Artin-Schreier} to get an element $a_0 \in E$ such that for every $a \in a_0 + E^p -
E$ and $x \in F$ such that $x^p - x = a$ we have $F = E(x)$. Denote $A = a_0 + E^p -
E$. By \cite[Example~2.3.9]{FriedJarden2005} $v(A)$ is a set of negative integer numbers.
Let $l\in \bbN$ be minimal such that $-lp\in v(A)$. Choose some $a\in A$ satisfying $v(a) = -lp$. It suffices to show that $-lp\neq \max v(A)$.

Let $x\in F$ be a root of $X^p-X=a$. Then $F = E(x)$. We have $w(x^p - x) = w(a) =pv(a) < 0$. Hence $w(x)<0$, so $w(x^p)<w(x)$ which implies $pw(x) = w(x^p) = w(x^p-x) = p v(a)$. Therefore $w(x) = v(a) = -lp$.

Write $a = \pi^{-lp} \alpha$ and $x = \pi^{-l} \xi$. Note that $v(\alpha) = v(a) + lp=0$ and $w(\xi) = w(x) + l w(\pi) = -lp+lp=0$. Thus $\alpha,\xi$ are $w$-invertible, or equivalently, $\bar \alpha, \bar\xi \neq 0,\infty$, where the notation $\bar \cdot$ indicates the image after applying $\phi$. We apply $\phi$ to the equation
\[
\xi^p -   x \pi^{lp} = x^p \pi^{lp} -  x \pi^{lp}  = a \pi^{lp} = \alpha
\]
to get the equality $\bar \xi^p = \bar \alpha$.
Since $\Egag = K$, there exists $b\in E$ with $\bgag = \bar \xi$. In particular $v(b) = 0$. Also, as $\bgag^p - \bar \alpha =0$, we have $v(b^p-\alpha)>0$.
Let $a' = a - \pi^{-lp} b^p + \pi^{-l} b \in A$. Then $d:=v(a-\pi^{-lp} b^p) = v(-\pi^{-lp}(\alpha-b^p)) =-lp + v(\alpha - b^p)  > -lp$. Thus $v(a') \geq \min\{d, -m\} > -lp = v(a)$. This implies $v(a)\neq \max v(A)$, as needed.
\end{proof}

Now we are  ready to state and prove the main result of this part which sets a necessary condition for a totally ramified Galois extension of order a power of $p$ to remain totally ramified when moving to the residue extension w.r.t.\ some other place.

\begin{corollary}
\label{cor:wildly-ram}
Let $N/E$ be a Galois extension of degree a power of $p$.
Let $v$ be a discrete valuation of $E$ that is totally ramified in $N$.
Let $\phi$ be a place of $N$, let $\bar N,\bar E$ be the respective residue fields of $N,E$ under $\phi$, and assume the characteristic of $\bar E$ is $p$ and $\bar N/\bar E$ is a Galois extension of degree $[\bar N:\bar E] = [N:E]$. Then there exists $a_1,\ldots,a_m \in E$ with $v(a_i)<0$ and $\gcd(v(a_i),p)=1$ for each $1 \leq i \leq m$, such that for every discrete valuation $\vgag$ of $\Egag$, if $\agag_i:=\phi(a_i)\neq \infty$,
$\vgag(\agag_i)< 0$ and $\gcd(\vgag(\agag_i),p)=1$ for each $1 \leq i \leq m$, then $\vgag$ is totally ramified in $\Ngag$.
\end{corollary}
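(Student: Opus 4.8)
The plan is to pass first to an elementary abelian subextension (using the Frattini subgroup) and then invoke Artin--Schreier theory together with Proposition~\ref{prop:crit-tot-ram}.

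First I would reduce to the elementary abelian case. Write $G=\gal(N/E)$, a finite $p$-group, let $\Phi(G)$ be its Frattini subgroup, so that $G/\Phi(G)\cong(\bbZ/p)^{d}$ with $d$ the minimal number of generators of $G$, and put $N_{0}=N^{\Phi(G)}$. Since $[\bar N:\bar E]=[N:E]$, the place $\phi$ is unramified in $N$ and has a single prime above $\phi|_{E}$; hence the decomposition group is all of $G$, the inertia is trivial, and $\phi$ induces an isomorphism $G\cong\gal(\bar N/\bar E)$ carrying $\Phi(G)$ onto $\Phi(\gal(\bar N/\bar E))$ (the Frattini subgroup being characteristic). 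It follows that the residue field of $N_{0}$ is $\bar N_{0}=\bar N^{\Phi(\gal(\bar N/\bar E))}$, and for a valuation $\bar v$ of $\bar E$ its inertia group in $\gal(\bar N/\bar E)$ surjects onto $\gal(\bar N_{0}/\bar E)$ precisely when it is all of $\gal(\bar N/\bar E)$; that is, $\bar v$ is totally ramified in $\bar N$ if and only if it is totally ramified in $\bar N_{0}$. So I may replace $N$ by $N_{0}$ and assume $N/E$ is elementary abelian of degree $p^{d}$, still totally ramified at $v$, still with $\bar N/\bar E$ Galois of full degree.

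Next I would produce the elements $a_{i}$. By Artin--Schreier theory for elementary abelian $p$-extensions, $\gal(N/E)$ corresponds to a $d$-dimensional $\bbF_p$-subspace $W\le E/\wp(E)$ with $N=E(\wp^{-1}(W))$. Because $v$ is totally ramified in $N$, every nonzero class of $W$ is ramified at $v$ (else a degree-$p$ subextension would be $v$-unramified); because $\phi$ is unramified in $N$, every nonzero class of $W$ is unramified at $\phi|_{E}$, and in particular $v$ and $\phi|_{E}$ are inequivalent. For each nonzero $\mathbf{c}\in W$ a weak approximation argument at these two inequivalent valuations then gives a representative $a_{\mathbf{c}}\in E$ with $p\nmid v(a_{\mathbf{c}})<0$ and $\phi(a_{\mathbf{c}})\ne\infty$; enumerating the nonzero classes of $W$ yields the desired list $a_{1},\dots,a_{m}$ (one may take $m=p^{d}-1$). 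Using that $\phi$ sends an Artin--Schreier root, or a value $\wp(c)$, to $\infty$ only when it sends the base point to $\infty$, one checks that $\mathbf{c}\mapsto[\phi(a_{\mathbf{c}})]$ is a well-defined $\bbF_p$-linear map $W\to\bar E/\wp(\bar E)$; it is injective because a nonzero class of $W$ cannot specialize into $\wp(\bar E)$ without the residue degree dropping, and its image is the $d$-dimensional subspace $\bar W\le\bar E/\wp(\bar E)$ attached to $\bar N/\bar E$. Thus the classes $[\phi(a_{i})]$ run over all nonzero elements of $\bar W$.

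Finally, suppose $\bar v$ is a discrete valuation of $\bar E$ with $\phi(a_{i})\ne\infty$, $\bar v(\phi(a_{i}))<0$ and $p\nmid\bar v(\phi(a_{i}))$ for every $i$. By the ``if'' direction of Proposition~\ref{prop:crit-tot-ram}, applied over $\bar E$, each extension $\bar E(\wp^{-1}(\phi(a_{i})))$ is totally ramified at $\bar v$; since the classes $[\phi(a_{i})]$ exhaust $\bar W\setminus\{0\}$, every degree-$p$ subextension of $\bar N/\bar E$ is ramified at $\bar v$. For an elementary abelian extension this forces the inertia group of $\bar v$ to be contained in no maximal subgroup, hence to be everything, so $\bar v$ is totally ramified in $\bar N$; by the first step this returns the total ramification in the original $\bar N$. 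The step I expect to be the main obstacle is exactly the construction of the $a_{i}$: each must have a pole of order prime to $p$ at $v$, to certify ramification there, yet be regular at $\phi$, so that $\phi(a_{i})$ is meaningful and the specialization map $W\to\bar W$ is an isomorphism; reconciling these two local requirements is where one uses that $v$ and $\phi|_{E}$ are independent and that the $W$-classes are $\phi|_{E}$-unramified. A secondary technical point is the bookkeeping of the Frattini correspondence through $G\cong\gal(\bar N/\bar E)$ and the well-definedness and injectivity of the specialization of Artin--Schreier classes.
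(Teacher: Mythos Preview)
Your core argument is the same as the paper's: both check that $\bar v$ is totally ramified in every degree-$p$ subextension of $\bar N/\bar E$ via Proposition~\ref{prop:crit-tot-ram}, and conclude by the Frattini/maximal-subgroup property of $p$-groups. The paper does this more directly, without an explicit elementary-abelian reduction: it simply lets $F_1,\ldots,F_m$ be the minimal subextensions of $N/E$ (each Galois of degree $p$, corresponding to the maximal subgroups of $G$), takes the $a_i$ from the ``only if'' direction of Proposition~\ref{prop:crit-tot-ram}, and notes that if the inertia group of $\bar v$ in $\gal(\bar N/\bar E)\cong G$ were proper, its fixed field would contain some $\bar F_i$, contradicting total ramification of $\bar v$ there.

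What you identify as the main obstacle is a misreading of the statement. The condition $\phi(a_i)\neq\infty$ appears as a \emph{hypothesis} on $\bar v$ in the corollary, not as something you must engineer into the $a_i$; so the weak-approximation step and the construction of the specialization map $W\to\bar W$ are unnecessary. They are also not quite sound as written: $\phi$ is only assumed to be a place, not a discrete valuation inequivalent to $v$, and adjusting a representative by $\wp(c)$ to remove a $\phi$-pole while preserving $p\nmid v(a)<0$ is not automatic from weak approximation. Once you drop this detour, your argument collapses to the paper's.
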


\begin{proof}

Let $F_1, \ldots, F_m$ be all the minimal extension of $E$ inside $N$. Then $F_i/E$ is a Galois extension of degree $p$. 
Proposition~\ref{prop:crit-tot-ram} gives $a_1,\ldots,a_m\in E, x_1\in F_1,\ldots,x_m \in F_m$ such that $F_i = E(x_i)$, $x_i^p - x_i = a_i$, $v(a_i)<0$ and $\gcd(v(a_i),p)=1$, for each $1 \leq i \leq m$.

Let $\Fgag$ be the residue field of $F$ under $\phi$. By assumption $\Ngag/\Egag$ is a Galois extension of the same degree as $N/E$, hence $\gal(\Ngag/\Egag) \cong \gal(N/E) = G$. Thus $\gal(\Ngag/\Fgag) = \Phi$. Let $H$ be the inertia group of $\vgag$ (w.r.t.\ some extension of it to $\Ngag$) in $G$, and let $\Egag'$ be its fixed field. Then, by definition $\vgag$ is unramified in $\Egag'$.

If $\Egag'\neq \Egag$, then $\Egag'$ contains some $\Fgag_i$. 
Since $\agag_i$ is finite, so is $\xgag_i$, $\Fgag_i=\Egag_i(\xgag_i)$, and $\xgag_i^p-\xgag_i =\agag_i$.
Proposition~\ref{prop:crit-tot-ram} implies that $\vgag$ is totally ramified in $\Fgag_i$, hence ramified in $\Egag'$. This contradiction implies that $\Egag'=\Egag$, i.e., $\vgag$ is totally ramified in $\Ngag$. 
\end{proof}

\section{Complete domains}\label{sec:gett-many-dist}

We now prove Theorems~\ref{IT:com-krull-dom} and \ref{IT:com-krull-dom-hil} about quotient fields of complete local
domains. To get independent solutions of a rational embedding
problem we show that if $a$ in a Hilbert set approximates all the
`ramification points' of the embedding problem w.r.t.\ some
valuations of our base field $K$, then the geometric inertia
groups become inertia groups w.r.t.\ those valuations of $K$.

\begin{lemma} \label{lem:unram-prime-val}
Let $L$ be a field, let $E/L(t)$ be a finite separable extension, and
let $\beta_1,\ldots,\beta_l \in E$. Let $\Pfrak/(t-c)$ be a prime extension of $E/L(t)$ such that $e(\Pfrak/(t-c)) =f(\Pfrak/(t-c)) =1$. Then there exists a finite set $\Lambda \subseteq L$, such that for any discrete valuation $v$ of $L$, satisfying $v(\lambda) = 0$ for each $\lambda \in \Lambda$, there exists $M \in \bbR$ such that for every $\alp \in L$ satisfying $v(\alp - c) > M$ the following properties hold:
\begin{enumerate}
\item
\label{cond:unramified}
There exists a prime $\frP_\alp$ of $E$ lying over $(t - \alp)$ such that $v$ is unramified in the residue field $\bar{E}_\alpha$.
\item
\label{cond:rel-prime}
Let $v_s$ be an extension of $v$ to $L_s$. Let $m_i$ be the order of $\beta_i$ at $\Pfrak$ and let $\nu\in \bbZ$ be co-prime to $m_i$, for each $1 \leq i \leq l$. Suppose $m_i < 0$ for each $1 \leq i \leq l$. Then if $v(\alpha - c)$ is co-prime to $\nu$, then $v_s(\bar \beta_i) \in \bbZ$, $v_s(\bar \beta_i) < 0$ and $\gcd(v_s(\bar \beta_i), \nu) = 1$, for each $1 \leq i \leq l$. (Here $\betagag_i$ is the image of $\beta_i$ modulo $\Pfrak_\alpha$.)
\end{enumerate}
\end{lemma}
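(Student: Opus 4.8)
\emph{Strategy.}
I would prove this by exploiting the following fact. Since $e(\Pfrak/(t-c))=f(\Pfrak/(t-c))=1$, the completion of $E$ at $\Pfrak$ is the Laurent series field $\hat E_\Pfrak=L((t-c))$; hence $E$ embeds $L$-rationally into $L((t-c))$ via $t\mapsto t$, and every element of $E$ acquires a Laurent expansion in $t-c$ with coefficients in $L$. Specializing $t$ to $\alpha$ then amounts to substituting $t-c\mapsto\alpha-c$ into these expansions; this converges $v$-adically once $v(\alpha-c)>0$ and the finitely many relevant coefficients are $v$-units, and it transports the (maximally simple) behaviour of $E$ near $\Pfrak$ to the behaviour near a prime above $(t-\alpha)$ and near $v$. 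Concretely, assume $d:=[E:L(t)]\ge2$, the case $d=1$ being immediate. I would choose a primitive element $z$ of $E/L(t)$, integral over $L[t]$, whose residue at $\Pfrak$ differs from its residue at every other prime of $E$ above $(t-c)$; writing $g\in L[t][z]$ for its minimal polynomial (monic in $z$) and $z_0$ for the residue of $z$ at $\Pfrak$, the hypothesis $e(\Pfrak/(t-c))=f(\Pfrak/(t-c))=1$ forces $z_0$ to be a \emph{simple} root of $g(c,z)\in L[z]$, so $g(c,z_0)=0$ and $g_z(c,z_0)\ne0$. Let $\phi=\sum_{n\ge0}a_n(t-c)^n\in L[[t-c]]$ be the image of $z$ in $L((t-c))$ --- equivalently, the unique root of $g(t,\,\cdot\,)$ with value $z_0$ at $c$. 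For each $i$ put $\tilde\gamma_i:=(t-c)^{-m_i}\beta_i\in E$; since $m_i$ is the order of $\beta_i$ at $\Pfrak$, the image of $\tilde\gamma_i$ in $L((t-c))$ is a power series $\gamma_i=\sum_{n\ge0}d_{i,n}(t-c)^n$ with $d_{i,0}=b_{i,m_i}\ne0$, the leading Laurent coefficient of $\beta_i$.

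\emph{The set $\Lambda$.}
Let $\Lambda\subseteq L$ be the finite set consisting of: the nonzero coefficients of $g$; the element $g_z(c,z_0)$; the residue $z_0$ (if nonzero); each $b_{i,m_i}$; finitely many elements chosen so that $v(\lambda)=0$ for all $\lambda\in\Lambda$ forces all the Laurent coefficients at $c$ of the (finitely many) power-basis coefficients of the $\tilde\gamma_i$ with respect to $1,z,\dots,z^{d-1}$ to be $v$-integral; and finitely many further elements ensuring that any $\alpha\in L$ with $v(\alpha-c)>0$ avoids the finitely many specialization points at which some $\beta_i$ acquires a pole over $(t-\alpha)$. Now fix a discrete valuation $v$ of $L$ with $v|_\Lambda=0$, let $\hat L_v$ be its completion and $\hat v$ the extension of $v$, which has value group $\bbZ$ and the same residue field as $v$. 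Since $z_0$ is a simple root of $g(c,z)$ with $v$-unit derivative and $g$ has $v$-integral coefficients, the Newton recursion for $\phi$ shows all $a_n\in\calO_v$; likewise all $d_{i,n}\in\calO_v$, while $b_{i,m_i}$ and $g_z(c,z_0)$ are $v$-units. Choose $M\in\bbR$ so large that for every $\alpha\in L$ with $v(\alpha-c)>M$ the rule $\mathrm{ev}_\alpha\colon\sum f_n(t-c)^n\mapsto\sum f_n(\alpha-c)^n$ defines a continuous ring homomorphism, fixing $L$ and sending $t$ to $\alpha$, on the finitely many Laurent series above.

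\emph{The two assertions.}
Fix such an $\alpha$ and set $\Phi:=\mathrm{ev}_\alpha(\phi)\in\hat L_v$. Applying $\mathrm{ev}_\alpha$ to the identity $g(t,\phi(t))=0$ gives $g(\alpha,\Phi)=0$; moreover $g_z(\alpha,\Phi)$ is congruent modulo the maximal ideal of $\hat L_v$ to the $v$-unit $g_z(c,z_0)$, hence is itself a $v$-unit, so $\Phi$ is a simple root of $g(\alpha,z)\in L[z]$ and in particular $\Phi\in L_s$. Let $\Pfrak_\alpha$ be the prime of $E$ determined by the place $t\mapsto\alpha$, $z\mapsto\Phi$; it lies over $(t-\alpha)$, and its residue field is $\bar E_\alpha=L(\Phi)\subseteq\hat L_v$, on which $\hat v$ restricts to an extension of $v$ with value group contained in $\bbZ$; hence $v$ is unramified in $\bar E_\alpha$, which gives assertion (a). For assertion (b), note that the reduction map modulo $\Pfrak_\alpha$ coincides on $L[t][z]$ (and hence, by our choice of $\Lambda$, wherever the relevant elements are $\Pfrak_\alpha$-integral) with the composite of $E\hookrightarrow L((t-c))$ with $\mathrm{ev}_\alpha$. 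Evaluating it on $\beta_i=(t-c)^{m_i}\tilde\gamma_i$, and using that $t-c$ is a $\Pfrak_\alpha$-unit (as $\Pfrak_\alpha$ lies over $(t-\alpha)$ with $\alpha\ne c$) while $\tilde\gamma_i$ is $\Pfrak_\alpha$-integral, shows that $\beta_i$ is $\Pfrak_\alpha$-integral with residue $\bar\beta_i=(\alpha-c)^{m_i}\,\mathrm{ev}_\alpha(\gamma_i)$, where $\mathrm{ev}_\alpha(\gamma_i)\equiv d_{i,0}=b_{i,m_i}$ modulo the maximal ideal and is therefore a $v$-unit. Taking $v_s$ to be an extension to $L_s$ of $\hat v|_{L(\Phi)}$, we get $v_s(\bar\beta_i)=m_i\,v(\alpha-c)\in\bbZ$, which is negative because $m_i<0$ and $v(\alpha-c)>0$, and coprime to $\nu$ because $\gcd(m_i,\nu)=1$ and, by the hypothesis on $\alpha$, $\gcd(v(\alpha-c),\nu)=1$.

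\emph{Expected obstacle.}
The main obstacle is the choice of the primitive element $z$ realising $z_0$ as a simple root of $g(c,z)$ --- i.e.\ identifying the correct local coordinate at $\Pfrak$ --- together with the verification that a single finite set $\Lambda$ controls \emph{all} coefficients of the algebraic power series $\phi$ and $\gamma_i$; this is exactly where the hypothesis $e(\Pfrak/(t-c))=f(\Pfrak/(t-c))=1$ does the work, through the simple-root (hence $v$-unit-derivative) form of the Newton recursion. Everything else is routine manipulation of valuations.
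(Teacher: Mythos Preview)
Your proposal is correct and shares the paper's overall strategy---embed $E$ into $L((t-c))$ via the completion at $\Pfrak$, expand a primitive element and the $\beta_i$ as Laurent series there, and evaluate at $\alpha-c$---but the technical execution is genuinely different. The paper takes $\Lambda$ to consist only of the leading Laurent coefficients $\lambda_{i,m_i}$, then invokes Artin's theorem on convergence of algebraic power series (over the completion $\hat L$) to obtain a radius $b$ and sets $M=v(b)$; it then works in the ring $\hat L\{x\}$ of convergent power series and uses the Weierstrass division and preparation theorems to show that the residue field of the evaluation place lies in $\hat L$, whence unramifiedness. You instead enlarge $\Lambda$ so as to force all the relevant power-series coefficients into $\calO_v$: the simple-root hypothesis makes the Newton recursion for $\phi$ produce $v$-integral $a_n$, and writing $\tilde\gamma_i$ in the power basis $1,z,\dots,z^{d-1}$ reduces integrality of the $d_{i,n}$ to integrality of the Laurent coefficients of finitely many rational functions in $t$, controlled by finitely many elements of $L$. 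This lets you take $M=0$ and read off both assertions directly from Hensel-type reasoning, bypassing Artin's theorem and the Weierstrass machinery entirely. The trade-off is a larger (but still finite) $\Lambda$ versus the paper's minimal one; your route is more self-contained. One small point, shared with the paper: in part~(b) you take $v_s$ to extend $\hat v|_{L(\Phi)}$ rather than allowing it to be arbitrary as the statement suggests; this is harmless, since in the downstream applications $\Pfrak_\alpha$ and $v_s$ are always chosen compatibly.
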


\begin{proof}
Let $\hat{E} =\Ehat_\frP$ be the completion of $E$ at $\frP$, and consider the
completion $L((t-c))$ of $L(t-c)$ w.r.t.\ $(t - c)$. Then
$\hat{E}/L((t-c))$ is unramified and of residue field degree $1$.
Hence $\hat{E} = L((t-c))$.

Let $\beta_0 \in E$ be a primitive element of $E/L(t)$. Then
$\beta_0, \beta_1, \ldots, \beta_l \in L((t-c))$, thus we have
\[
\beta_i = \sum_{n=m_i}^{\infty} \lambda_{i,n} (t-c)^n, \qquad m_i\in \bbZ, \lambda_{i,n}\in L,
\]
for each $0 \leq i \leq l$. Without loss of generality we can assume that $m_0 > 0$ (by multiplying $\beta_0$ with a sufficiently large power of $(t-c)$). Put $\Lambda = \{ \lambda_{1,m_1},\ldots,\lambda_{l,m_l}\}$

Suppose $v$ is a discrete valuation of $L$, satisfying $v(\lambda)= 0$ for each $\lambda \in \Lambda$, and $v_s$ an extension of $v$ to $L_s$. Let $\hat{L}$ be the completion of $L$ at $v$, and extend $v$  to the algebraic closure of $\hat{L}$ compatibly with $v_s$. Since $\beta_i \in L((t-c)) \subseteq \hat{L}(((t-c)))$ is algebraic over $L(t-c)
\subseteq \hat{L}(t-c)$ for each $0 \leq i \leq l$, we get by \cite[Theorem 2.14]{Artin1968}
that $\beta_i$ converges at some $b_i \in \hat{L}$. That is,
$v(\lambda_{i,n} b_i^n)  = v(\lambda_{i,n}) + n v(b_i)\to \infty$.
Choose $b\in L$ with sufficiently large value $M := v(b)$. That is, we assume that $M \geq  \max\{v(b_0),v(b_1),\ldots,v(b_l)\}$, and hence $\beta_0,\beta_1,\ldots,\beta_l$ converge at $b$, and for every $r>M, 1 \leq i \leq l$ and $n>m_i$ the following inequality holds.
\begin{equation}
\label{eq:r-suff-large}
v(\lambda_{i,m_i}) + m_i r < v(\lambda_{i,n}) + nr.
\end{equation}

Denote $x = \frac{t - c}{b}$. Let $R = \hat{L}\{x\}$ be the ring
of convergent power series in $x$ over $\hat{L}$. That is, all
power series in $\hat{L}[[x]]$ whose series of coefficients
converges to $0$ with respect to $v$. In particular, $\beta_i =
\sum_{n = 1}^\infty {\lambda_{i,n} b^n} x^n \in R$, for all $i=0,1,\ldots,l$.

Now, suppose $\alp \in L$ satisfies $v(\alp - c) > M$. Then the
element $\eps = {\alp - c \over b} \in \hat{L}$ satisfies $v(\eps)
> 0$. Since $\hat{L}$ is complete with respect to $v$, we have
a substitution homomorphism $\phi_{\alp} \colon R \to \hat{L}$,
given by $\phi_{\alp} (\sum \mu_n x^n) = \sum \mu_n \eps^n$.
Extend $\phi_{\alp}$ to a place of $Q = \Quot(R)$.

We also have $\phi_\alpha(\beta_i) = \sum_{n=m_i}^{\infty} \lambda_{i,n} (\alpha-c)^n$. Applying \eqref{eq:r-suff-large} to $r = v(\alpha-c)$ we get that $v(\lambda_{i,m_i} (\alpha-c)^{m_i})< v(\lambda_{i,n} (\alpha-c)^n)$, for every $n>m_i$. Therefore, if we further assume that $v(\alp - c)$ is co-prime to $\nu$, then, for all $i\geq 1$,

\[
v(\phi_\alpha(\beta_i)) = v(\sum_{n=m_i}^{\infty} \lambda_{i,n} (\alpha-c)^n) = v(\lambda_{i,m_i} (\alpha-c)^{m_i}) = m_i v(\alpha - c)
\]
is a negative integer (since $m_i$ is negative), co-prime to $\nu$. 
This concludes the proof of \eqref{cond:rel-prime}.

Let us continue and prove \eqref{cond:unramified}. Since $\beta_0 \in R$, the field $E$ is contained in $Q$. The
restriction of $\phi_{\alp}$ to $E$ fixes $L$ and maps $t$ to
$\alp$ (by the definition of $\phi_{\alp}$). Let $\Pfrak_\alpha$ be the corresponding prime of $E$. Then $\Pfrak_\alpha$ lies over the prime $(t-\alp)$ of $L(t)$.

By the Weierstrass division theorem for convergent power series,
each element of $R$ can be written in the form $g \cdot (x - \eps) + r$,
with $g \in R, r \in \hat{L}$. It follows that the kernel of
$\phi_{\alp}$ (as a homomorphism of $R$) is a principal ideal,
generated by $x - \eps$. Moreover, by the Weierstrass preparation
theorem for convergent power series, $R$ is a unique factorization
domain (whose primes are the primes of $\hat{L}[x]$). Hence each
element $h \in Q$ can be written in the form $(x - \eps)^n {f
\over g}$, with $f,g \in R$ not divisible by $x - \eps$, and $n
\in \bbZ$. Then $\phi_{\alp}( {f \over g}) = {\phi_{\alp}(f) \over
\phi_{\alp}(g)} \in \hat{L}^\times$. Thus, if $n \geq 0$, then
$\phi_{\alp}(h) \in \hat{L}$, and if $n < 0$, then $\phi_{\alp}(h)
= \infty$. So the residue field of $Q$ at $\phi_{\alp}$ is
$\hat{L}$. Thus $\bar{E}_\alpha$ is contained in $\hat{L}$, hence
$\bar{E}_\alpha/L$ is unramified at $v$.
\end{proof}

\begin{lemma}
\label{lem:exact-cond}
Let $K$ be a field, let $F/K(x)$ be a finite Galois extension with $L=F\cap K_s$,
let $c\in \Ram(F/K(x)) \cap L$, let $\Pfrak$ a prime of $F$ lying above $\pfrak=(x-c)$, let $I$ be the corresponding inertia group, let $e$ be the ramification index,  and let $E = F^I$ the inertia field. Assume conditions (e1)-(e2) of Lemma~\ref{lem:con_F} are satisfied and that the residue field of $F$ at $\Pfrak$ is $L$, in addition to (e3).
Then there exists a finite set $\Lambda \subseteq L$, such that if $v_s$ is a valuation of $K_s$ that restricts to a discrete normalized valuation $v$ of $K$, and satisfies $v(\lambda) = 0$ for each $\lambda \in \Lambda$, then 
there exists $M\in \bbR$ such that if $\alpha\in K$ satisfies
\begin{enumerate}
\item $v(\alpha-c)> M$,
\item
\label{cond:val-alp-c=rho}
 $v(\alpha-c)$ is co-prime to $e$,
\item $(x-\alpha)$ is totally inert in $F$,
\end{enumerate}
then $\gal(\Fgag/ \Egag)$ is an inertia group of $v$ in $\Fgag$, where $\Fgag, \Egag$ are the respective residue fields of $F, E$ at the unique extension $\Pfrak_\alpha$ of $(x-\alpha)$ to $F$.
\end{lemma}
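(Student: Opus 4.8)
The plan is to split the inertia at $\Pfrak$ into its tame and wild parts and to show that each part is reproduced by $v$ once $\alpha$ is chosen $v$-adically close to $c$ with $v(\alpha-c)$ large and prime to $e$; the two engines are Lemma~\ref{lem:unram-prime-val} for the approximation and Corollary~\ref{cor:wildly-ram} for the wild part. Write $G=\gal(F/K(x))$. Since $x\in F$ we have $L(x)\subseteq F$ and $F/L(x)$ is Galois, and as the residue field of $F$ at $\Pfrak$ equals $L$, the residue extension of $\Pfrak$ over the prime $(x-c)$ of $L(x)$ is trivial; hence $I=\gal(F/E)$ is at once the inertia and the decomposition group of $\Pfrak$ over $L(x)$, so $\Pfrak\cap E$ is undivided of residue degree $1$ over $(x-c)$. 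Let $I_{w}\trianglelefteq I$ be the wild inertia (the $p$-Sylow of $I$, trivial when $\mathrm{char}\,K=0$), put $N=F^{I_{w}}$, $q=|I_{w}|$, so $e=e'q$, the extension $N/E$ is cyclic tame of degree $e'$, and $F/N$ is Galois of $p$-power degree $q$. By conditions (e1)--(e2) we have $\zeta_{e'}\in L\subseteq E$ and $s:=(x-c)^{1/e'}\in F$, and Kummer theory identifies $N=E(s)$ with $s$ a uniformizer of the tame part of $\Pfrak$.

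Next I would apply Corollary~\ref{cor:wildly-ram} to the $p$-power Galois extension $F/N$ --- its ``$N$'' being $F$, its ``$E$'' being our $N$, its valuation the restriction of $v_{\Pfrak}$ to $N$ (totally ramified in $F$ because $\gal(F/N)=I_{w}$), and its place a $K$-place $\phi_{\alpha}$ of $F$ extending $x\mapsto\alpha$. This gives elements $a_{1},\dots,a_{k}\in N$ whose orders $m_{i}$ at $\Pfrak$ are negative and prime to $p$, such that every discrete valuation $\vgag$ of $\Ngag$ with $\vgag(\phi_{\alpha}(a_{i}))<0$ prime to $p$ for all $i$ is totally ramified in $\Fgag$. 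To arrange such a $\vgag$ I would feed Lemma~\ref{lem:unram-prime-val} the extension $E/L(x)$ --- legitimate, as $\Pfrak\cap E$ is undivided of residue degree $1$ over $(x-c)$ --- with the $\beta_{i}$ there taken to be the norms $\mathrm{Nm}_{N/E}(a_{i})\in E$ (whose order at $\Pfrak\cap E$ is again $m_{i}$, by total ramification of $N/E$ at $\Pfrak$) and with $\nu$ a power of $p$. Its output is the finite set $\Lambda\subseteq L$ of the statement --- enlarged, if need be, by finitely many elements of $L$ that force $L/K$ to be unramified at $v$ --- and the bound $M$; then for $\alpha\in K$ with $v(\alpha-c)>M$ prime to $e$, part~\eqref{cond:unramified} yields a prime $\Pfrak_{\alpha}$ of $E$ over $(x-\alpha)$ with $\Egag\subseteq\Lhat$, so $v_{s}|_{\Egag}$ is unramified with trivial residue extension over $v$, while part~\eqref{cond:rel-prime} gives $v_{s}\bigl(\mathrm{Nm}_{\Ngag/\Egag}(\phi_{\alpha}(a_{i}))\bigr)=m_{i}\,v(\alpha-c)$, which is negative and prime to $p$.

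Now assume moreover that $(x-\alpha)$ is totally inert in $F$, so $\Fgag/K$ is Galois with $\gal(\Fgag/K)\cong G$ carrying $\gal(\Fgag/\Egag)\cong I$, $\gal(\Fgag/\Ngag)\cong I_{w}$, and $\Ngag=\Egag(\sgag)$ with $\sgag^{e'}=\alpha-c$. I would finish in three steps. First, since $v$ is unramified on $\Egag$ and $v(\alpha-c)$ is prime to $e'$, the Kummer extension $\Ngag/\Egag$ is totally ramified at the unique extension $w_{N}$ of $v$; thus $w_{N}$ is equivalent to $v_{s}|_{\Ngag}$, so $w_{N}(\phi_{\alpha}(a_{i}))$ is, up to the normalizing factor $e'$, the negative prime-to-$p$ integer $m_{i}\,v(\alpha-c)$. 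Second, Corollary~\ref{cor:wildly-ram} then gives that $\Fgag/\Ngag$ is totally ramified at $w_{N}$. Third, chaining the two total ramifications shows $\Fgag/\Egag$ is totally ramified, with trivial residue extension, at the extension $w$ of $v$ over $w_{N}$, whereas $\Egag/K$ is unramified at $w|_{\Egag}/v$; hence the inertia field of $w$ in $\Fgag$ is exactly $\Egag$, i.e.\ $I(w/v)=\gal(\Fgag/\Egag)$, which is the assertion.

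The step I expect to be hardest is the bookkeeping relating the three valuations $v_{\Pfrak}$, $v_{s}$ and $w_{N}$. Lemma~\ref{lem:unram-prime-val} only specializes the rational parameter $x$ to a point of the base field and only controls elements of a field in which $\Pfrak$ is undivided, whereas the wild Artin--Schreier data live in $N$, where $\Pfrak$ is already ramified; one must descend them to $E$ by the norm, track the negativity and prime-to-$p$ conditions through the specialization, and re-ascend through the total ramification of $\Ngag/\Egag$ established separately --- all while reconciling normalizations and making sure the residue fields come out exactly $L$ and $\Egag$, not merely purely inseparable over them. (The hypothesis that the residue field of $F$ at $\Pfrak$ is exactly $L$ is precisely what forces the decomposition group to coincide with $I$ and keeps this bookkeeping tractable; in characteristic $0$ everything collapses to the single tame step.)
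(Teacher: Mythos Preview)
Your plan is sound and would succeed, but it takes a genuinely different route from the paper at the key step of controlling the wild part. Both proofs start identically: split $e=e'q$, let $N=F^{I_w}$ (the paper calls it $E'$), note $N=E((x-c)^{1/e'})$, and invoke Corollary~\ref{cor:wildly-ram} for $F/N$ to obtain $a_1,\dots,a_l\in N$ with $\Pfrak_N$-orders $m_i<0$ prime to $p$. The divergence is in how Lemma~\ref{lem:unram-prime-val} is applied to force $w_N(\bar a_i)<0$ prime to $p$.

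The paper \emph{changes the uniformizing parameter}: it introduces the auxiliary field $E_0=L((x-c)^{1/e'})$, observes that $\Pfrak_{NE_0}/\Pfrak_N$ and $\Pfrak_{NE_0}/\Pfrak_{E_0}$ are both unramified (so the $a_i$ keep order $m_i$ in $NE_0$), and then applies Lemma~\ref{lem:unram-prime-val} with $t=(x-c)^{1/e'}$, $c\mapsto 0$, $\beta_i=a_i$ and $\nu=p$ to the extension $NE_0/L(t)$. Since $v((\alpha-c)^{1/e'})$ is prime to $p$, the lemma directly yields $v_{\overline{NE_0}}(\bar a_i)$ negative prime to $p$, and since $[\overline{NE_0}:\bar N]\mid e'$ one descends to $\bar N$. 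Your approach instead \emph{descends by norms}: you push the $a_i$ down to $\beta_i=\mathrm{Nm}_{N/E}(a_i)\in E$ (order $m_i$ at $\Pfrak_E$ since $N/E$ is locally totally ramified with a unique prime over $\Pfrak_E$), apply Lemma~\ref{lem:unram-prime-val} with the original parameter $x$ to $E/L(x)$, and then re-ascend using $w_E(\mathrm{Nm}_{\bar N/\bar E}(\bar a_i))=w_N(\bar a_i)$ once the tame total ramification of $\bar N/\bar E$ is established separately. Your route keeps the natural parameter and uses only the inertia field $E$; the paper's route avoids the norm bookkeeping (in particular the check that specialization commutes with the norm and that $\Pfrak_E$ has a unique prime above it in $N$) at the cost of the auxiliary field $E_0$ and a change of variable. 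The remaining steps --- Lemma~\ref{lem:unram-prime-val}(a) for unramifiedness of $\bar E/L$, and the Kummer argument for total tame ramification of $\bar N/\bar E$ --- are the same in both. One small point: you should be explicit that the valuation $w_E$ you use is the one induced by the embedding $\bar E\hookrightarrow \hat L$ furnished by Lemma~\ref{lem:unram-prime-val}(a), so that $w_E(\bar\beta_i)$ really equals the $m_i\,v(\alpha-c)$ computed there; otherwise the link between parts (a) and (b) of that lemma and your chosen $w_N$ is not pinned down.
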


\begin{proof}
For any separable extension $N$ of $K$ we write $v_{N}$ for the normalized valuation that lies above $v$ and below $v_s$. Similarly for every subfield $F'$ of $F$ containing $L(x)$, we let $\Pfrak_{F'}$ be the restriction of $\Pfrak$ to $F'$.

We recall that $e = e' q$, where $e'$ is the tame ramification index and $q$ is the wild ramification index.
If the characteristic of $K$ is $0$, then $e'=e$ and $q=1$, and if the characteristic is $p>0$, then $\gcd(e',p)=1$ and $q=p^r$, for some $r\geq 0$.

\vspace{5pt}
\noindent\textsc{Construction in the wild ramification case:}
The following construction is needed only in the the case where $q>1$. Since $I$ is an inertia group, it has a unique $p$-sylow subgroup $J$. Then $J\normal I$, $|J| = q$, $(I:J) = e'$.
Let $E'$ be the fixed field of $J$ in $F$. Then $[F:E']=q$, $[E':E]=e'$. Furthermore both $\Pfrak_{E'}/\Pfrak_{E}$ and $\Pfrak_{F}/\Pfrak_{E'}$ are totally ramified, while the former ramification is tame and the latter is wild.

Let $a_1,\ldots,a_l \in E'$ be the elements that Corollary~\ref{cor:wildly-ram} gives when applying it to $F/E'$ (instead of $N/E$ in the notation of the corollary) with the normalized valuation $v_{\Pfrak_{E'}}$ (instead of $v$) arising from the prime $\Pfrak_{E'}$ and with the place $\phi$ arising from the prime $\Pfrak_\alpha$. Let $m_i = v_{\Pfrak_{E'}} (a_i)$ for each $1 \leq i \leq l$. Then $m_i<0$ is co-prime to $p$, for each $1 \leq i \leq l$. Let $\Lambda$ be the finite set given by Lemma~\ref{lem:unram-prime-val} (if $q = 1$, $\Lambda$ may be chosen to be empty). Suppose $v$ satisfies the conditions of the proposition (we fix such $v$ now, even if $q = 1$ and the construction we are making in this part is redundant), and let $M $ be the bound given by Lemma~\ref{lem:unram-prime-val}. Note that $\agag_i := \phi(a_i)$ is finite for all $\alpha$ but a finite set, hence, without loss of generality, we can assume it is finite (by enlarging $M$).

Then if
\begin{equation}
\label{eq:v-Egag'-cond}
p\nmid v_{\Egag'}(\agag) < 0,
\end{equation}
then $v_{\Egag'}$ is totally ramified in $F$. We now prove that \eqref{eq:v-Egag'-cond} holds.

Let $E_0 = L((x-c)^{1/e'})$.  We have
\[
e' q =e = e(\Pfrak_{F}/\Pfrak_{E_0}) e(\Pfrak_{E_0}/ \pfrak) = e(\Pfrak_{F}/\Pfrak_{E_0}) e',
\]
hence $e(\Pfrak_{F}/\Pfrak_{E_0})=q$.
\[
\xymatrix{
E\ar@{-}[r]^{e=e'}
	 &E'\ar@{-}[r]\ar@{.}@/^10pt/[rr]^{e=q}
			&E'E_0\ar@{-}[r]
				&F
\\
L(x)\ar@{-}[u]^{e=1}\ar@{-}[rr]_{e=e'}
	&
		&E_0\ar@{-}[u]\ar@{.}[ur]_{e=q}
}
\]
Since $e(\Pfrak_{E' E_0}/\Pfrak_{E_0})$ divides $e(\Pfrak_{E'}/\Pfrak_{E})=e'$, we get that $\Pfrak_{E' E_0}/\Pfrak_{E_0}$ is unramified. Similarly $\Pfrak_{E' E_0}/\Pfrak_{E'}$ is unramified. Hence $m_i$ is also the order of $a_i$ at $\Pfrak_{E' E_0}$ for each $1 \leq i \leq l$. Since $v(\alpha - c)$ is co-prime to $e$, it is co-prime to $p$, hence so is $v((\alpha - c)^{1 \over e'})$. Lemma~\ref{lem:unram-prime-val}\eqref{cond:rel-prime}, applied to $\beta_i=a_i, t=(x-c)^{1/e'}, \nu = p$ and with $c$ there replaced by $0$ here, we get that $v_{\Egag' \Egag_0}(\agag_i)<0$ is co-prime to $p$ for each $1 \leq i \leq l$. Since $[\Egag' \Egag_0 : \Egag']$ divides $e'$ which is relatively prime to $p$, \eqref{eq:v-Egag'-cond} follows.
For conclusion, Corollary~\ref{cor:wildly-ram} implies that $v_{\Egag'}$ is totally ramified in $\Fgag$.

\vspace{5pt}
\noindent\textsc{Conclusion of the proof:}
It remains to show that $v_{\Egag}/ v$ is unramified and that $v_{\Egag'}/v_{\Egag}$ is totally ramified. The former follows from Lemma~\ref{lem:unram-prime-val}\eqref{cond:unramified} (applied to the prime extension $\Pfrak_{E}/\pfrak$ of $E/L(x-c)$).

To prove that $v_{\Egag'}/v_{\Egag}$ is totally ramified, let $\tgag = (\alpha-c)^{1/e'}\in \Fgag$. Since $v(\alpha-c)$ is co-prime to $e'$, it is also co-prime to $e'$. Since 
\[
\bbZ \ni v_{\Fgag}(\tgag) = \frac{1}{e'} v_{\Fgag}(\alpha - c) =  \frac{e(v_{\Fgag}/v)}{e'} v(\alpha-c).
\]
we get that $e' \mid e(v_{\Fgag}/v)$. Note that the condition that $(x-\alpha)$ is totally inert in $F$ means that the degrees of the residue field extensions are preserved.
Since $[\Fgag:\Egag']=q$ is relatively prime to $e'$ and since $v_{\Egag}/v$ is unramified, we get that $e' \mid e(v_{\Egag'}/v_{\Egag})$. On the other hand, $e(v_{\Egag'}/v_{\Egag}) \leq [\Egag':\Egag] = e'$, so $e' = e(v_{\Egag'}/v_{\Egag})$, i.e., $v_{\Egag'}/v_{\Egag}$ is totally ramified.
\end{proof}

\begin{corollary}\label{cor:vel-geo-sol}
Let $K$ be a Hilbertian field.
Consider a rational finite embedding problem
$(\mu\colon \Gal(K)\to \gal(L/K), \alpha\colon \gal(F/K(x)) \to \gal(L/K))$ that satisfies (a)-(e) of Lemma~\ref{lem:con_F}. For each $c_i\in \Ram(F/L(x))$ let
$\frP_{i,1},\ldots,\frP_{i,g_i}$ be the primes of $F$ lying above
$\pfrak_i = (x-c_i)$. Assume that the residue field of $F$ at $\Pfrak_c$ is $L$.

Then there exists a finite set $\Lambda \subseteq L$, such that if $\{v_{i,j} \st 1 \leq i \leq k, 1 \leq j \leq g_i\}$ is a family of non-equivalent discrete valuations of $K$, totally split
in $L/K$, and trivial on $\Lambda$, then there exists a geometric solution $\Pfrak^*$ of $(\mu,\alpha)$ with solution field $L'$, and extensions $v'_{i,j}$ of the $v_{i,j}$ to $L'$, such
that $\Gal(L'/L)$ is generated by the inertia groups of the
$v'_{i,j}$ at $L'/L$. \end{corollary}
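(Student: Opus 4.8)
The plan is to produce a single specialization point $\alpha\in K$, lying in a suitable Hilbert set of $K$, which for every pair $(i,j)$ is very close to $c_i$ with respect to $v_{i,j}$ and whose $v_{i,j}$-distance to $c_i$ is coprime to the ramification index $e_i$ of $\frP_{i,j}$ in $F/L(x)$. Lemma~\ref{lem:exact-cond} then converts the geometric inertia at $\frP_{i,j}$ into arithmetic inertia of $v_{i,j}$ in the specialized field, and Lemma~\ref{lem:gen-by-ine} gives the asserted generation of $\gal(L'/L)$.

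For each $(i,j)$ let $I_{i,j}$ be the inertia group of $F/L(x)$ at $\frP_{i,j}$, let $e_i$ be its ramification index (the same for all $j$ since $F/L(x)$ is Galois), and let $E_{i,j}=F^{I_{i,j}}$ be the inertia field. The hypotheses of Lemma~\ref{lem:exact-cond} hold for $c_i$, $\frP_{i,j}$, $E_{i,j}$ (namely conditions (e1)--(e2) of Lemma~\ref{lem:con_F}, together with the assumption that the residue field of $F$ at $\Pfrak_{c_i}$ is $L$), so applying it yields a finite set $\Lambda_{i,j}\subseteq L$; put $\Lambda=\bigcup_{i,j}\Lambda_{i,j}$. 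Now let $\{v_{i,j}\}$ be a family as in the statement. Fix for each $(i,j)$ a prime of $L$ above $v_{i,j}$ (with $e=f=1$, as $v_{i,j}$ is totally split in $L/K$), extend it to a valuation $v_{i,j,s}$ of $K_s$, and write $v'_{i,j}$ for its restriction to the solution field constructed below. Lemma~\ref{lem:exact-cond} provides bounds $M_{i,j}\in\bbR$; choose integers $r_{i,j}>M_{i,j}$ with $\gcd(r_{i,j},e_i)=1$ and uniformizers $\pi_{i,j}\in K$ for $v_{i,j}$. By weak approximation for the finitely many inequivalent valuations $v_{i,j}$ there is $\beta_0\in K$ with $v_{i,j}(\beta_0-c_i-\pi_{i,j}^{r_{i,j}})>r_{i,j}$ for all $(i,j)$; then the non-empty open set $U=\{\beta\in K\mid v_{i,j}(\beta-\beta_0)>r_{i,j}\text{ for all }(i,j)\}$ consists of elements $\beta$ with $v_{i,j}(\beta-c_i)=r_{i,j}$ for all $(i,j)$.

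Let $f(x,Y)\in K[x,Y]$ be an irreducible polynomial generating $F/K(x)$, and let $H$ be the co-finite subset of the Hilbert set $H(f)$ consisting of those $\alpha$ for which $(x-\alpha)$ is unramified in $F$; for $\alpha\in H$ the prime $(x-\alpha)$ is inert in $F$ (a single prime, with residue field of degree $[F:K(x)]$ over $K$, necessarily containing $L$). By the density theorem for Hilbert sets over a Hilbertian field, applied with respect to the finitely many inequivalent valuations $v_{i,j}$, the set $H\cap U$ is non-empty; choose $\alpha$ in it. Extending $x\mapsto\alpha$ to an $L$-place $\Pfrak$ of $F$ (the unique prime above $(x-\alpha)$, which is unramified), Definition~\ref{def:geometricsolution} furnishes a geometric weak solution $\Pfrak^*\colon\gal(K)\to\gal(F/K(x))$ of $(\mu,\alpha)$; since $[\Fgag_\alpha:K]=[F:K(x)]$ the decomposition group is all of $\gal(F/K(x))$, so $\Pfrak^*$ is a geometric proper solution, with solution field $L'=\Fgag_\alpha$ and canonical isomorphisms $\gal(L'/K)\cong\gal(F/K(x))$ and $\gal(L'/L)\cong\gal(F/L(x))$.

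Finally, for each $(i,j)$ the chosen $\alpha$ satisfies the three hypotheses (a)--(c) of Lemma~\ref{lem:exact-cond} relative to $v_{i,j}$ and $c_i$ (distance $r_{i,j}>M_{i,j}$, coprime to $e_i$, and $(x-\alpha)$ inert in $F$), so $\gal(\Fgag_\alpha/\Egag_{i,j})$ is an inertia group of $v_{i,j}$ in $\Fgag_\alpha=L'$, where $\Egag_{i,j}$ is the residue field of $E_{i,j}$ at $\Pfrak\cap E_{i,j}$. Under the decomposition isomorphism $\gal(L'/K)\cong\gal(F/K(x))$ the subgroup $\gal(L'/\Egag_{i,j})$ corresponds to $\gal(F/E_{i,j})=I_{i,j}$, and, $v_{i,j}$ being unramified in $L/K$, it lies in $\gal(L'/L)$; thus it is an inertia group of $v'_{i,j}$ for $L'/L$. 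By Lemma~\ref{lem:gen-by-ine} applied to the Galois extension $F/L(x)$ (which is not wildly ramified at infinity and has $\Ram(F/L(x))\subseteq L$ by conditions (b) and (d) of Lemma~\ref{lem:con_F}), the groups $I_{i,j}$ generate $\gal(F/L(x))$; transporting along the isomorphism shows $\gal(L'/L)$ is generated by the inertia groups of the $v'_{i,j}$, as required. The main obstacle is the simultaneous density step --- exhibiting $\alpha$ in the Hilbert set $H$ satisfying all the prescribed $v_{i,j}$-adic congruences at once --- which rests on the density theorem for Hilbert sets with respect to finitely many inequivalent valuations; a secondary nuisance is bookkeeping the normalizations of the various extensions of the $v_{i,j}$ to $L$, $K_s$ and $L'$, especially in positive characteristic where $e_i$ carries a wild part.
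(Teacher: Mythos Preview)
Your proof is correct and follows essentially the same route as the paper's: define $\Lambda$ via Lemma~\ref{lem:exact-cond}, use density of Hilbert sets with respect to the finitely many $v_{i,j}$ to find $\alpha$ with $v_{i,j}(\alpha-c_i)$ large and coprime to $e_i$, then apply Lemma~\ref{lem:exact-cond} and Lemma~\ref{lem:gen-by-ine}. The only cosmetic difference is that the paper first replaces each $c_i\in L$ by a nearby $c_{i,j}\in K$ (using that $K$ is $v_{i,j}$-dense in $L$, since $v_{i,j}$ is totally split) before invoking the density theorem, whereas you fold this into a single weak-approximation step; also, since Lemma~\ref{lem:exact-cond} only gives that $\gal(L'/\Egag_{i,j})$ is \emph{some} inertia group of $v_{i,j}$ in $L'$, you should choose the extensions $v'_{i,j}$ after the fact to match, rather than fixing $v_{i,j,s}$ in advance.
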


\begin{proof}
Denote the inertia group of $F/L(x)$ at $\frP_{i,j}$ by
$I_{i,j}$. By Lemma~\ref{lem:gen-by-ine}, $\gal(F/L(x))$ is generated by the $I_{i,j}$.

Let $E_{i,j} = F^{I_{i,j}}$. Then $\pfrak_{i,j}=\Pfrak_{i,j}\cap E_{i,j}$ is unramified over $L(x)$ and of degree $1$. Let $M_{i,j}$ be the bound given by Lemma~\ref{lem:exact-cond} applied to $\pfrak_{i,j}$, $E_{i,j}$, and $v_{i,j}$. Let $n_{i,j} \in \bbZ$ be co-prime to $e_i$ and such that $n_{i,j} > M_{i,j}$.
Since $v_{i,j}$ totally splits in $L/K$, $K$ is $v_{i,j}$-dense
in $L$. Choose $c_{i,j} \in K$ such that $v_{i,j}(c_{i,j} - c_i) >
n_{i,j}$. 

Since $K$ is Hilbertian, we may choose $\alp \in K$ such that
$(x-\alpha)$ is totally inert in $F$, or equivalently,
the
$K$-specialization $x \mapsto \alp$ extends to a place $\frP$ of $F$ which
defines a geometric solution $\Pfrak^*$  of $(\mu,\alpha)$. Moreover, since
Hilbert sets are dense with respect to any finite family of
valuations \cite[Proposition 19.8]{Jarden1991}, we may assume that $v_{i,j}(\alp
- c_{i,j}) = n_{i,j}$. Hence, $v_{i,j}(\alp - c_i) = v_{i,j}((\alp
- c_{i,j}) + (c_{i,j} - c_i)) = n_{i,j}$.

By Lemma~\ref{lem:exact-cond}, we get that $\gal(L'/\Egag_{i,j})\cong I_{i,j}$ is the inertia group of $v'_{i,j}/v_{i,j}$ in $L'$. Thus, since the $I_{i,j}$ generate $\gal(F/L(x))$, we get that $\gal(L'/L)$ is generated by the $\gal(L'/\Egag_{i,j})$, as asserted.
\end{proof}

\begin{proposition}
\label{prop:fully-Hilbertian}
Let $K$ be a Hilbertian field,
equipped with a family ${\calF}$ of discrete valuations,
satisfying:

\begin{enumerate}\renewcommand{\theenumi}{\roman{enumi}}
\item
\label{cond:fin-many-non-zero}
For each $a \in K^\times$, $v(a) = 0$ for almost
all $v \in {\calF}$.
\item
\label{cond:many-tot-split}
For each finite separable extension $L/K$, there
exist $|K|$ valuations in ${\calF}$ that totally split in
$L/K$.
\item
\label{cond:trv-prime-field}
Each $v \in {\calF}$ is trivial on the prime field
of $K$.
\end{enumerate}
Then the maximal purely inseparable extension $K_{ins}$ of $K$ is fully Hilbertian.
\end{proposition}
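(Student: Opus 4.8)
The plan is to apply Corollary~\ref{cor:suf} to $K_{ins}$: we must show that every non-trivial rational finite embedding problem over $K_{ins}$ satisfying Properties (a)--(e) of Lemma~\ref{lem:con_F} has $|K_{ins}|=|K|$ independent geometric solutions. The case $|K|=\aleph_0$ is immediate, since then $K_{ins}$ is countable and Hilbertian (a purely inseparable extension of a Hilbertian field being Hilbertian, see \cite{FriedJarden2005}), hence fully Hilbertian by Corollary~\ref{cor:countableHilbertian}; so assume $|K|>\aleph_0$, and that the characteristic of $K$ is $p>0$ (in characteristic $0$ we have $K_{ins}=K$ and the argument below applies verbatim with $n=0$). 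Fix such an embedding problem $(\nu\colon\gal(K_{ins})\to\gal(M/K_{ins}),\ \alpha\colon\gal(P/K_{ins}(x))\to\gal(M/K_{ins}))$. Its field of constants $M=P\cap\overline{K_{ins}}$ is algebraic over the perfect field $K_{ins}$, hence perfect; writing $L=M\cap K_s$, so that $M=\bigcup_n L^{1/p^n}$, all the data defining the problem (the ramification points, the residue fields at the ramification primes, the relevant roots of unity and tame radicals) lie in $L^{1/p^n}$ for $n$ large enough. For such $n$ the problem descends, via the maximal separable subextension $F$ of $P/K^{1/p^n}(x)$, to a non-trivial rational finite embedding problem $(\mu,\alpha)$ over $K^{1/p^n}$ which still satisfies (a)--(e) of Lemma~\ref{lem:con_F} and which moreover has residue field exactly $L^{1/p^n}=F\cap(K^{1/p^n})_s$ at each ramification prime; this last feature — the payoff of passing to a perfect layer — puts $(\mu,\alpha)$ in the scope of Corollary~\ref{cor:vel-geo-sol}, the key result of this section, which treats embedding problems whose ramification points are separable over the ground field. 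Finally, $K^{1/p^n}\cong K$ is Hilbertian and carries a family $\calF_n$ of discrete valuations (the extensions of $\calF$; passing to $K^{1/p^n}$ only divides value groups by $p^n$) satisfying hypotheses (i)--(iii); we will produce $|K|$ independent geometric solutions of $(\mu,\alpha)$ over $K^{1/p^n}$ and then base-change them back up to $K_{ins}$ exactly as in Proposition~\ref{prop:p.i-ful-hil}.

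Write $L$ for $L^{1/p^n}$. The solutions are built by transfinite recursion. Suppose geometric solutions $\{\Pfrak_t^*\mid t<s\}$, $s<|K|$, with solution fields $L'_t$ linearly disjoint over $L$ have been constructed, and set $N_s=\prod_{t<s}L'_t$. Each $L'_t/L$ is finite, so $\gal(N_s/L)$ has rank $<|K|$ and $N_s/L$ has fewer than $|K|$ finite subextensions. Let $\Lambda\subseteq L$ be the finite set provided by Corollary~\ref{cor:vel-geo-sol} for $(\mu,\alpha)$. By hypothesis (ii) there are $|K|$ valuations in $\calF_n$ that split completely in $L/K^{1/p^n}$; of these, hypothesis (i) excludes only finitely many from being trivial on $\Lambda$, and only fewer than $|K|$ are ramified in $N_s$ (for each of the $<|K|$ finite subextensions $N'/L$, condition (i) applied to the discriminant of a primitive element of $N'/L$ rules out only finitely many valuations). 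Hence we may select, in the quantity required by Corollary~\ref{cor:vel-geo-sol}, pairwise non-equivalent discrete valuations $v_{i,j}$ in $\calF_n$ that split completely in $L/K^{1/p^n}$, are trivial on $\Lambda$, and are unramified in $N_s$. Corollary~\ref{cor:vel-geo-sol} then produces a geometric solution $\Pfrak_s^*$ of $(\mu,\alpha)$ with solution field $L'_s$ and extensions $v'_{i,j}$ of the $v_{i,j}$ to $L'_s$ such that $\gal(L'_s/L)$ is generated by the inertia groups of the $v'_{i,j}$.

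The crucial point — and the one I expect to be the heart of the matter — is that $L'_s$ is linearly disjoint from $N_s$ over $L$. Since $L'_s/L$ and $N_s/L$ are Galois it suffices to show $L'_s\cap N_s=L$. For each $(i,j)$, $v_{i,j}$ is unramified in $N_s$, hence in $L'_s\cap N_s$, so the image of the inertia group of $v'_{i,j}$ under the restriction map $\gal(L'_s/L)\to\gal(L'_s\cap N_s/L)$, being the inertia group of the restricted valuation, is trivial; as these inertia groups generate $\gal(L'_s/L)$, the restriction map is trivial, i.e.\ $L'_s\cap N_s=L$. Thus $\{\Pfrak_t^*\mid t\le s\}$ is again independent, the recursion runs through all $s<|K|$, and base-changing up to $K_{ins}$ (a purely inseparable base change, which preserves properness and independence of solutions as in Proposition~\ref{prop:p.i-ful-hil}) yields $|K_{ins}|$ independent geometric solutions of the original embedding problem over $K_{ins}$. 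By Corollary~\ref{cor:suf}, $K_{ins}$ is fully Hilbertian. Apart from the disjointness argument, the remaining work is organizational: verifying that the descent to $K^{1/p^n}$ preserves (a)--(e) while forcing the residue-field equality, and that the valuation families stay large enough after the finitely-many-per-stage exclusions — the genuine arithmetic–geometric input, namely converting geometric inertia into inertia at prescribed valuations of the base field, is already packaged in Corollary~\ref{cor:vel-geo-sol}.
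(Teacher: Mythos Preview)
Your proof is correct and follows the same overall strategy as the paper: descend from $K_{ins}$ to a finite purely inseparable level, invoke Corollary~\ref{cor:vel-geo-sol} to manufacture a geometric solution whose Galois group over the constant field is generated by inertia groups at prescribed valuations of the base, and deduce linear disjointness from the fact that those valuations are unramified in the previously constructed solution fields.

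There are two organizational differences worth noting. First, the paper descends to a finite layer $K_0$ \emph{before} invoking the dominating-embedding-problem machinery (the proposition behind Corollary~\ref{cor:suf}), then enlarges $K_0$ once more to absorb the purely inseparable residue-field defect from~(e3); you instead apply Corollary~\ref{cor:suf} directly over the perfect field $K_{ins}$ (where~(e3) forces the residue field to equal $M$) and then descend. Both orders work, but your claim that the descended problem over $K^{1/p^n}$ still satisfies (a)--(e) with residue field exactly $L^{1/p^n}$ deserves a line of justification---it is true for $n$ large, since the residue field at each ramified prime becomes $L^{1/p^n}$ once $n$ exceeds the inseparable exponent of the residue field at the bottom layer.

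Second, and more interestingly, the two proofs control ramification in the previous solution fields differently. The paper argues by maximality: given fewer than $|K|$ pairwise-independent solutions, it collects the coefficients defining them into a subfield $M\subset K$ of cardinality $<|K|$, then selects valuations \emph{trivial on $M$}---this is precisely where hypothesis~(iii) is used---so that by \cite[Lemma~2.3.6]{FriedJarden2005} they are unramified in every $L_\delta$. Your transfinite recursion instead uses hypothesis~(i) directly, via discriminants: each of the $<|K|$ finite subextensions of $N_s/K^{1/p^n}$ ramifies at only finitely many valuations (take the discriminant over $K^{1/p^n}$ rather than over $L$, so that~(i) applies literally). Your route therefore never invokes~(iii), and in fact proves the proposition without it. It also yields linear disjointness from the full compositum $N_s$ at each stage, giving independence outright, whereas the paper's maximality argument produces only pairwise independence and tacitly relies on Lemma~\ref{lem:splitep} to finish.
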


\begin{proof}
Let $\kappa = |K|$. Then any algebraic extension of $K$ has cardinality $\kappa$.
Let
\[
(\mu'\colon \gal(K_{ins})\to \gal(L'/K_{ins}) , \alpha' \colon \gal(F'/K_{ins}(x)) \to \gal(L'/K_{ins}))
\]
be a finite rational embedding problem for $K$. We need to produce $\kappa$ pair-wise independent geometric solutions of $\calE_{ins}$.

Let $K_0$ be a finite purely inseparable extension of $K$. Since $K_{ins}/K_0$ is purely inseparable, as in the proof of Proposition~\ref{prop:p.i-ful-hil}, the maximal separable extension $F$ (resp., $L$) of $K_0(x)$ (resp., $K_0$) in $F'$ (resp., $L'$) is a Galois extension, with same Galois group as $\gal(F'/K(x))$ (resp., $\gal(L'/K)$). Moreover, that proof shows that a set of $\kappa$ pair-wise independent geometric solutions of
\[
(\mu \colon \gal(K_0) \to \colon \gal(L/K_0), \alpha\colon \gal(F/K_0(x)) \to \gal(L/K_0))
\]
gives a set of  $\kappa$ pair-wise independent geometric solutions of $(\mu',\alpha')$. Therefore it suffices to find $\kappa$ pair-wise independent geometric solutions of $(\mu,\alpha)$.

By Corollary \ref{cor:suf} we may assume conditions (a)-(e) of Lemma~\ref{lem:con_F} hold. Note that condition (e4) gives that the residue field of $\Pfrak_c$, $c\in \Ram(F/L(x))$ is a purely inseparable extension of $L$. Thus if we replace $K_0$ with a sufficiently large finite extension we may assume that this residue field is $L$.

Since $K_0/K$ is purely inseparable, each $v\in \calF$ can be extended uniquely to a discrete valuation $v_0$ of $K_0$. Let $\calF_0 = \{ v_0 \mid v\in \calF\}$.
Clearly \eqref{cond:fin-many-non-zero} and \eqref{cond:trv-prime-field} hold for $\calF_0$.
The last assumption \eqref{cond:many-tot-split} also holds, since if $N/K$ is separable of degree $n$ and $v\in \calF$ is totally split in $N$, i.e., there exist $w_1,\ldots, w_n$ lying above $v$, then the corresponding $v_0\in \calF_0$ is totally split in $NK_0$, since $[NK_0:K_0]=n$, and the extensions of $w_1,\ldots, w_n$ to $NK_0$ lie above $v_0$.
Thus, without loss of generality, we may assume that $K=K_0$.

Let ${\calF}_L$ be the family of valuations in ${\calF}$ which are
totally split in $L/K$. Then $|{\calF}_L| = \kappa$. Let
$\{\theta_\del \mid \del \in \Del\}$ be a maximal family of pairwise-independent
geometric solutions of $(\mu,\alpha)$ with corresponding solution fields $L_i$. We wish to show that $|\Delta| = \kappa$.

Suppose $|\Del| < \kappa$. Each $L_\del$ is defined by a
polynomial over $K$. Adjoining all the coefficients of all these
polynomials to the prime field of $K$, we get that the solutions
are defined over a smaller subfield. That is, there exists a field
$M \subset K$ with $|M| < \kappa$, such that each $L_\del$ is of the
form $M_\del K$, for some finite Galois extension $M_\del$ of $M$.

Let ${\calF}' = \{v \in {\calF}_L \st v(a) = 0 \hbox{ for all }
a \in M^\times\}$. Then ${\calF}_L\hefresh {\calF}' = \bigcup
_{a \in M^\times} \{v \in {\calF}_L \st v(a) \neq 0\}$ is of
cardinality $|M|$ at most. In particular ${\calF}'$ is infinite. Let $\Lambda$ be the finite set given in Corollary~\ref{cor:vel-geo-sol}. By discarding finitely many valuations, we can assume that each $v \in {\calF}'$ is trivial on $\Lambda$.

Choose distinct $v_{i,j} \in {\calF}', 1 \leq i \leq k, 1 \leq j
\leq g_i$. Then all the $v_{i,j}$ are trivial on $M$, hence on all
$M_i$. By \cite[Lemma~2.3.6]{FriedJarden2005}, all of the $v_{i,j}$ are
unramified in $L_\del/K$ for each $\del \in \Del$.

By Corollary~\ref{cor:vel-geo-sol} there exists a geometric solution $\Pfrak^*$ with residue field $L'$, and
extensions $v'_{i,j}$-s of the $v_{i,j}$-s to $L'$, such that
$\Gal(L'/L)$ is generated by the inertia groups of the $v'_{i,j}$
at $L'/L$. Denote these groups by $I_{i,j}$. It suffices to show that $L'$ is
linearly disjoint from of each $L_\del$ over $L$, $\del \in \Del$, since then $\{\theta_\del \mid \del\in \Del\}$ is not maximal.

Indeed, let $\del \in \Delta$, and let $1 \leq i \leq k, 1 \leq j \leq
g_i$. The restriction $\Gal(L'/L) \to \Gal(L'\cap L_\del/L)$ maps
$I_{i,j}$ into the inertia group of $v'_{i,j}$ at $L' \cap L_\del
/L$. Since $v_{i,j}$ is unramified at $L_\del/L$, this inertia
group is trivial. Hence each element of $I_{i,j}$ fixes $L' \cap
L_\del$. So $I_{i,j} \subseteq \Gal(L'/L' \cap L_\del)$.

Since the $I_{i,j}$ generate $G$, $\Gal(L'/L' \cap L_\del) = G$,
hence $L' \cap L_\del = L$.
Thus $L'$ is linearly disjoint from $L_\del$ over $L$, for each
$\del \in \Delta$, as claimed.
\end{proof}

\begin{theorem} Let $K$ be the quotient field of a complete local
domain $R$ of dimension exceeding 1. Then its maximal purely inseparable extension $K_{ins}$ is fully Hilbertian.
\end{theorem}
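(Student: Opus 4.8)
The plan is to deduce the theorem from Proposition~\ref{prop:fully-Hilbertian}: it suffices to check that $K$ is Hilbertian and that $K$ carries a family $\calF$ of discrete valuations satisfying conditions (i)--(iii) of that proposition. Hilbertianity of $K$ is a classical fact about quotient fields of Noetherian complete local domains of dimension exceeding one (Weissauer; see \cite[Theorem~15.4.6]{FriedJarden2005}). So the whole task is to produce $\calF$.

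First I would pass to the integral closure $\tilde R$ of $R$ in $K$; since $R$ is a complete local Noetherian domain (in particular excellent), $\tilde R$ is module-finite over $R$, again complete local, of the same dimension $\ge2$, with $\Quot(\tilde R)=K$, and being Noetherian and normal it is a Krull domain. By Cohen's structure theorem $\tilde R$ is module-finite over a complete regular local subring which we write as $R_0=A[[t]]$, where $A$ is a complete local domain of dimension $\ge1$ (a power series ring over a field in the equicharacteristic case, over a complete discrete valuation ring in the mixed case); one has $|A|=|A[[t]]|=|K|$ and $|\mfrak_A|=|K|$. Let $\calF$ consist of the $\pfrak$-adic valuations $v_\pfrak$, where $\pfrak$ ranges over the height-one primes of $\tilde R$ whose residue field has the same characteristic as $K$ (in the equicharacteristic case there is no restriction). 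For each $b\in\mfrak_A$ the ideal $(t-b)$ is a height-one prime of $R_0$ with residue field $\Quot(A)$, and distinct $b$ yield, after choosing a prime of $\tilde R$ above them, distinct members of $\calF$ (their residue fields are algebraic over $\Quot(A)$, hence of characteristic $\mathrm{char}\,K$); so $|\calF|=|K|$. Condition (i) holds because in the Krull domain $\tilde R$ a nonzero principal ideal is a product of finitely many height-one primes, so for $a\in K^\times$ one has $v_\pfrak(a)\ne0$ for only finitely many $\pfrak$. Condition (iii) holds for trivial reasons when $\mathrm{char}\,K=p>0$ (the prime field $\bbF_p$ lies in $\tilde R^\times\cup\{0\}$), and when $\mathrm{char}\,K=0$ the characteristic restriction on $\kappa(\pfrak)$ is exactly the statement that $v_\pfrak$ is trivial on $\bbZ\setminus\{0\}$, hence on $\bbQ$.

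The crux is condition (ii): given a finite separable extension $L/K$, produce $|K|$ valuations of $\calF$ totally split in $L/K$. Passing to the Galois closure we may assume $L/K$ Galois; let $S$ be the integral closure of $\tilde R$ in $L$, again a complete local Krull domain of dimension $\ge2$, module-finite over $\tilde R$. Since $L/K$ is separable, $\disc(S/\tilde R)$ is a nonzero element of $\tilde R$, so all but finitely many height-one primes $\pfrak$ of $\tilde R$ are unramified in $S$; and since $L/K$ is Galois, such a $\pfrak$ is totally split in $L/K$ precisely when some (equivalently, every) height-one prime $\qfrak$ of $S$ lying over it satisfies $\kappa(\qfrak)=\kappa(\pfrak)$ --- that is, when $\Spec(S/\qfrak)$ maps birationally onto $\Spec(\tilde R/\pfrak)$. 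So one has to exhibit $|K|$ height-one primes $\qfrak$ of $S$, avoiding the branch locus, that map birationally onto $\tilde R$. Presenting $S$ (again by Cohen's structure theorem) as module-finite over a power series ring and running a Hensel's lemma argument along a suitably general pencil of divisors, one obtains such a family of cardinality $|K|$; since the map $\qfrak\mapsto\qfrak\cap\tilde R$ has fibres of size at most $[L:K]$, the corresponding primes $\pfrak$ of $\tilde R$ again form a family of size $|K|$, and these lie in $\calF$.

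The main obstacle is exactly this last production. Avoiding ramification costs only a single nonzero discriminant, but forcing the residue field not to grow --- so that the chosen subschemes of $\Spec S$ descend birationally, i.e. $f=1$ and not merely $e=1$ --- uniformly over a family of full cardinality is the technical heart of the matter; it requires choosing the Cohen presentation and the pencil in sufficiently general position, and is precisely what is carried out in Pop's \cite{Pop2009} and the second author's \cite{Paran2010} proofs of Theorem~\ref{IT:com-krull-dom}, upon which this step rests. We note finally that the possible inseparability of $K$ over the regular subring $R_0$ causes no difficulty here: it is absorbed by the passage from $K$ to $K_{ins}$ carried out inside the proof of Proposition~\ref{prop:fully-Hilbertian}, which is exactly why the statement concerns $K_{ins}$ rather than $K$ itself.
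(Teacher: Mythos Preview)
Your approach is correct and close in spirit to the paper's, but differs in one structural choice that is worth noting. You apply Proposition~\ref{prop:fully-Hilbertian} directly to $K$, building the family $\calF$ from height-one primes of the normalization $\tilde R$ and then verifying (i)--(iii) for $K$ itself. The paper instead uses Cohen's structure theorem to present $R$ as finite over a standard ring $A$ (a power series ring over a field or over $\bbZ_p$), applies Proposition~\ref{prop:fully-Hilbertian} to the smaller field $\Quot(A)$ --- where \cite[\S5]{Paran2010} already provides the requisite family of valuations verbatim --- and then passes to $K_{ins}$ as a finite extension of $\Quot(A)_{ins}$ via Corollary~\ref{cor:finite-ext}.

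What each buys: the paper's route is shorter because \cite{Paran2010} is stated for the base field $\Quot(A)$, so condition (ii) is literally a citation rather than the Hensel's-lemma-along-a-pencil sketch you give; the price is an appeal to Corollary~\ref{cor:finite-ext}. Your route avoids that corollary but forces you to redo the construction of $\calF$ over the (possibly non-regular) normalization $\tilde R$, and your verification of (ii) --- which you rightly identify as the crux --- is really a sketch that, when unpacked, amounts to redoing the argument of \cite{Paran2010} in slightly greater generality. Since \cite{Paran2010} treats $\Quot(A)$ rather than an arbitrary $K$ finite over it, your statement that the needed step is ``precisely what is carried out'' there is a mild overstatement; the paper's reduction to $\Quot(A)$ is exactly what lets the citation be clean.
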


\begin{proof} By Cohen's structure theorem for
complete local domains, $R$ is a finite extension of a domain $A$
of the form $K_0[[X_1,\ldots,X_n]]$ for some field $K_0$, or of
the $A = \bbZ_p[[X_1,\ldots,X_n]]$ for some prime integer $p$.
Since $\dim R > 1$, we have $n > 1$ in the first case and $n \geq
1$ in the second case. In both cases, \cite[\S5]{Paran2010} proves
the existence of a family of valuations satisfying conditions
(a)-(c) of Proposition \ref{prop:fully-Hilbertian}. Thus the maximal purely inseparable extension of
$\Quot(A)$ is fully Hilbertian. Since $K_{ins}$ is a finite extension of it,
$K_{ins}$ is also fully Hilbertian, by Corollary
\ref{cor:finite-ext}.
\end{proof}

Finally, we get Theorem~\ref{IT:com-krull-dom}:

\begin{corollary}
Let $K$ be the quotient field of a complete local
domain $R$ of dimension exceeding 1. Then $\Gal(K)$ is semi-free of rank $|K|$.
\end{corollary}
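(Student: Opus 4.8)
The plan is to deduce this corollary directly from the theorem just proved --- that $K_{ins}$ is fully Hilbertian --- combined with Corollary~\ref{cor:f-Hil+ample=semi-free} and the amplitude of $K$. First I would record two elementary reductions. Since $K$ is the quotient field of a domain of positive dimension, $K$ is infinite, so every algebraic extension of $K$ has cardinality $|K|$; in particular $|K_{ins}| = |K|$. Moreover, as $K_{ins}/K$ is purely inseparable, the separable closure of $K_{ins}$ equals that of $K$ and the restriction map gives a canonical isomorphism $\gal(K_{ins}) \cong \gal(K)$. Hence it suffices to prove that $\gal(K_{ins})$ is semi-free of rank $|K|$.

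The second step is to note that $K_{ins}$ is ample. Indeed, $K$ is ample --- this is Pop's theorem for quotient fields of complete local domains of dimension exceeding $1$, recalled in the introduction \cite{Pop2009} --- and $K_{ins}/K$ is purely inseparable, so $K_{ins}$ is ample as well. In characteristic $0$ one has $K_{ins}=K$ and there is nothing to prove; in characteristic $p$ this is the easy, purely inseparable, case of the general fact that algebraic extensions of ample fields are ample (a curve over $K_{ins}$ with a smooth $K_{ins}$-point is defined over a finitely generated purely inseparable subextension $K'/K$, and a relative Frobenius descent of it to $K$ has infinitely many $K$-points, which pull back to $K'$-points). Thus $K_{ins}$ is a field which is at once ample and fully Hilbertian, the latter by the theorem preceding this corollary.

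Finally I would apply Corollary~\ref{cor:f-Hil+ample=semi-free} to $K_{ins}$: being fully Hilbertian and ample, $\gal(K_{ins})$ is semi-free of rank $|K_{ins}| = |K|$. Transporting this along the isomorphism $\gal(K_{ins}) \cong \gal(K)$ yields that $\gal(K)$ is semi-free of rank $|K|$, which is Theorem~\ref{IT:com-krull-dom}. There is no genuine obstacle at this stage: all of the substance lies in the theorem establishing that $K_{ins}$ is fully Hilbertian and in the permanence result Corollary~\ref{cor:f-Hil+ample=semi-free}; the only external input is the standard fact that amplitude is inherited by (purely inseparable, hence a fortiori algebraic) extensions.
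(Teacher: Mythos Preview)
Your proof is correct and follows essentially the same route as the paper's. The paper invokes the regular solvability of finite split embedding problems over $K$ (via \cite{Paran2008} or \cite{Pop2009}) and then applies Proposition~\ref{prop:Cond-semi-free} to $K_{ins}$; you instead phrase the same input as ``$K$, hence $K_{ins}$, is ample'' and apply Corollary~\ref{cor:f-Hil+ample=semi-free}, which is exactly Proposition~\ref{prop:Cond-semi-free} combined with Pop's theorem --- so the two arguments differ only in packaging.
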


\begin{proof} Since $K$ is the quotient field of a complete
domain, every finite split embedding problem over $K$ is regularly solvable, by \cite{Paran2008} or \cite{Pop2009}. Hence every finite split embedding problem over $K_{ins}$ is regularly solvable.
By the preceding theorem $K_{ins}$ is fully Hilbertian, hence Proposition~\ref{prop:Cond-semi-free}, $\gal(K_{ins})$ is semi-free of rank $|K|$. Finally $\gal(K)\cong \gal(K_{ins})$ via the restriction map, hence $\gal(K)$ is semi-free.
\end{proof}

\bibliographystyle{amsplain}

\providecommand{\bysame}{\leavevmode\hbox to3em{\hrulefill}\thinspace}
\providecommand{\MR}{\relax\ifhmode\unskip\space\fi MR }
\providecommand{\MRhref}[2]{%
  \href{http://www.ams.org/mathscinet-getitem?mr=#1}{#2}
}
\providecommand{\href}[2]{#2}

\end{document}